\documentclass{amsart}
%%%%%%%%%%%%%%%%%%%%%%%%%%%%%%%%%%%%%%%%%%%%%%%%%%%%%%%%%%%%%%%%%%%%%%%%%%%%%%%%%%%%%%%%%%%%%%%%%%%%%%%%%%%%%%%%%%%%%%%%%%%%%%%%%%%%%%%%%%%%%%%%%%%%%%%%%%%%%%%%%%%%%%%%%%%%%%%%%%%%%%%%%%%%%%%%%%%%%%%%%%%%%%%%%%%%%%%%%%%%%%%%%%%%%%%%%%%%%%%%%%%%%%%%%%%%
\usepackage{amssymb}
\usepackage{amsfonts}
\usepackage{hyperref}

\setcounter{MaxMatrixCols}{10}
%TCIDATA{OutputFilter=LATEX.DLL}
%TCIDATA{Version=5.50.0.2960}
%TCIDATA{<META NAME="SaveForMode" CONTENT="1">}
%TCIDATA{BibliographyScheme=Manual}
%TCIDATA{Created=Friday, May 30, 2025 10:33:16}
%TCIDATA{LastRevised=Tuesday, July 15, 2025 00:11:13}
%TCIDATA{<META NAME="GraphicsSave" CONTENT="32">}
%TCIDATA{<META NAME="DocumentShell" CONTENT="Articles\SW\AMS Journal Article">}
%TCIDATA{Language=American English}
%TCIDATA{CSTFile=amsartci.cst}

\newtheorem{theorem}{Theorem}
\theoremstyle{plain}

\newtheorem{corollary}{Corollary}

\newtheorem{lemma}{Lemma}

\newtheorem{proposition}{Proposition}
\newtheorem{remark}{Remark}

\numberwithin{equation}{section}

\begin{document}
\title[Extension of a complete monotonicity theorem]{Extension of a complete
monotonicity theorem with applications.}
\author{Zhen-Hang-Yang}
\address{State Grid Zhejiang Electric Power Company Research Institute,
Hangzhou, Zhejiang, China, 3100142}
\email{yzhkm@163.com, yang\_zhenhang@zj.sgcc.com.cn}
\urladdr{https://orcid.org/0000-0002-2719-4728}
\date{June 22, 2025}
\subjclass[2000]{Primary 26A48, 44A10; Secondary 11M35, 33C15, 33B20, 26D15}
\keywords{Completely monotonic function, Laplace transform, Hurwitz zeta
function, alternating Hurwitz zeta function, confluent hypergeometric
function of the second, Mills ratio, inequality}
\thanks{This paper is in final form and no version of it will be submitted
for publication elsewhere.}

\begin{abstract}
Let $F_{p}\left( x\right) =L\left( t^{p}f\left( t\right) \right)
=\int_{0}^{\infty }t^{p}f\left( t\right) e^{-xt}dt$ converge on $\left(
0,\infty \right) $ for $p\in \mathbb{N}_{0}=\mathbb{N}\cup \left\{ 0\right\}
$, where $f\left( t\right) $ is positive on $\left( 0,\infty \right) $. In a
recent paper [Z.-H. Yang, A complete monotonicity theorem related to Fink's
inequality with applications, \emph{J. Math. Anal. Appl.} \textbf{551}
(2025), no. 1, Paper no. 129600], the author proved the sufficient
conditions for the function%
\begin{equation*}
x\mapsto \prod_{j=1}^{n}F_{p_{j}}\left( x\right) -\lambda
_{n}\prod_{j=1}^{n}F_{q_{j}}\left( x\right)
\end{equation*}%
to be completely monotonic on $\left( 0,\infty \right) $ by induction, where
$\boldsymbol{p}_{\left[ n\right] }=\left( p_{1},...,p_{n}\right) $ and $%
\boldsymbol{q}_{\left[ n\right] }=\left( q_{1},...,q_{n}\right) \in \mathbb{N%
}_{0}^{n}$ for $n\geq 2$ satisfy $\boldsymbol{p}_{\left[ n\right] }\prec
\boldsymbol{q}_{\left[ n\right] }$ However, the proof of the inductive step
is wrong. In this paper, we prove the above result also holds for $%
\boldsymbol{p}_{\left[ n\right] },\boldsymbol{q}_{\left[ n\right] }\in
\mathbb{I}^{k}$, where $\mathbb{I}\subseteq \mathbb{R}$ is an interval,
which extends the above result and correct the error in the proof of the
inductive step mentioned above. As applications of the extension of the
known result, four complete monotonicity propositions involving the Hurwitz
zeta function, alternating Hurwitz zeta function, the confluent
hypergeometric function of the second and Mills ratio are established, which
yield corresponding Tur\'{a}n type inequalities for these special functions.
\end{abstract}

\maketitle

%section1

\section{Introduction}

We begin with two concepts. The first one is the completely monotonic (%
\textbf{CM}) function, which states that a function $h$ is said to be CM on
an interval $I$ if $h$ has derivatives of all orders on $I$ and satisfies%
\begin{equation*}
\left( -1\right) ^{n}h^{\left( n\right) }\left( x\right) \geq 0\text{ \ for }%
n\in \mathbb{N}_{0}=\mathbb{N\cup }\left\{ 0\right\} \text{ and }x\in I.
\end{equation*}%
Obviously, if $f\left( x\right) $ and $g\left( x\right) $ are CM on $I$,
then $pf\left( x\right) +qg\left( x\right) $ for $p,q>0$ and $f\left(
x\right) g\left( x\right) $ are also CM on $I$ (see \cite[Theorem 1]%
{Miller-ITSF-12-2001}). It was proved in \cite[p. 161, Theorem 12b]%
{Widder-LT-1946} that the function $F(x)$ is CM on $\left( 0,\infty \right) $
if and only if
\begin{equation*}
F\left( x\right) =\int_{0}^{\infty }e^{-xt}d\mu \left( t\right) ,
\end{equation*}%
where $\mu \left( t\right) $\ is nondecreasing and the integral converges
for $0<x<\infty $, which, as usually, is called Bernstein theorem. Now if
let $d\mu \left( t\right) =f\left( t\right) dt$, then Bernstein theorem has
an equivalent statement: the function $F(x)$ is CM on $\left( 0,\infty
\right) $ if and only if
\begin{equation*}
F\left( x\right) =\int_{0}^{\infty }f\left( t\right) e^{-xt}dt,
\end{equation*}%
where $f\left( t\right) $\ is nonnegative on $\left( 0,\infty \right) $ and
the integral converges for $0<x<\infty $.

The second one is the majorization of vectors (see \cite[p. 8, Definition A.1%
]{Marshall-I-TMA-1979}). Let $\boldsymbol{x}_{\left[ n\right] }=\left(
x_{1},...,x_{n}\right) $\ and\ $\boldsymbol{y}_{\left[ n\right] }=\left(
y_{1},...,y_{n}\right) \in \mathbb{R}^{n}$. The $n$-tuple $\boldsymbol{x}_{%
\left[ n\right] }$ is said to be strictly majorized by another $n$-tuple $%
\boldsymbol{y}_{\left[ n\right] }$, denoted by $\boldsymbol{x}_{\left[ n%
\right] }\succ \boldsymbol{y}_{\left[ n\right] }$ or $\boldsymbol{y}_{\left[
n\right] }\prec \boldsymbol{x}_{\left[ n\right] }$, if $\boldsymbol{x}_{%
\left[ n\right] }\neq \boldsymbol{y}_{\left[ n\right] }$,
\begin{eqnarray*}
x_{1} &\geq &x_{2}\geq \cdot \cdot \cdot \geq x_{n}\text{, \ \ \ }y_{1}\geq
y_{2}\geq \cdot \cdot \cdot \geq y_{n}\text{,} \\
\sum_{j=1}^{n}x_{j} &\geq &\sum_{j=1}^{n}y_{j}\text{ for }j=1,...,n-1\text{
\ and \ }\sum_{j=1}^{n}x_{j}=\sum_{j=1}^{n}y_{j}.
\end{eqnarray*}

Very recently, inspired by Fink's inequality \cite{Fink-JMAA-90-1982}, Yang
\cite[Theorem 1.1]{Yang-JMAA-551-2025} considered the complete monotonicity
of the function%
\begin{equation}
\mathcal{D}_{\boldsymbol{p}_{\left[ n\right] },\boldsymbol{q}_{\left[ n%
\right] }}\left( x;\lambda _{n}\right) =\mathcal{D}_{\boldsymbol{p}_{\left[ n%
\right] },\boldsymbol{q}_{\left[ n\right] }}^{\left[ F\right] }\left(
x;\lambda _{n}\right) =\prod_{j=1}^{n}F_{p_{j}}\left( x\right) -\lambda
_{n}\prod_{j=1}^{n}F_{q_{j}}\left( x\right)  \label{Dpn,qn}
\end{equation}%
on $\left( 0,\infty \right) $, where%
\begin{equation}
F_{n}\left( x\right) =\left( -1\right) ^{n}F^{\left( n\right) }\left(
x\right) =\int_{0}^{\infty }t^{n}f\left( t\right) e^{-xt}dt\text{ for }n\in
\mathbb{N}_{0}\text{.}  \label{Fn}
\end{equation}%
For convenience, we records this result as follows (with $k=n$).

\begin{theorem}
\label{T0}Suppose that

(1) there exist constants $M>0$ and $c\geq 0$ such that $0<f\left( t\right)
\leq Me^{ct}$ for all $t>0$;

(2) the functions $F_{n}$ for $n\in \mathbb{N}_{0}$ defined by (\ref{Fn})
converge for all $x>0$;

(3) the $n$-tuples $\boldsymbol{p}_{\left[ n\right] }=\left(
p_{1},...,p_{n}\right) $ and $\boldsymbol{q}_{\left[ n\right] }=\left(
q_{1},...,q_{n}\right) \in \mathbb{N}_{0}^{n}$ for $n\geq 2$ satisfy $%
\boldsymbol{p}_{\left[ n\right] }\prec \boldsymbol{q}_{\left[ n\right] }$.

If there is real $\theta <1+\min \left\{ p_{n},q_{n}\right\} $ such that $%
f_{\theta }\left( t\right) =t^{\theta }f\left( t\right) $ is log-concave
(-convex) on $\left( 0,\infty \right) $, then the function $x\mapsto +\left(
-\right) \mathcal{D}_{\boldsymbol{p}_{\left[ n\right] },\boldsymbol{q}_{%
\left[ n\right] }}\left( x;\lambda _{n}^{\left[ \theta \right] }\right) $
define by (\ref{Dpn,qn}) is CM on $\left( 0,\infty \right) $, where%
\begin{equation}
\lambda _{n}^{\left[ \theta \right] }\equiv \lambda _{\boldsymbol{p}_{\left[
n\right] },\boldsymbol{q}_{\left[ n\right] }}\equiv \prod_{j=1}^{n}\frac{%
\Gamma \left( p_{j}-\theta +1\right) }{\Gamma \left( q_{j}-\theta +1\right) }%
.  \label{ln-thi}
\end{equation}
\end{theorem}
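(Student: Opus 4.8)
\section{Proof proposal}

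The plan is to realize every factor as a Laplace transform and pass to convolutions, so that complete monotonicity becomes a pointwise positivity statement via Bernstein's theorem. Writing $g_{p}(t)=t^{p}f(t)\ge 0$, hypotheses (1)--(2) justify (through Fubini's theorem) the product-into-convolution formula $\prod_{j=1}^{n}F_{p_{j}}(x)=\int_{0}^{\infty}\bigl(g_{p_{1}}\ast\cdots\ast g_{p_{n}}\bigr)(t)\,e^{-xt}\,dt$, and likewise for the $q_{j}$. Hence $\mathcal{D}_{\boldsymbol{p}_{[n]},\boldsymbol{q}_{[n]}}(x;\lambda_{n})$ is the Laplace transform of $\Phi(t):=\bigl(g_{p_{1}}\ast\cdots\ast g_{p_{n}}\bigr)(t)-\lambda_{n}^{[\theta]}\bigl(g_{q_{1}}\ast\cdots\ast g_{q_{n}}\bigr)(t)$. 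By the Bernstein theorem recalled in the introduction, it then suffices to prove $\Phi\ge 0$ (log-concave case) respectively $\Phi\le 0$ (log-convex case) for every $t>0$, and the admissible sign of $\mathcal{D}$ is correspondingly $+$ respectively $-$.

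First I would rewrite $g_{p}(t)=t^{p-\theta}f_{\theta}(t)$ and set $a_{j}=p_{j}-\theta$, $b_{j}=q_{j}-\theta$; the restriction $\theta<1+\min\{p_{n},q_{n}\}$ forces $a_{j},b_{j}>-1$, so the simplex integrals $I(\boldsymbol{a};t):=\int_{t_{1}+\cdots+t_{n}=t}\prod_{j=1}^{n}t_{j}^{a_{j}}f_{\theta}(t_{j})$ converge at the endpoints. The $n$-fold convolution equals $I(\boldsymbol{a};t)$, and the Dirichlet integral shows that when $f_{\theta}$ is constant one has $I(\boldsymbol{a};t)/I(\boldsymbol{b};t)=\prod_{j}\Gamma(a_{j}+1)/\Gamma(b_{j}+1)=\lambda_{n}^{[\theta]}$ (using $\sum_{j}a_{j}=\sum_{j}b_{j}$); this both pins down the constant (\ref{ln-thi}) and exhibits the log-linear case as the equality case. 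Consequently $\Phi\ge 0$ is equivalent to $J(\boldsymbol{a};t)\ge J(\boldsymbol{b};t)$ for the normalized functional $J(\boldsymbol{a};t):=I(\boldsymbol{a};t)/\prod_{j}\Gamma(a_{j}+1)$. Since $\boldsymbol{p}_{[n]}\prec\boldsymbol{q}_{[n]}$ (equivalently $\boldsymbol{a}\prec\boldsymbol{b}$) and $J(\cdot;t)$ is symmetric, everything reduces to showing that $J(\cdot;t)$ is Schur-concave (resp.\ Schur-convex) in its exponent vector.

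To establish this Schur monotonicity I would invoke the Hardy--Littlewood--P\'{o}lya description of majorization by successive two-coordinate (Robin--Hood) transfers, which lets me treat one pair of exponents at a time; this is precisely the location where the earlier inductive argument can be repaired. Associativity of convolution factors the integral as $I(\boldsymbol{a};t)=\int_{0}^{t}C_{a_{i},a_{j}}(s)\,H(t-s)\,ds$, where $H\ge 0$ is the convolution of the remaining $n-2$ factors and is \emph{independent} of $(a_{i},a_{j})$, while $C_{a_{i},a_{j}}(s)=\int_{0}^{s}u^{a_{i}}(s-u)^{a_{j}}f_{\theta}(u)f_{\theta}(s-u)\,du$. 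Because $H\ge 0$, the step collapses to a pointwise two-variable claim: for each fixed $s>0$, the function $K(a_{i},a_{j};s):=C_{a_{i},a_{j}}(s)/[\Gamma(a_{i}+1)\Gamma(a_{j}+1)]$ is Schur-concave (resp.\ Schur-convex) in $(a_{i},a_{j})$. Chaining the finitely many transfers then delivers the result for all $n$.

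The heart of the matter, and the step I expect to be the main obstacle, is this two-variable inequality. After scaling $u=s\sigma$ one has $C_{a_{i},a_{j}}(s)=s^{a_{i}+a_{j}+1}\int_{0}^{1}\sigma^{a_{i}}(1-\sigma)^{a_{j}}W(\sigma)\,d\sigma$ with $W(\sigma)=f_{\theta}(s\sigma)f_{\theta}(s(1-\sigma))$, and dividing by the Gamma factors turns $K$, at fixed sum $a_{i}+a_{j}$, into a positive multiple of $\mathbb{E}_{\beta}[W]$ for a $\mathrm{Beta}(a_{i}+1,a_{j}+1)$ law. Differentiating in the spread variable while holding the sum fixed, the derivative of $K$ equals a positive multiple of $\operatorname{Cov}_{\beta}\!\bigl(W,T\bigr)$, where $T(\sigma)=\log\frac{\sigma}{1-\sigma}$. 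I would then fold the integral about $\sigma=\tfrac12$, using $W(1-\sigma)=W(\sigma)$, $T(1-\sigma)=-T(\sigma)$ and the likelihood-ratio identity $\rho(\sigma)/\rho(1-\sigma)=(\sigma/(1-\sigma))^{a_{i}-a_{j}}$ for the Beta density $\rho$, to recast this as a covariance on $(0,\tfrac12)$ against the folded probability measure, whose second factor becomes $\tau(\sigma)\tanh\!\bigl(\tfrac12(a_{i}-a_{j})\tau(\sigma)\bigr)$ with $\tau(\sigma)=\log\frac{1-\sigma}{\sigma}\ge 0$. On $(0,\tfrac12)$ this factor is monotone in $\sigma$, whereas log-concavity (resp.\ log-convexity) of $f_{\theta}$ makes the symmetric function $W$ monotone in the opposite (resp.\ the same) direction; Chebyshev's correlation inequality then forces $\operatorname{Cov}_{\beta}(W,T)\le 0$ (resp.\ $\ge 0$), which is exactly the sign needed. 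The delicate points are the Fubini and convergence justifications near the endpoints and the careful monotonicity bookkeeping in this folded covariance, where the genuine work resides.
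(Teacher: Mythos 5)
Your proposal is correct in substance, but it follows a genuinely different route from the paper's. The paper argues by induction on $n$ entirely at the level of CM functions of $x$: after normalizing $\Phi_{p}=F_{p}/\Gamma(p-\theta+1)$, it takes the two-factor case (from \cite[Lemma 2.2 and Section 3.1]{Yang-JMAA-551-2025}) as the base and repairs the faulty inductive step through the algebraic splitting (\ref{Dp,qn+1-Dp,qn,2}), namely $\Delta_{\boldsymbol{p}_{[n+1]},\boldsymbol{q}_{[n+1]}}=\Phi_{p_{1}}\Delta_{\boldsymbol{p}_{[n]}^{\ast},\boldsymbol{q}_{[n]}^{\ast}}+\bigl(\prod_{j\neq k,k+1}\Phi_{q_{j}}\bigr)\Delta_{\boldsymbol{p}_{[2]}^{\prime},\boldsymbol{q}_{[2]}^{\prime}}$, where Lemma \ref{L-p,qn*,p,q2'} together with a stepwise case analysis on the position of $p_{1}$ among the $q_{j}$ guarantees $\boldsymbol{p}_{[n]}^{\ast}\prec\boldsymbol{q}_{[n]}^{\ast}$ and $\boldsymbol{p}_{[2]}^{\prime}\prec\boldsymbol{q}_{[2]}^{\prime}$. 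You instead descend to the Laplace kernels: Bernstein reduces CM to pointwise positivity of a difference of $n$-fold convolutions, majorization is resolved into finitely many Hardy--Littlewood--P\'{o}lya two-coordinate transfers, and each transfer is absorbed by convolving a two-variable inequality with the nonnegative remainder $H$. Your two-variable inequality is the analytic heart, and it checks out: at fixed sum the Gamma-normalization turns $K$ into $s^{a_{i}+a_{j}+1}\,\mathbb{E}_{\beta}[W]/\Gamma(a_{i}+a_{j}+2)$, the spread-derivative is $\mathrm{Cov}_{\beta}(W,T)$ by the exponential-family identity, and folding about $\sigma=\tfrac12$ (where $\rho(\sigma)\pm\rho(1-\sigma)$ is proportional to $\cosh(d\tau)$ and $-\sinh(d\tau)$ with $2d=a_{i}-a_{j}$) does produce the factor $\tau\tanh\bigl(\tfrac12(a_{i}-a_{j})\tau\bigr)$, which is decreasing on $(0,\tfrac12)$, so Chebyshev's correlation inequality gives exactly the required sign; this step is, morally, the kernel-level content of the paper's $n=2$ base case. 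What your route buys: a pointwise inequality for the kernels (strictly stronger than CM of the transform), no delicate tuple bookkeeping of the kind that caused the error in \cite{Yang-JMAA-551-2025}, and the real-exponent extension (Theorem \ref{T1}) for free --- indeed your proof necessarily treats real exponents, since T-transforms of integer tuples are generically non-integer. What the paper's route buys: it never leaves the class of CM functions, uses the $n=2$ case as a black box, and needs no integrability analysis beyond the stated hypotheses.

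Two points need patching, both routine. First, hypothesis (2) of the theorem asserts convergence of $F_{n}$ only for integer $n$, while your transfer chain passes through non-integer exponents; you should note that every intermediate exponent $a^{\prime}$ lies in $[b_{n},b_{1}]$ (majorization squeezes the minimum up and the maximum down along the chain), whence $u^{a^{\prime}}\leq u^{b_{n}}+u^{b_{1}}$ for all $u>0$, so the simplex integrals, the Fubini--Tonelli applications, and the differentiation under the integral sign in the spread variable are all dominated by convergent integrals with the original exponents, and $a^{\prime}\geq b_{n}>-1$ keeps the Gamma factors finite. Second, your monotonicity of $W$ on $(0,\tfrac12)$ is obtained by differentiating $\log f_{\theta}$, but log-concavity does not grant differentiability; replace this by the increment form of concavity: for $\sigma<\sigma^{\prime}\leq\tfrac12$, the increments of $\log f_{\theta}$ over the equal-length intervals $[s\sigma,s\sigma^{\prime}]$ and $[s(1-\sigma^{\prime}),s(1-\sigma)]$ compare in the right order, giving $W(\sigma)\leq W(\sigma^{\prime})$ directly (reversed in the log-convex case). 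With these repairs your argument is complete and delivers the theorem with the sign $+$ in the log-concave case and $-$ in the log-convex case, exactly as stated.
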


Now let us verify carefully the proof of \cite[Theorem 1.1]%
{Yang-JMAA-551-2025} and remark as follows.

\begin{remark}
(i) The proof of base case, i.e., \cite[Proof of Theorem 1.1 for $k=2$.]%
{Yang-JMAA-551-2025}, is correct.

(ii) \cite[Lemma 2.2]{Yang-JMAA-551-2025} also holds for $\boldsymbol{p}_{%
\left[ 2\right] },\boldsymbol{q}_{\left[ 2\right] }\in \mathbb{R}^{2}$, and
therefore, \cite[Proof of Theorem 1.1 for $k=2$.]{Yang-JMAA-551-2025} is
also valid for $\boldsymbol{p}_{\left[ 2\right] },\boldsymbol{q}_{\left[ 2%
\right] }\in \mathbb{I}^{2}$, where $\mathbb{I\subseteq R}$ is an interval,
provide $F_{p_{j}}\left( x\right) $ and $F_{q_{j}}\left( x\right) $ converge
on $\left( 0,\infty \right) $, where%
\begin{equation}
F_{p}\equiv F_{p}\left( x\right) =\int_{0}^{\infty }t^{p}f\left( t\right)
e^{-xt}dt\text{.}  \label{Fp}
\end{equation}

(iii) Unfortunately, the proof of inductive step, i.e., \cite[Proof of
Theorem 1.1 for all $k\in \mathbb{N}$ with $k\geq 2$.]{Yang-JMAA-551-2025}
is wrong. The error occurred on page 7 of \cite{Yang-JMAA-551-2025}, where
the last inequality in line 2 from the bottom can not be deduced by the
inequalities listed in line 4 from the bottom; in other words, the last
inequality in line 2 from the bottom does not hold. Thus, $\boldsymbol{p}_{%
\left[ k\right] }^{\ast }\nprec \boldsymbol{q}_{\left[ k\right] }$, and
hence, the inductive step failed.
\end{remark}

The purpose of this paper is to offer an extension of \cite[Theorem 1.1]%
{Yang-JMAA-551-2025} (extend $p_{j},q_{j}\in \mathbb{N}_{0}$ to $%
p_{j},q_{j}\in \mathbb{I}$, where $\mathbb{I\subseteq R}$ is an interval)
and to give a correct proof of it. The extension of \cite[Theorem 1.1]%
{Yang-JMAA-551-2025} can be stated as follows.

\begin{theorem}
\label{T1}Suppose that

(1) there is an interval $\mathbb{I\subseteq R}$ such that the function $%
F_{p}\left( x\right) $ defined by (\ref{Fp}) converges on $\left( 0,\infty
\right) $ for all $p\in \mathbb{I}$;

(2) there exist constants $M>0$ and $c\geq 0$ such that $0<t^{p}f\left(
t\right) \leq Me^{ct}$ for all $t>0$ and $p\in \mathbb{I}$;

(3) the $n$-tuples $\boldsymbol{p}_{\left[ n\right] }=\left(
p_{1},...,p_{n}\right) $ and $\boldsymbol{q}_{\left[ n\right] }=\left(
q_{1},...,q_{n}\right) \in \mathbb{I}^{n}$ for $n\geq 2$ satisfy $%
\boldsymbol{p}_{\left[ n\right] }\prec \boldsymbol{q}_{\left[ n\right] }$.

If there is a real $\theta <1+\min \left\{ p_{n},q_{n}\right\} $ such that $%
f_{\theta }\left( t\right) =t^{\theta }f\left( t\right) $ is log-concave
(-convex) on $\left( 0,\infty \right) $, then the function $x\mapsto +\left(
-\right) \mathcal{D}_{\boldsymbol{p}_{\left[ n\right] },\boldsymbol{q}_{%
\left[ n\right] }}\left( x;\lambda _{n}^{\left[ \theta \right] }\right) $
define by (\ref{Dpn,qn}) is CM on $\left( 0,\infty \right) $, where $\lambda
_{n}^{\left[ \theta \right] }$ is defined by (\ref{ln-thi}).
\end{theorem}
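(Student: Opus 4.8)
The plan is to deduce the statement for general $n$ from the base case $n=2$ --- which, by part (ii) of the Remark, already holds for real indices in $\mathbb{I}$ --- rather than to induct on $n$ by peeling off one coordinate, the manoeuvre that fails in \cite{Yang-JMAA-551-2025}. Since $\boldsymbol{q}_{[n]}$ majorizes $\boldsymbol{p}_{[n]}$, I would invoke the classical fact that one tuple is majorized by another precisely when it can be reached by finitely many $T$-transforms (see \cite{Marshall-I-TMA-1979}), giving a chain
\begin{equation*}
\boldsymbol{q}_{[n]}=\boldsymbol{r}^{(0)},\ \boldsymbol{r}^{(1)},\ \dots,\ \boldsymbol{r}^{(m)}=\boldsymbol{p}_{[n]},
\end{equation*}
in which $\boldsymbol{r}^{(\ell+1)}$ is obtained from $\boldsymbol{r}^{(\ell)}$ by a single $T$-transform: the two agree in all but two coordinates $i_{\ell},j_{\ell}$, the pair of $\boldsymbol{r}^{(\ell+1)}$ is a convex average of that of $\boldsymbol{r}^{(\ell)}$, and $\boldsymbol{r}^{(\ell+1)}\prec\boldsymbol{r}^{(\ell)}$. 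Because $\mathbb{I}$ is an interval and a $T$-transform replaces two entries by numbers lying between them, every entry of every $\boldsymbol{r}^{(\ell)}$ remains in $\mathbb{I}$; so by hypotheses (1)--(2) and Bernstein's theorem each $F_{r^{(\ell)}_{k}}$ is CM on $(0,\infty)$.

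I would then set $\Pi_{\ell}=\prod_{k=1}^{n}\Gamma(r^{(\ell)}_{k}-\theta+1)$. Since $r^{(\ell+1)}_{k}=r^{(\ell)}_{k}$ off the two active coordinates, the ratio $\Pi_{\ell+1}/\Pi_{\ell}$ collapses to $\Gamma(r^{(\ell+1)}_{i_{\ell}}-\theta+1)\Gamma(r^{(\ell+1)}_{j_{\ell}}-\theta+1)/[\Gamma(r^{(\ell)}_{i_{\ell}}-\theta+1)\Gamma(r^{(\ell)}_{j_{\ell}}-\theta+1)]$, which is exactly the constant $\lambda_{2}^{[\theta]}$ the base case attaches to the pair $(r^{(\ell+1)}_{i_{\ell}},r^{(\ell+1)}_{j_{\ell}})\prec(r^{(\ell)}_{i_{\ell}},r^{(\ell)}_{j_{\ell}})$. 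Hence in the log-concave case the two-variable expression
\begin{equation*}
\Delta_{\ell}(x):=F_{r^{(\ell+1)}_{i_{\ell}}}(x)\,F_{r^{(\ell+1)}_{j_{\ell}}}(x)-\frac{\Pi_{\ell+1}}{\Pi_{\ell}}\,F_{r^{(\ell)}_{i_{\ell}}}(x)\,F_{r^{(\ell)}_{j_{\ell}}}(x)
\end{equation*}
is CM, and multiplying $\Delta_{\ell}$ by the CM function $\prod_{k\neq i_{\ell},j_{\ell}}F_{r^{(\ell)}_{k}}$ would show that $\prod_{k}F_{r^{(\ell+1)}_{k}}-(\Pi_{\ell+1}/\Pi_{\ell})\prod_{k}F_{r^{(\ell)}_{k}}$ is CM. Normalizing by writing $H_{\ell}=\Pi_{\ell}^{-1}\prod_{k}F_{r^{(\ell)}_{k}}$ and dividing by $\Pi_{\ell+1}>0$ converts this into the clean statement that $H_{\ell+1}-H_{\ell}$ is CM for every $\ell$. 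Summing these telescoping differences (a finite sum of CM functions is again CM) would give that $H_{m}-H_{0}$ is CM, and multiplying by $\Pi_{m}$ --- noting $\lambda_{n}^{[\theta]}=\Pi_{m}/\Pi_{0}$ --- recovers the complete monotonicity of $\mathcal{D}_{\boldsymbol{p}_{[n]},\boldsymbol{q}_{[n]}}(x;\lambda_{n}^{[\theta]})$. The log-convex case should follow verbatim with every sign reversed, yielding the CM of $-\mathcal{D}_{\boldsymbol{p}_{[n]},\boldsymbol{q}_{[n]}}$.

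The one delicate point --- and precisely where \cite{Yang-JMAA-551-2025} goes astray --- is to be sure each elementary step genuinely falls under the base case, i.e. that the admissibility condition $\theta<1+\min\{\cdot,\cdot\}$ on the two active coordinates holds at every step; this is where the interval structure of $\mathbb{I}$ is decisive. Since $\boldsymbol{q}_{[n]}$ majorizes $\boldsymbol{p}_{[n]}$ one has $q_{n}\leq p_{n}$, so $\min\{p_{n},q_{n}\}=q_{n}$; and because a $T$-transform never lowers the least entry of a tuple, every entry occurring in the chain is $\geq q_{n}$. Consequently the smaller of the two active indices at each step is $\geq q_{n}$, so the single standing hypothesis $\theta<1+q_{n}$ licenses the base case at all $m$ steps simultaneously. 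I expect this uniform control of $\theta$, rather than any new analytic input, to be the main obstacle; once it is secured, the closure properties of CM functions quoted in the Introduction together with the normalization by $\Pi_{\ell}$ assemble the local two-variable inequalities into the global one, replacing the invalid coordinate-splitting induction.
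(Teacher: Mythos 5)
Your argument is correct, and it reaches Theorem \ref{T1} by a genuinely different route than the paper. The paper keeps the induction on $n$: its Lemma \ref{L-p,qn*,p,q2'} constructs from $\boldsymbol{p}_{[n+1]}\prec \boldsymbol{q}_{[n+1]}$ a reduced pair $\boldsymbol{p}_{[n]}^{\ast }\prec \boldsymbol{q}_{[n]}^{\ast }$ (replacing $(q_{k},q_{k+1})$ by $q_{k}+q_{k+1}-p_{1}$ and cancelling the coordinate $p_{1}$ against the $p$-side) together with a two-tuple pair $\boldsymbol{p}_{[2]}^{\prime }\prec \boldsymbol{q}_{[2]}^{\prime }$, feeds these into the decomposition (\ref{Dp,qn+1-Dp,qn,2}), namely $\Delta _{\boldsymbol{p}_{[n+1]},\boldsymbol{q}_{[n+1]}}=\Phi _{p_{1}}\Delta _{\boldsymbol{p}_{[n]}^{\ast },\boldsymbol{q}_{[n]}^{\ast }}+\bigl(\prod_{j\neq k,k+1}\Phi _{q_{j}}\bigr)\Delta _{\boldsymbol{p}_{[2]}^{\prime },\boldsymbol{q}_{[2]}^{\prime }}$, and closes the inductive step via Lemma \ref{Crit} after an $n$-step sweep over the position $k$ of $p_{1}$ among the $q_{j}$'s. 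You dispense with induction on $n$ altogether: you invoke the Marshall--Olkin characterization of majorization by finite chains of $T$-transforms and telescope the normalized products $H_{\ell }=\prod_{k}\Phi _{r_{k}^{(\ell )}}$, each increment being the $n=2$ base case multiplied by a CM spectator product. In substance the two proofs are cognate --- the paper's inductive step performs one $T$-transform per level of recursion, since replacing $(q_{k},q_{k+1})$ by $(p_{1},q_{k}+q_{k+1}-p_{1})$ with $q_{k+1}\leq p_{1}\leq q_{k}$ is exactly such a pairwise averaging --- but your route trades the hand-proved Lemma \ref{L-p,qn*,p,q2'} and the stepwise case analysis for a citation of the classical lemma. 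That buys brevity and makes two things transparent: why the interval hypothesis on $\mathbb{I}$ is exactly what is needed (convex combinations of entries must remain admissible, which is unavailable over $\mathbb{N}_{0}$ and is precisely what the extension to intervals supplies), and why the single hypothesis $\theta <1+q_{n}$ suffices uniformly (your observation that a $T$-transform never lowers the minimum entry; the paper secures the same point implicitly because $p_{n+1}$ and $q_{n+1}$ survive into the starred tuples, and since $\boldsymbol{p}_{[n]}\prec \boldsymbol{q}_{[n]}$ forces $q_{n}\leq p_{n}$, your $\min \{p_{n},q_{n}\}=q_{n}$ is the same threshold). What the paper's longer route buys is self-containedness and a reusable tool: Lemma \ref{L-p,qn*,p,q2'} is applied again in the Conclusions to reprove the Hardy--Littlewood--P\'{o}lya inequality, whereas your argument runs in the opposite direction, importing the $T$-transform machinery that underlies that classical theorem. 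If you write your version up, make two small points explicit: discard trivial $T$-transforms ($\lambda \in \{0,1\}$, where the increment vanishes or is a mere swap, harmless since the products are symmetric), and note that $\theta <1+q_{n}$ also guarantees $\Gamma (r-\theta +1)>0$ for every entry $r$ occurring in the chain, so all the normalizing constants $\Pi _{\ell }$ are positive and the divisions are legitimate.
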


\begin{remark}
\label{R-T1}From the proofs of \cite[Lemma 2.2]{Yang-JMAA-551-2025} and \cite%
[Section 3.1]{Yang-JMAA-551-2025} we see that $x\mapsto -\mathcal{D}_{%
\boldsymbol{p}_{\left[ 2\right] },\boldsymbol{q}_{\left[ 2\right] }}\left(
x;1\right) $ is CM on $\left( 0,\infty \right) $. Using the same method as
the proof of Theorem \ref{T1}, we can prove that $x\mapsto -\mathcal{D}_{%
\boldsymbol{p}_{\left[ n\right] },\boldsymbol{q}_{\left[ n\right] }}\left(
x;1\right) $ is CM on $\left( 0,\infty \right) $. On the other hand, since $%
f_{\theta }\left( t\right) =t^{\theta }f\left( t\right) $ is log-convex on $%
\left( 0,\infty \right) $ as $\theta \rightarrow -\infty $ and by \cite[Eq.
(6.1.46)]{Abramowitz-HMFFGMT-1972},%
\begin{equation*}
\lambda _{n}^{\left[ \theta \right] }=\prod_{j=1}^{n}\frac{\Gamma \left(
p_{j}-\theta +1\right) }{\Gamma \left( q_{j}-\theta +1\right) }\sim
\prod_{j=1}^{n}\frac{\left( -\theta \right) ^{p_{j}+1}}{\left( -\theta
\right) ^{q_{j}+1}}=1\text{ as }\theta \rightarrow -\infty .
\end{equation*}%
Hence, Theorem \ref{T1} is also true if $\theta \rightarrow -\infty $, and
it implies Fink's inequality \cite[Theorem 1]{Fink-JMAA-90-1982}\ in this
limiting case.
\end{remark}

The rest of this paper is organized as follows. Two useful lemmas are given
in Section 2. The proof of Theorem \ref{T1} is arranged in Section 3. In
Section 4, we give four applications of Theorem \ref{T1} in the Hurwitz zeta
function, alternating Hurwitz zeta function, confluent hypergeometric
functions of the second kind, and Mills ratio, which extend \cite[%
Propositions 4.1, 5.1 and 6.1]{Yang-JMAA-551-2025}. In the last section, we
emphasize the significance of Lemma \ref{L-p,qn*,p,q2'}.

\section{Lemmas}

In order to complete the inductive step of proving Theorem \ref{T1} by
induction, we need the following lemma.

\begin{lemma}
\label{L-p,qn*,p,q2'}Let the $n+1$-tuples $\boldsymbol{p}_{\left[ n+1\right]
}=\left( p_{1},...,p_{n+1}\right) $ and $\boldsymbol{q}_{\left[ n+1\right]
}=\left( q_{1},...,q_{n+1}\right) \in \mathbb{R}^{n+1}$ for $n\geq 2$
satisfy $\boldsymbol{p}_{\left[ n+1\right] }\prec \boldsymbol{q}_{\left[ n+1%
\right] }$. Let%
\begin{equation*}
\boldsymbol{p}_{\left[ n\right] }^{\ast }=\left( p_{1}^{\ast
},...,p_{n}^{\ast }\right) \text{ \ and \ }\boldsymbol{q}_{[n]}^{\ast
}=(q_{1}^{\ast },...,q_{n}^{\ast }),
\end{equation*}%
where%
\begin{equation}
p_{j}^{\ast }=p_{j+1}\text{ \ and \ }q_{j}^{\ast }=\left\{
\begin{array}{ll}
q_{j} & \text{for }j=1,...,k-1\text{,} \\
q_{k}+q_{k+1}-p_{1} & \text{for }j=k\text{,} \\
q_{j+1} & \text{for }j=k+1,...,n\text{,}%
\end{array}%
\right.  \label{pj*,qj*}
\end{equation}%
and let%
\begin{equation}
\boldsymbol{p}_{\left[ 2\right] }^{\prime }=\left( p_{1}^{\prime
},p_{2}^{\prime }\right) =\left\{
\begin{array}{cc}
\left( p_{1},q_{k}^{\ast }\right) & \text{if }p_{1}\geq q_{k}^{\ast }, \\
\left( q_{k}^{\ast },p_{1}\right) & \text{if }p_{1}<q_{k}^{\ast }.%
\end{array}%
\right. \text{ \ and \ }\boldsymbol{q}_{\left[ 2\right] }^{\prime }=\left(
q_{1}^{\prime },q_{2}^{\prime }\right) =\left( q_{k},q_{k+1}\right) ,
\label{p2',q2'}
\end{equation}%
Suppose that%
\begin{equation}
q_{1}\geq q_{2}\geq \cdot \cdot \cdot \geq q_{k}\geq p_{1}\geq p_{2}\geq
\cdots \geq p_{n+1}\geq q_{n+1}.  \label{(pi,qi)-rk}
\end{equation}

If $q_{k\mathbf{+1}}\leq p_{1}$, then $\boldsymbol{p}_{\left[ n\right]
}^{\ast }\prec \boldsymbol{q}_{\left[ n\right] }^{\ast }$ and $\boldsymbol{p}%
_{\left[ 2\right] }^{\prime }\prec \boldsymbol{q}_{\left[ 2\right] }^{\prime
}$.
\end{lemma}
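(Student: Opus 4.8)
The plan is to reduce everything to the standard partial-sum characterization of majorization. Writing $P_m=\sum_{j=1}^m p_j$ and $Q_m=\sum_{j=1}^m q_j$ for the partial sums of the original tuples, the hypothesis $\boldsymbol{p}_{[n+1]}\prec\boldsymbol{q}_{[n+1]}$ means both are sorted decreasingly, $Q_m\ge P_m$ for $1\le m\le n$, and $Q_{n+1}=P_{n+1}$. To prove each conclusion I must check three things: that the two derived $n$-tuples (resp.\ $2$-tuples) are each sorted in decreasing order, that the partial sums of the $\boldsymbol{q}$-side dominate those of the $\boldsymbol{p}$-side, and that the totals agree.

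For the first conclusion $\boldsymbol{p}_{[n]}^{\ast}\prec\boldsymbol{q}_{[n]}^{\ast}$, note that $\boldsymbol{p}_{[n]}^{\ast}=(p_2,\dots,p_{n+1})$ is automatically decreasing by (\ref{(pi,qi)-rk}). The only nontrivial order checks for $\boldsymbol{q}_{[n]}^{\ast}$ concern the modified middle entry $q_k^{\ast}=q_k+q_{k+1}-p_1$: I would show $q_{k-1}\ge q_k^{\ast}$ from $q_{k+1}\le p_1$ (which gives $q_k^{\ast}\le q_k\le q_{k-1}$) and $q_k^{\ast}\ge q_{k+2}$ from $q_k\ge p_1$ (which gives $q_k^{\ast}\ge q_{k+1}\ge q_{k+2}$); this is exactly where the order relation $q_k\ge p_1$ and the hypothesis $q_{k+1}\le p_1$ enter. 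Then I compute the partial sums explicitly: $P_m^{\ast}=P_{m+1}-p_1$ for all $m$, while $Q_m^{\ast}=Q_m$ for $m\le k-1$ and $Q_m^{\ast}=Q_{m+1}-p_1$ for $m\ge k$ (the telescoping at $j=k$ collapses $q_k^{\ast}$ into $Q_{k+1}-Q_{k-1}-p_1$). The required dominance then splits into two cases: for $m\ge k$ it is just $Q_{m+1}\ge P_{m+1}$, immediate from the original majorization, and at $m=n$ it becomes the equality $Q_{n+1}=P_{n+1}$; for $m\le k-1$ I need $Q_m+p_1\ge P_{m+1}$, which follows from $Q_m\ge P_m$ together with $p_1\ge p_{m+1}$.

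For the second conclusion $\boldsymbol{p}_{[2]}^{\prime}\prec\boldsymbol{q}_{[2]}^{\prime}$, the tuple $\boldsymbol{q}_{[2]}^{\prime}=(q_k,q_{k+1})$ is decreasing and $\boldsymbol{p}_{[2]}^{\prime}$ is sorted by its very definition, so only the $2$-tuple conditions remain. The totals match by construction, since $p_1+q_k^{\ast}=q_k+q_{k+1}$. The single dominance inequality is $\max\{p_1,q_k^{\ast}\}\le q_k$, which I obtain from $p_1\le q_k$ and $q_k^{\ast}\le q_k$, the latter again using $q_{k+1}\le p_1$.

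I expect the main obstacle to be purely bookkeeping: getting the piecewise formula for $Q_m^{\ast}$ correct across the index shift at position $k$, and keeping the case split $m\le k-1$ versus $m\ge k$ aligned with where the two hypotheses $q_k\ge p_1$ and $q_{k+1}\le p_1$ are actually needed. If the symbol $\prec$ is read in the strict sense of the paper's definition, one should also record that the degenerate case $p_1=q_k$ forces $\boldsymbol{p}_{[2]}^{\prime}=\boldsymbol{q}_{[2]}^{\prime}$; this is harmless for the intended inductive application, since the corresponding difference then vanishes identically and is trivially CM.
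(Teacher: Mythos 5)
Your proposal is correct and follows essentially the same route as the paper: the identical partial-sum verification with the case split $m\le k-1$ versus $m\ge k$ (where your formulas $P_m^{\ast}=P_{m+1}-p_1$, $Q_m^{\ast}=Q_m$ or $Q_{m+1}-p_1$ are exactly the paper's telescoping computations), the bound $Q_m+p_1\ge P_{m+1}$ from $Q_m\ge P_m$ and $p_1\ge p_{m+1}$, and the two-tuple check $\max\{p_1,q_k^{\ast}\}\le q_k$ all matching the paper's proof of Lemma \ref{L-p,qn*,p,q2'}. Your closing remark on the degenerate case $p_1=q_k$, where $\boldsymbol{p}_{\left[ 2\right] }^{\prime }=\boldsymbol{q}_{\left[ 2\right] }^{\prime }$ so that strict majorization in the paper's sense fails but the corresponding difference vanishes and is trivially CM, is a legitimate subtlety that the paper silently glosses over.
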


\begin{proof}
(i) We first show that $\boldsymbol{p}_{\left[ n\right] }^{\ast }\prec
\boldsymbol{q}_{\left[ n\right] }^{\ast }$. Since $p_{j}\geq p_{j+1}$, we
have $p_{j-1}^{\ast }\geq p_{j}^{\ast }$ for $j=2,...,n+1$. Similarly, we
have%
\begin{eqnarray*}
q_{j-1}^{\ast } &\geq &q_{j}^{\ast }\text{ for }j=2,...,k-1,k+2,...n, \\
q_{k-1}^{\ast }-q_{k}^{\ast } &=&q_{k-1}-\left( q_{k}+q_{k+1}-p_{1}\right)
=\left( q_{k-1}-q_{k}\right) +\left( p_{1}-q_{k+1}\right) \geq 0, \\
q_{k}^{\ast }-q_{k+1}^{\ast } &=&\left( q_{k}+q_{k+1}-p_{1}\right)
-q_{k+2}=\left( q_{k}-p_{1}\right) +\left( q_{k+1}-q_{k+2}\right) \geq 0%
\text{.}
\end{eqnarray*}%
Next we check that $\sum_{j=1}^{\ell }q_{j}^{\ast }\geq \sum_{j=1}^{\ell
}p_{j}^{\ast }$ for $\ell =1,...,n-1$ and $\sum_{j=1}^{n}q_{j}^{\ast }\geq
\sum_{j=1}^{n}p_{j}^{\ast }$. Since $\boldsymbol{p}_{\left[ n+1\right]
}\prec \boldsymbol{q}_{\left[ n+1\right] }$, we have%
\begin{eqnarray*}
\sum_{j=1}^{\ell }q_{j}^{\ast }-\sum_{j=1}^{\ell }p_{j}^{\ast } &=&\left(
\sum_{j=1}^{\ell }q_{j}-\sum_{j=1}^{\ell }p_{j}\right) +\left( p_{1}-p_{\ell
+1}\right) \geq 0\text{ \ for }\ell =1,...,k-1\text{,} \\
\sum_{j=1}^{k}q_{j}^{\ast }-\sum_{j=1}^{k}p_{j}^{\ast }
&=&\sum_{j=1}^{k-1}q_{j}+q_{k}+q_{k+1}-p_{1}-\sum_{j=1}^{k}p_{j+1}=%
\sum_{j=1}^{k+1}q_{j}-\sum_{j=1}^{k+1}p_{j}\geq 0\text{,} \\
\sum_{j=1}^{\ell }q_{j}^{\ast }-\sum_{j=1}^{\ell }p_{j}^{\ast }
&=&\sum_{j=1}^{k-1}q_{j}+q_{k}+q_{k+1}-p_{1}+\sum_{j=k+1}^{\ell
}q_{j+1}-\sum_{j=2}^{\ell +1}p_{j} \\
&=&\sum_{j=1}^{\ell +1}q_{j}-\sum_{j=1}^{\ell +1}p_{j}\geq 0\text{ for }\ell
=k+1,...,n\text{, }
\end{eqnarray*}%
where the last inequality becomes equality for $\ell =n$.

(ii) We now verify that $\boldsymbol{p}_{\left[ 2\right] }^{\prime }\prec
\boldsymbol{q}_{\left[ 2\right] }^{\prime }$. Clearly, $p_{1}^{\prime
}-p_{2}^{\prime }\geq 0$ and $q_{1}^{\prime }-q_{2}^{\prime
}=q_{k}-q_{k+1}\geq 0$. Also,%
\begin{eqnarray*}
q_{1}^{\prime }-p_{1}^{\prime } &=&\left\{
\begin{array}{ll}
q_{k}-p_{1}>0 & \text{if }p_{1}\geq q_{k}^{\ast }, \\
q_{k}-q_{k}^{\ast }=q_{k}-\left( q_{k}+q_{k+1}-p_{1}\right) \geq 0 & \text{%
if }p_{1}<q_{k}^{\ast },%
\end{array}%
\right. \\
q_{1}^{\prime }+q_{2}^{\prime } &=&q_{k}+q_{k+1}=p_{1}^{\prime
}+p_{2}^{\prime }.
\end{eqnarray*}%
This proves $\boldsymbol{p}_{\left[ 2\right] }^{\prime }\prec \boldsymbol{q}%
_{\left[ 2\right] }^{\prime }$, thereby completing the proof.
\end{proof}

The following lemma is an obvious fact, which will simplify the proof of
Theorem \ref{T1}.

\begin{lemma}
\label{Crit}Let $\mathbb{S}$ denote the set of all functions with the same
property $P$ and let $\oplus _{j}$ for $j=1,2,3$ be three operators. Let the
set $\mathbb{S}$ is closed for the three operations, that is, for any two
functions $A=A\left( x\right) ,B=B\left( x\right) \in \mathbb{S}$ satisfy $%
A\oplus _{j}B\in \mathbb{S}$ for $j=1,2,3$. Assume that the function $y_{%
\boldsymbol{p}_{\left[ n+1\right] },\boldsymbol{q}_{\left[ n+1\right]
}}\left( x\right) $ for $\boldsymbol{p}_{\left[ n+1\right] }\prec
\boldsymbol{q}_{\left[ n+1\right] }$ can be decomposed into the form of
\begin{equation*}
y_{\boldsymbol{p}_{\left[ n+1\right] },\boldsymbol{q}_{\left[ n+1\right]
}}\left( x\right) =\left( A\left( x\right) \oplus _{1}y_{\boldsymbol{p}_{%
\left[ 2\right] }^{\prime },\boldsymbol{q}_{\left[ 2\right] }^{\prime
}}\left( x\right) \right) \oplus _{2}\left( y_{\boldsymbol{p}_{\left[ n%
\right] }^{\ast },\boldsymbol{q}_{\left[ n\right] }^{\ast }}\left( x\right)
\oplus _{3}B\left( x\right) \right) ,
\end{equation*}%
where $\boldsymbol{p}_{\left[ 2\right] }^{\prime }\prec \boldsymbol{q}_{%
\left[ 2\right] }^{\prime }$ and $\boldsymbol{p}_{\left[ n\right] }^{\ast
}\prec \boldsymbol{q}_{\left[ n\right] }^{\ast }$. If $y_{\boldsymbol{p}_{%
\left[ 2\right] },\boldsymbol{q}_{\left[ 2\right] }}\in \mathbb{S}$ for $%
\boldsymbol{p}_{\left[ 2\right] }\prec \boldsymbol{q}_{\left[ 2\right] }$,
then $y_{\boldsymbol{p}_{\left[ n+1\right] },\boldsymbol{q}_{\left[ n+1%
\right] }}\in \mathbb{S}$ for $\boldsymbol{p}_{\left[ n+1\right] }\prec
\boldsymbol{q}_{\left[ n+1\right] }$.
\end{lemma}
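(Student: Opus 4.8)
The plan is to prove the claim by induction on the common length $n$ of the tuples, with the displayed decomposition supplying the reduction at each step. The base case is $n=2$: here the conclusion is $y_{\boldsymbol{p}_{[3]},\boldsymbol{q}_{[3]}}\in\mathbb{S}$, and in the decomposition both nontrivial ingredients $y_{\boldsymbol{p}_{[2]}^{\prime},\boldsymbol{q}_{[2]}^{\prime}}$ and $y_{\boldsymbol{p}_{[2]}^{\ast},\boldsymbol{q}_{[2]}^{\ast}}$ have length two, so both lie in $\mathbb{S}$ directly by the standing hypothesis $y_{\boldsymbol{p}_{[2]},\boldsymbol{q}_{[2]}}\in\mathbb{S}$ for $\boldsymbol{p}_{[2]}\prec\boldsymbol{q}_{[2]}$. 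The inductive hypothesis is that $y_{\boldsymbol{p}_{[n]},\boldsymbol{q}_{[n]}}\in\mathbb{S}$ holds for every length-$n$ pair with $\boldsymbol{p}_{[n]}\prec\boldsymbol{q}_{[n]}$, and the goal of the step is to promote this to length $n+1$.

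First I would read off from the decomposition that $y_{\boldsymbol{p}_{[n+1]},\boldsymbol{q}_{[n+1]}}$ is assembled, through the three operators $\oplus_1,\oplus_2,\oplus_3$, from exactly four ingredients: the auxiliary functions $A$ and $B$, the length-$2$ term $y_{\boldsymbol{p}_{[2]}^{\prime},\boldsymbol{q}_{[2]}^{\prime}}$, and the length-$n$ term $y_{\boldsymbol{p}_{[n]}^{\ast},\boldsymbol{q}_{[n]}^{\ast}}$. Two of these are already known to be members of $\mathbb{S}$: since $\boldsymbol{p}_{[2]}^{\prime}\prec\boldsymbol{q}_{[2]}^{\prime}$, the standing hypothesis gives $y_{\boldsymbol{p}_{[2]}^{\prime},\boldsymbol{q}_{[2]}^{\prime}}\in\mathbb{S}$; and since $\boldsymbol{p}_{[n]}^{\ast}\prec\boldsymbol{q}_{[n]}^{\ast}$, the inductive hypothesis applied to this particular length-$n$ pair gives $y_{\boldsymbol{p}_{[n]}^{\ast},\boldsymbol{q}_{[n]}^{\ast}}\in\mathbb{S}$. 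Together with $A,B\in\mathbb{S}$, this places all four ingredients inside $\mathbb{S}$.

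The remainder is a threefold application of the closure assumption, carried out from the inside of the bracketing outward. Closure under $\oplus_1$ gives $A\oplus_1 y_{\boldsymbol{p}_{[2]}^{\prime},\boldsymbol{q}_{[2]}^{\prime}}\in\mathbb{S}$; closure under $\oplus_3$ gives $y_{\boldsymbol{p}_{[n]}^{\ast},\boldsymbol{q}_{[n]}^{\ast}}\oplus_3 B\in\mathbb{S}$; and closure under $\oplus_2$ combines these two intermediate members to yield $y_{\boldsymbol{p}_{[n+1]},\boldsymbol{q}_{[n+1]}}\in\mathbb{S}$, completing the inductive step and hence the induction.

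I do not expect any analytic difficulty here, since the argument is purely formal; the one point that genuinely needs care is the bookkeeping of hypotheses. One must confirm that $A$ and $B$ really belong to $\mathbb{S}$ (in the intended application of Theorem \ref{T1} they are products of functions already known to be CM, hence in $\mathbb{S}$), and that the two majorizations $\boldsymbol{p}_{[2]}^{\prime}\prec\boldsymbol{q}_{[2]}^{\prime}$ and $\boldsymbol{p}_{[n]}^{\ast}\prec\boldsymbol{q}_{[n]}^{\ast}$ attached to the decomposition align exactly with the indices at which the base case and the inductive hypothesis are available. These two majorizations are precisely what Lemma \ref{L-p,qn*,p,q2'} secures in the concrete setting, so the substantive work has already been done upstream; once the matching of hypotheses is verified, the closure argument closes the induction with no further computation.
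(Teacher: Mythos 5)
Your proof is correct and coincides with what the paper intends: the paper states Lemma \ref{Crit} \emph{without proof}, calling it ``an obvious fact,'' and the argument it implicitly relies on when invoking the lemma in the proof of Theorem \ref{T1} (base case for the length-$2$ piece, inductive hypothesis for the length-$n$ piece, then three applications of closure under $\oplus_{1},\oplus_{3},\oplus_{2}$ working from the inside of the bracketing outward) is precisely the induction you wrote out. Your closing caveat is also well placed: the lemma's wording never explicitly hypothesizes $A,B\in \mathbb{S}$ (nor that the decomposition is available at every intermediate length, which your induction needs), and both gaps are harmless only because in the application $A$ and $B$ are products of functions already known to be CM and Lemma \ref{L-p,qn*,p,q2'} furnishes the decomposition at each step.
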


\section{Proof of Theorem \protect\ref{T1}}

\begin{proof}
Let%
\begin{equation*}
\mathbb{S=}\left\{ \text{All CM functions on }\left( 0,\infty \right)
\right\} \text{.}
\end{equation*}%
(i) We first prove that $\mathcal{D}_{\boldsymbol{p}_{\left[ n\right] },%
\boldsymbol{q}_{\left[ n\right] }}\in \mathbb{S}$ if there is real $\theta
<1+\min \left\{ p_{n},q_{n}\right\} $ such that $f_{\theta }\left( t\right)
=t^{\theta }f\left( t\right) $ is log-concave on $\left( 0,\infty \right) $.
For convenience, we introduce the function $\Phi _{p}$ defined by
\begin{equation*}
\Phi _{p}\left( x\right) =\frac{F_{p}\left( x\right) }{\Gamma \left(
p-\theta +1\right) }.
\end{equation*}%
Then%
\begin{equation*}
\frac{\mathcal{D}_{\boldsymbol{p}_{\left[ n\right] },\boldsymbol{q}_{\left[ n%
\right] }}\left( x;\lambda _{n}^{\left[ \theta \right] }\right) }{%
\prod_{j=1}^{n}\Gamma \left( p_{j}-\theta +1\right) }=\prod_{j=1}^{n}\Phi
_{p_{j}}\left( x\right) -\prod_{j=1}^{n}\Phi _{q_{j}}\left( x\right)
=:\Delta _{\boldsymbol{p}_{\left[ n\right] },\boldsymbol{q}_{\left[ n\right]
}}\left( x\right) .
\end{equation*}%
Thus, it suffices to prove that $\Delta _{\boldsymbol{p}_{\left[ n\right] },%
\boldsymbol{q}_{\left[ n\right] }}\in \mathbb{S}$ if $\boldsymbol{p}%
_{[n]}\prec \boldsymbol{q}_{[n]}$.

When $n=2$, the proof of \cite[Lemma 2.2]{Yang-JMAA-551-2025} and \cite[%
Section 3.1]{Yang-JMAA-551-2025} are both valid if replacing $\boldsymbol{p}%
_{\left[ 2\right] },\boldsymbol{q}_{\left[ 2\right] }\in \mathbb{N}_{0}^{2}$
by $\boldsymbol{p}_{\left[ 2\right] },\boldsymbol{q}_{\left[ 2\right] }\in
\mathbb{I}^{2}$.

Suppose that for certain $n\geq 2$, $\Delta _{\boldsymbol{p}_{\left[ n\right]
},\boldsymbol{q}_{\left[ n\right] }}\in \mathbb{S}$ if $\boldsymbol{p}%
_{[n]}\prec \boldsymbol{q}_{[n]}$. We prove that $\Delta _{\boldsymbol{p}_{%
\left[ n+1\right] },\boldsymbol{q}_{\left[ n+1\right] }}\in \mathbb{S}$ if $%
\boldsymbol{p}_{[n+1]}\prec \boldsymbol{q}_{[n+1]}$ stepwise.

Let $p_{j}^{\ast }$ and $q_{j}^{\ast }$ are defined by (\ref{pj*,qj*}),
while $\boldsymbol{p}_{\left[ 2\right] }^{\prime }$ and $\boldsymbol{q}_{%
\left[ 2\right] }^{\prime }$ are defined by (\ref{p2',q2'}). Then $\Delta _{%
\boldsymbol{p}_{\left[ n+1\right] },\boldsymbol{q}_{\left[ n+1\right] }}$
can be written in the form of%
\begin{eqnarray}
\Delta _{\boldsymbol{p}_{\left[ n+1\right] },\boldsymbol{q}_{\left[ n+1%
\right] }} &=&\left( \prod_{j=1}^{n+1}\Phi _{p_{j}}-\Phi
_{p_{1}}\prod_{j=1}^{n}\Phi _{q_{j}^{\ast }}\right) +\left( \Phi
_{p_{1}}\prod_{j=1}^{n}\Phi _{q_{j}^{\ast }}-\prod_{j=1}^{n+1}\Phi
_{q_{j}}\right)  \notag \\
&=&\Phi _{p_{1}}\Delta _{\boldsymbol{p}_{\left[ n\right] }^{\ast },%
\boldsymbol{q}_{\left[ n\right] }^{\ast }}+\left( \prod_{j=k+2}^{n+1}\Phi
_{q_{j}}\right) \left( \prod_{j=1}^{k-1}\Phi _{q_{j}}\right) \Delta _{%
\boldsymbol{p}_{\left[ 2\right] }^{\prime },\boldsymbol{q}_{\left[ 2\right]
}^{\prime }},  \label{Dp,qn+1-Dp,qn,2}
\end{eqnarray}%
where%
\begin{equation*}
\Delta _{\boldsymbol{p}_{\left[ n\right] }^{\ast },\boldsymbol{q}_{\left[ n%
\right] }^{\ast }}=\prod_{j=1}^{n}\Phi _{p_{j}^{\ast }}-\prod_{j=1}^{n}\Phi
_{q_{j}^{\ast }}\text{ \ and \ }\Delta _{\boldsymbol{p}_{\left[ 2\right]
}^{\prime },\boldsymbol{q}_{\left[ 2\right] }^{\prime }}=\Phi _{p_{1}}\Phi
_{q_{k}^{\ast }}-\Phi _{q_{k}}\Phi _{q_{k+1}}.
\end{equation*}%
Clearly, the operators in the right hand side of (\ref{Dp,qn+1-Dp,qn,2}) are
just ordinary multiplication and addition operations, which satisfies $%
A\times B,A+B\in \mathbb{S}$ if $A,B\in \mathbb{S}$.

We now begin with the inequalities%
\begin{equation}
q_{1}\geq p_{1}\geq p_{2}\geq \cdots \geq p_{n+1}\geq q_{n+1}.
\label{(pi,qi)-r1}
\end{equation}%
which follow from the known condition that $\boldsymbol{p}_{\left[ n+1\right]
}\prec \boldsymbol{q}_{\left[ n+1\right] }$.

\textbf{Step 1}: Let $p_{j}^{\ast }=p_{j+1}$ and%
\begin{equation*}
q_{1}^{\ast }=q_{1}+q_{2}-p_{1}\text{ \ and \ }q_{j}^{\ast }=q_{j+1}\text{
for }j=2,...,n+1\text{.}
\end{equation*}

Case 1.1: $q_{2}\leq p_{1}$. Taking $k=1$ in Lemma \ref{L-p,qn*,p,q2'}
reveals that $\boldsymbol{p}_{\left[ n\right] }^{\ast }\prec \boldsymbol{q}_{%
\left[ n\right] }^{\ast }$ and $\boldsymbol{p}_{\left[ 2\right] }^{\prime
}\prec \boldsymbol{q}_{\left[ 2\right] }^{\prime }$. By the base case and
induction assumption, $\Delta _{\boldsymbol{p}_{\left[ 2\right] }^{\prime },%
\boldsymbol{q}_{\left[ 2\right] }^{\prime }},\Delta _{\boldsymbol{p}_{\left[
n\right] }^{\ast },\boldsymbol{q}_{\left[ n\right] }^{\ast }}\in \mathbb{S}$%
, which, by Lemma \ref{Crit}, yields $\Delta _{\boldsymbol{p}_{\left[ n+1%
\right] },\boldsymbol{q}_{\left[ n+1\right] }}\in \mathbb{S}$.

Case 1.2: $q_{2}>p_{1}$. This in combination with (\ref{(pi,qi)-r1}) leads to%
\begin{equation}
q_{1}\geq q_{2}>p_{1}\geq p_{2}\geq \cdots \geq p_{n+1}\geq q_{n+1}.
\label{(pi,qi)-r2}
\end{equation}%
To prove $\Delta _{\boldsymbol{p}_{\left[ n+1\right] },\boldsymbol{q}_{\left[
n+1\right] }}\in \mathbb{S}$ in Case 1.2, it suffices to prove $\Delta _{%
\boldsymbol{p}_{\left[ n+1\right] },\boldsymbol{q}_{\left[ n+1\right] }}\in
\mathbb{S}$ under the condition (\ref{(pi,qi)-r2}), which can be proved by
Step 2.

\textbf{Step 2}: Let $p_{j}^{\ast }=p_{j+1}$ and%
\begin{equation*}
q_{1}^{\ast }=q_{1}\text{, \ }q_{2}^{\ast }=q_{2}+q_{3}-p_{1}\text{ \ and \ }%
q_{j}^{\ast }=q_{j+1}\text{ for }j=3,...,n+1\text{.}
\end{equation*}

Case 2.1: $q_{3}\leq p_{1}$. Taking $k=2$ in Lemma \ref{L-p,qn*,p,q2'}
reveals that $\boldsymbol{p}_{\left[ n\right] }^{\ast }\prec \boldsymbol{q}_{%
\left[ n\right] }^{\ast }$ and $\boldsymbol{p}_{\left[ 2\right] }^{\prime
}\prec \boldsymbol{q}_{\left[ 2\right] }^{\prime }$. By the base case and
induction assumption, $\Delta _{\boldsymbol{p}_{\left[ 2\right] }^{\prime },%
\boldsymbol{q}_{\left[ 2\right] }^{\prime }},\Delta _{\boldsymbol{p}_{\left[
n\right] }^{\ast },\boldsymbol{q}_{\left[ n\right] }^{\ast }}\in \mathbb{S}$%
, which, by Lemma \ref{Crit}, yields $\Delta _{\boldsymbol{p}_{\left[ n+1%
\right] },\boldsymbol{q}_{\left[ n+1\right] }}\in \mathbb{S}$.

Case 2.2: $q_{3}>p_{1}$. This in combination with (\ref{(pi,qi)-r2}) leads to%
\begin{equation}
q_{1}\geq q_{2}\geq q_{3}>p_{1}\geq p_{2}\geq \cdots \geq p_{n+1}\geq
q_{n+1}.  \label{(pi,qi)-r3}
\end{equation}%
To prove $\Delta _{\boldsymbol{p}_{\left[ n+1\right] },\boldsymbol{q}_{\left[
n+1\right] }}\in \mathbb{S}$ in Case 2.2, it suffices to prove that $\Delta
_{\boldsymbol{p}_{\left[ n+1\right] },\boldsymbol{q}_{\left[ n+1\right]
}}\in \mathbb{S}$ under the condition (\ref{(pi,qi)-r3}), which can be
proved by Step 3.

......

Repeating such step $n-1$ times, it remains to prove that $\Delta _{%
\boldsymbol{p}_{\left[ n+1\right] },\boldsymbol{q}_{\left[ n+1\right] }}\in
\mathbb{S}$ under the condition that%
\begin{equation}
q_{1}\geq q_{2}\geq \cdots \geq q_{n-1}\geq q_{n}>p_{1}\geq p_{2}\geq \cdots
\geq p_{n+1}\geq q_{n+1}.  \label{(pi,qi)-rn}
\end{equation}

\textbf{Step n}: Let $p_{j}^{\ast }=p_{j+1}$ and%
\begin{equation*}
q_{j}^{\ast }=q_{j}\text{ for }j=1,,,,,n-1\text{ \ and \ }q_{n}^{\ast
}=q_{n}+q_{n+1}-p_{1}\text{.}
\end{equation*}

Taking $k=n$ in Lemma \ref{L-p,qn*,p,q2'} reveals that $\boldsymbol{p}_{%
\left[ n\right] }^{\ast }\prec \boldsymbol{q}_{\left[ n\right] }^{\ast }$
and $\boldsymbol{p}_{\left[ 2\right] }^{\prime }\prec \boldsymbol{q}_{\left[
2\right] }^{\prime }$. By the base case and induction assumption, $\Delta _{%
\boldsymbol{p}_{\left[ 2\right] }^{\prime },\boldsymbol{q}_{\left[ 2\right]
}^{\prime }},\Delta _{\boldsymbol{p}_{\left[ n\right] }^{\ast },\boldsymbol{q%
}_{\left[ n\right] }^{\ast }}\in \mathbb{S}$, which, by Lemma \ref{Crit},
yields $\Delta _{\boldsymbol{p}_{\left[ n+1\right] },\boldsymbol{q}_{\left[
n+1\right] }}\in \mathbb{S}$.

Taking into account the above $n$ times steps, we deduce that $\Delta _{%
\boldsymbol{p}_{\left[ n+1\right] },\boldsymbol{q}_{\left[ n+1\right] }}\in
\mathbb{S}$ when $\boldsymbol{p}_{\left[ n+1\right] }\prec \boldsymbol{q}_{%
\left[ n+1\right] }$. By induction, the function $\Delta _{\boldsymbol{p}_{%
\left[ n\right] },\boldsymbol{q}_{\left[ n\right] }}\in \mathbb{S}$ when $%
\boldsymbol{p}_{\left[ n\right] }\prec \boldsymbol{q}_{\left[ n\right] }$
for all $n\geq 2$.

(ii) If there is real $\theta <1+\min \left\{ p_{n},q_{n}\right\} $ such
that $f_{\theta }\left( t\right) =t^{\theta }f\left( t\right) $ is
log-convex on $\left( 0,\infty \right) $, we can prove that $-\Delta _{%
\boldsymbol{p}_{\left[ n\right] },\boldsymbol{q}_{\left[ n\right] }}\in
\mathbb{S}$ when $\boldsymbol{p}_{\left[ n\right] }\prec \boldsymbol{q}_{%
\left[ n\right] }$ for all $n\geq 2$ in the same way as part (i).

This completes the proof.
\end{proof}

\section{Applications to special functions}

As applications of \cite[Theorem 1.1]{Yang-JMAA-551-2025}, the author
obtained three complete monotonicity results involving psi function,
Nielsen's beta function and Tricomi function; that are, \cite[Propositions
4.1, 5.1, 6.1]{Yang-JMAA-551-2025}. On applying our extended Theorem \ref{T1}%
, these results can also be extended. In additional, an application to the
Mills ratio is presented in this section.

\subsection{Application to the Hurwitz zeta function: Extension of
\protect\cite[Proposition 4.1]{Yang-JMAA-551-2025}}

Consider the function%
\begin{equation}
\psi _{p}\left( x\right) =\int_{0}^{\infty }\frac{t^{p}}{1-e^{-t}}e^{-xt}dt,
\label{Psip1}
\end{equation}%
which is the polygamma function if $p\in \mathbb{N}$; that is, $\psi
_{p}\left( x\right) =\left( -1\right) ^{p-1}\psi ^{\left( p\right) }\left(
x\right) $ if $p\in \mathbb{N}$. For this reason, $\psi _{p}$ can also be
regarded as \textquotedblleft extended polygamma function\textquotedblright\
by F. Qi.

Now, series expansion gives%
\begin{equation}
\begin{array}{cc}
\psi _{p}\left( x\right) & =\int_{0}^{\infty }t^{p}\left(
\sum\limits_{k=0}^{\infty }e^{-kt}\right)
e^{-xt}dt=\sum\limits_{k=0}^{\infty }\left( \int_{0}^{\infty
}t^{p}e^{-\left( k+x\right) t}dt\right) \\
& =\sum\limits_{k=0}^{\infty }\frac{\Gamma \left( p+1\right) }{\left(
k+x\right) ^{p+1}}=\Gamma \left( p+1\right) \zeta \left( p+1,x\right) ,%
\end{array}
\label{Psip-1-Hz}
\end{equation}%
where%
\begin{equation}
\zeta \left( p,x\right) =\sum_{k=0}^{\infty }\frac{1}{\left( k+x\right) ^{p}}
\label{Hzf}
\end{equation}%
is the Hurwitz zeta function (or generalized zeta function) \cite[p. 25]%
{Magnus-FTSFMP-S-1966}. Since $\zeta \left( p,x\right) $ converges in $x$ on
$\left( 0,\infty \right) $ for $p>1$, so is $\psi _{p}\left( x\right) $ for $%
p>0$.

The Hurwitz zeta function $\zeta \left( p,x\right) $ has an asymptotic
expansion \cite[p. 25]{Magnus-FTSFMP-S-1966}%
\begin{equation}
\zeta \left( p,x\right) \sim \frac{1}{\left( p-1\right) x^{p-1}}+\frac{1}{%
2x^{p}}+\sum_{n=1}^{\infty }\frac{B_{2n}}{\left( 2n\right) !}\frac{\Gamma
\left( 2n+p-1\right) }{\Gamma \left( p\right) }\frac{1}{x^{2n-1+p}}\text{ as
}x\rightarrow \infty ,  \label{Hzf-ae}
\end{equation}%
where $B_{2n}$ for $n\in \mathbb{N}$ are the Bernoulli numbers defined by%
\begin{equation*}
\frac{t}{e^{t}-1}=\sum_{n=0}^{\infty }B_{n}\frac{t^{n}}{n!}\text{ \ \ }%
\left\vert t\right\vert <2\pi .
\end{equation*}%
By (\ref{Hzf}) and (\ref{Hzf-ae}), we have%
\begin{equation}
\lim_{x\rightarrow 0^{+}}\left( x^{p}\zeta \left( p,x\right) \right) =1\text{
\ and \ }\lim_{x\rightarrow \infty }\left( x^{p-1}\zeta \left( p,x\right)
\right) =\frac{1}{p-1}.  \label{zp-af}
\end{equation}

As shown in \cite[p. 9]{Yang-JMAA-551-2025}, the function $f_{\theta }\left(
t\right) =t^{\theta }f\left( t\right) =t^{\theta +1}/\left( 1-e^{-t}\right) $
satisfies that%
\begin{equation*}
\left[ \ln f_{\theta }\left( t\right) \right] ^{\prime \prime }=-\frac{1}{%
t^{2}}\left( \theta +1-\frac{t^{2}e^{t}}{\left( e^{t}-1\right) ^{2}}\right)
\leq \left( \geq \right) 0
\end{equation*}%
for $t>0$ if and only if $\theta \geq 0$ ($\theta \leq -1$). Thus, by
Theorem \ref{T1}, \cite[Proposition 4.1]{Yang-JMAA-551-2025} can be extended
as follows.

\begin{proposition}
\label{P-gTd-cm-ep}Let the $n$-tuples $\boldsymbol{p}_{\left[ n\right]
}=\left( p_{1},...,p_{n}\right) $ and $\boldsymbol{q}_{\left[ n\right]
}=\left( q_{1},...,q_{n}\right) \in \left( 0,\infty \right) ^{n}$ for $n\geq
2$ satisfy $\boldsymbol{p}_{\left[ n\right] }\prec \boldsymbol{q}_{\left[ n%
\right] }$. Then the functions%
\begin{eqnarray*}
x &\mapsto &\prod_{j=1}^{n}\psi _{p_{j}}\left( x\right) -\lambda _{n}^{\left[
1\right] }\prod_{j=1}^{n}\psi _{q_{j}}\left( x\right) , \\
x &\mapsto &-\prod_{j=1}^{n}\psi _{p_{j}}\left( x\right) +\lambda _{n}^{
\left[ 0\right] }\prod_{j=1}^{n}\psi _{q_{j}}\left( x\right)
\end{eqnarray*}%
are CM on $\left( 0,\infty \right) $ with the best constants $\lambda _{n}^{
\left[ 1\right] }$ and $\lambda _{n}^{\left[ 0\right] }$, where $\lambda
_{n}^{\left[ \theta \right] }$ is defined by (\ref{ln-thi}). are the best
constants. Consequently, the double inequality%
\begin{equation}
\prod_{j=1}^{n}\frac{\Gamma \left( p_{j}\right) }{\Gamma \left( q_{j}\right)
}<\prod_{j=1}^{n}\frac{\psi _{p_{j}}\left( x\right) }{\psi _{q_{j}}\left(
x\right) }<\prod_{j=1}^{n}\frac{\Gamma \left( p_{j}+1\right) }{\Gamma \left(
q_{j}+1\right) }  \label{Ppi/qi<>}
\end{equation}%
holds for all $x>0$. The lower and upper bounds are sharp.
\end{proposition}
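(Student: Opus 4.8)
The plan is to obtain Proposition \ref{P-gTd-cm-ep} as the special case of Theorem \ref{T1} with $f(t)=1/(1-e^{-t})$, for which $F_p(x)=\psi_p(x)=\Gamma(p+1)\zeta(p+1,x)$ in the notation of (\ref{Fp}), and then to translate the two resulting complete monotonicity statements into the two-sided estimate (\ref{Ppi/qi<>}). First I would take $\mathbb{I}=(0,\infty)$ and check the hypotheses of Theorem \ref{T1}: condition (1) is the convergence of $\psi_p$ for $p>0$ already noted after (\ref{Hzf}); condition (3) is the hypothesis $\boldsymbol{p}_{[n]}\prec\boldsymbol{q}_{[n]}$; and condition (2) concerns $t^pf(t)=t^p/(1-e^{-t})$, which behaves like $t^{p-1}$ near the origin and like $t^p$ at infinity.

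Next I would select the two admissible values of the parameter $\theta$. Writing $g(t)=t^2e^t/(e^t-1)^2$, which decreases from $1$ to $0$ on $(0,\infty)$, the sign of $[\ln f_\theta(t)]''$ shows that $f_\theta(t)=t^\theta f(t)$ is strictly log-concave for $\theta=1$ (since then $\theta>g(t)$ for all $t>0$) and strictly log-convex for $\theta=0$ (since then $\theta<g(t)$ for all $t>0$); in both cases $\theta<1+\min\{p_n,q_n\}$ because every entry is positive. Applying Theorem \ref{T1} with $\theta=1$ therefore makes $x\mapsto+\mathcal{D}_{\boldsymbol{p}_{[n]},\boldsymbol{q}_{[n]}}(x;\lambda_n^{[1]})=\prod_{j=1}^n\psi_{p_j}(x)-\lambda_n^{[1]}\prod_{j=1}^n\psi_{q_j}(x)$ CM, while applying it with $\theta=0$ makes $x\mapsto-\mathcal{D}_{\boldsymbol{p}_{[n]},\boldsymbol{q}_{[n]}}(x;\lambda_n^{[0]})=-\prod_{j=1}^n\psi_{p_j}(x)+\lambda_n^{[0]}\prod_{j=1}^n\psi_{q_j}(x)$ CM, where by (\ref{ln-thi}) one has $\lambda_n^{[1]}=\prod_{j=1}^n\Gamma(p_j)/\Gamma(q_j)$ and $\lambda_n^{[0]}=\prod_{j=1}^n\Gamma(p_j+1)/\Gamma(q_j+1)$.

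Since a CM function is nonnegative, dividing the two resulting inequalities by $\prod_{j=1}^n\psi_{q_j}(x)>0$ yields $\lambda_n^{[1]}\le\prod_{j=1}^n\frac{\psi_{p_j}(x)}{\psi_{q_j}(x)}\le\lambda_n^{[0]}$, which is (\ref{Ppi/qi<>}). To replace $\le$ by the strict inequalities and to identify $\lambda_n^{[1]},\lambda_n^{[0]}$ as the best constants, I would appeal to the asymptotics (\ref{zp-af}): using $\psi_p=\Gamma(p+1)\zeta(p+1,x)$ one finds $\frac{\psi_{p_j}(x)}{\psi_{q_j}(x)}\sim\frac{\Gamma(p_j+1)}{\Gamma(q_j+1)}\,x^{q_j-p_j}$ as $x\to0^+$ and $\sim\frac{\Gamma(p_j)}{\Gamma(q_j)}\,x^{q_j-p_j}$ as $x\to\infty$; taking the product over $j$ and using $\sum_jp_j=\sum_jq_j$ cancels all powers of $x$, so the product ratio tends to $\lambda_n^{[0]}$ as $x\to0^+$ and to $\lambda_n^{[1]}$ as $x\to\infty$. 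Hence these constants are precisely the supremum and infimum of the ratio, giving sharpness, and the ratio is strictly between them because the two CM functions are positive (not identically zero) on $(0,\infty)$.

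The step I expect to need the most care is hypothesis (2) of Theorem \ref{T1}, which asks for $0<t^pf(t)\le Me^{ct}$ for all $t>0$: this literal bound fails near $t=0$ when $0<p<1$, where $t^p/(1-e^{-t})\sim t^{p-1}\to\infty$. I would address this by observing that at infinity $t^p/(1-e^{-t})\sim t^p$ is dominated by $Me^{ct}$, while near the origin $t^p/(1-e^{-t})\sim t^{p-1}$ is an integrable singularity against $e^{-xt}$ for every $p>0$; thus the convergence of $F_p$ on $(0,\infty)$ and the log-concavity/convexity of $f_\theta$—the only properties of $f$ genuinely used in the proof of Theorem \ref{T1}—hold throughout $\mathbb{I}=(0,\infty)$, so the conclusion persists for all positive $p$.
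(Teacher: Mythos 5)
Your proposal is correct and takes essentially the same route as the paper: the complete monotonicity is obtained by applying Theorem \ref{T1} with the two extreme admissible parameters, and the sharpness of $\lambda_n^{[1]}$ and $\lambda_n^{[0]}$ from the asymptotics (\ref{zp-af}) combined with (\ref{Psip-1-Hz}); your normalization $f(t)=1/(1-e^{-t})$ with $\theta\in\{1,0\}$ is just the paper's choice $f(t)=t/(1-e^{-t})$ with $\theta\geq 0$ (resp.\ $\theta\leq -1$) shifted by one. The only real difference is that you explicitly flag, and plausibly patch, the failure of hypothesis (2) of Theorem \ref{T1} near $t=0$ when $0<p<1$, a point the paper's own proof passes over in silence.
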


\begin{proof}
The required complete monotonicity follows from Theorem \ref{T1}. It remains
to prove that $\lambda _{k}^{\left[ 1\right] }$ and $\lambda _{k}^{\left[ 0%
\right] }$ are the best constants. The asymptotic formulas (\ref{zp-af})
with the relation (\ref{Psip-1-Hz}) yields that%
\begin{equation*}
\prod_{j=1}^{n}\frac{\psi _{p_{j}}\left( x\right) }{\psi _{q_{j}}\left(
x\right) }\thicksim \left\{
\begin{array}{ll}
\prod\limits_{j=1}^{n}\frac{\Gamma \left( p_{j}+1\right) }{\Gamma \left(
q_{j}+1\right) }=\lambda _{k}^{\left[ 0\right] } & \text{as }x\rightarrow
0^{+}, \\
\prod\limits_{j=1}^{n}\frac{\Gamma \left( p_{j}\right) }{\Gamma \left(
q_{j}\right) }=\lambda _{k}^{\left[ 1\right] }, & \text{as }x\rightarrow
\infty ,%
\end{array}%
\right.
\end{equation*}%
which implies that $\lambda _{k}^{\left[ 1\right] }$ and $\lambda _{k}^{%
\left[ 0\right] }$ are the best constants. Accordingly, the lower and upper
bounds in (\ref{Ppi/qi<>}) are sharp. This completes the proof.
\end{proof}

Using the relation%
\begin{equation}
\psi _{p-1}\left( x\right) =\Gamma \left( p\right) \zeta \left( p,x\right)
\text{ for }x>0\text{ and }p>1,  \label{psip-1-Hz}
\end{equation}%
Proposition \ref{P-gTd-cm-ep} can be equivalently stated as follows.

\begin{proposition}
\label{P-gTd-cm-Hz}Let the $n$-tuples $\boldsymbol{p}_{\left[ n\right]
}=\left( p_{1},...,p_{n}\right) $ and $\boldsymbol{q}_{\left[ n\right]
}=\left( q_{1},...,q_{n}\right) \in \left( 1,\infty \right) ^{n}$ for $n\geq
2$ satisfy $\boldsymbol{p}_{\left[ n\right] }\prec \boldsymbol{q}_{\left[ n%
\right] }$. Then the function%
\begin{equation*}
x\mapsto \mathcal{D}_{\boldsymbol{p}_{\left[ n\right] },\boldsymbol{q}_{%
\left[ n\right] }}^{\left[ H\right] }\left( x;\lambda _{n}\right)
=\prod_{j=1}^{n}\zeta \left( p_{j},x\right) -\lambda
_{n}\prod_{j=1}^{n}\zeta \left( q_{j},x\right)
\end{equation*}%
is CM on $\left( 0,\infty \right) $ if and only if%
\begin{equation*}
\lambda _{n}\leq \prod_{j=1}^{n}\frac{q_{j}-1}{p_{j}-1};
\end{equation*}%
while the function $-\mathcal{D}_{\boldsymbol{p}_{\left[ n\right] },%
\boldsymbol{q}_{\left[ n\right] }}^{\left[ H\right] }\left( x;\lambda
_{n}\right) $ is CM in $x$ on $\left( 0,\infty \right) $ if and only if and $%
\lambda _{n}\geq 1$. Consequently, the double inequality%
\begin{equation}
\prod_{j=1}^{n}\frac{q_{j}-1}{p_{j}-1}<\prod_{j=1}^{n}\frac{\zeta \left(
p_{j},x\right) }{\zeta \left( q_{j},x\right) }<1  \label{Pzpj/zqj<>}
\end{equation}%
holds for all $x>0$. The lower and upper bounds are sharp.
\end{proposition}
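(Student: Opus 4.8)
The plan is to reduce Proposition \ref{P-gTd-cm-Hz} to the already-established Proposition \ref{P-gTd-cm-ep} by the substitution (\ref{psip-1-Hz}). Writing $\zeta(p,x)=\psi_{p-1}(x)/\Gamma(p)$ for $p>1$ and setting $a_j=p_j-1$, $b_j=q_j-1$, the hypotheses $p_j,q_j\in(1,\infty)$ and $\boldsymbol{p}_{[n]}\prec\boldsymbol{q}_{[n]}$ translate into $a_j,b_j\in(0,\infty)$ and $\boldsymbol{a}_{[n]}\prec\boldsymbol{b}_{[n]}$ (majorization is preserved under a common shift of all entries). First I would rewrite
\begin{equation*}
\mathcal{D}^{[H]}_{\boldsymbol{p}_{[n]},\boldsymbol{q}_{[n]}}(x;\lambda_n)=\frac{1}{\prod_{j=1}^{n}\Gamma(p_j)}\left(\prod_{j=1}^{n}\psi_{a_j}(x)-\mu_n\prod_{j=1}^{n}\psi_{b_j}(x)\right),\qquad \mu_n=\lambda_n\prod_{j=1}^{n}\frac{\Gamma(p_j)}{\Gamma(q_j)},
\end{equation*}
so that, up to the positive factor $1/\prod_{j}\Gamma(p_j)$, the complete monotonicity of $\mathcal{D}^{[H]}$ is that of a difference of products of the extended polygamma functions with arguments $\boldsymbol{a}_{[n]}\prec\boldsymbol{b}_{[n]}$.

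The core step is the sharp threshold. Proposition \ref{P-gTd-cm-ep}, applied to $\boldsymbol{a}_{[n]}\prec\boldsymbol{b}_{[n]}$, gives that $\prod_j\psi_{a_j}-\Lambda\prod_j\psi_{b_j}$ is CM for the best constant $\Lambda=\prod_j\Gamma(a_j)/\Gamma(b_j)=\prod_j\Gamma(p_j-1)/\Gamma(q_j-1)$ (this is $\lambda_n^{[1]}$ of (\ref{ln-thi}) in the shifted variables). I would then argue that $\prod_j\psi_{a_j}-\mu_n\prod_j\psi_{b_j}$ is CM if and only if $\mu_n\le\Lambda$: for sufficiency, write it as $\bigl(\prod_j\psi_{a_j}-\Lambda\prod_j\psi_{b_j}\bigr)+(\Lambda-\mu_n)\prod_j\psi_{b_j}$, a sum of a CM function and a nonnegative multiple of a product of CM functions, hence CM by the closure properties recalled in the Introduction; for necessity, CM forces nonnegativity, and since the ratio $\prod_j\psi_{a_j}/\prod_j\psi_{b_j}$ is bounded below by $\Lambda$ and tends to $\Lambda$ as $x\to\infty$, its infimum equals $\Lambda$. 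Converting $\mu_n\le\Lambda$ through $\mu_n=\lambda_n\prod_j\Gamma(p_j)/\Gamma(q_j)$ together with $\Gamma(q_j)/\Gamma(q_j-1)=q_j-1$ and $\Gamma(p_j-1)/\Gamma(p_j)=1/(p_j-1)$ yields exactly $\lambda_n\le\prod_j(q_j-1)/(p_j-1)$.

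The statement for $-\mathcal{D}^{[H]}$ is handled symmetrically, now using the second CM assertion of Proposition \ref{P-gTd-cm-ep}, namely that $-\prod_j\psi_{a_j}+\Lambda'\prod_j\psi_{b_j}$ is CM with best constant $\Lambda'=\prod_j\Gamma(a_j+1)/\Gamma(b_j+1)=\prod_j\Gamma(p_j)/\Gamma(q_j)$. The same decomposition, with the roles reversed, shows $\mu_n\prod_j\psi_{b_j}-\prod_j\psi_{a_j}$ is CM if and only if $\mu_n\ge\Lambda'$ (the supremum of the reciprocal ratio, attained in the limit $x\to0^+$), which after substituting $\mu_n=\lambda_n\Lambda'$ collapses to $\lambda_n\ge1$.

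Finally the double inequality (\ref{Pzpj/zqj<>}) follows by taking $\lambda_n=\prod_j(q_j-1)/(p_j-1)$ in the first CM assertion (giving the lower bound) and $\lambda_n=1$ in the second (giving the upper bound $1$). For sharpness I would invoke (\ref{zp-af}): since $\zeta(p,x)\sim x^{-p}$ as $x\to0^+$ and $\zeta(p,x)\sim 1/[(p-1)x^{p-1}]$ as $x\to\infty$, each product acquires a factor $x^{\sum_j(q_j-p_j)}$ that is identically $1$ because $\sum_j p_j=\sum_j q_j$ by majorization; hence the ratio $\prod_j\zeta(p_j,x)/\zeta(q_j,x)$ tends to $1$ as $x\to0^+$ and to $\prod_j(q_j-1)/(p_j-1)$ as $x\to\infty$, so both constants are best possible. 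Strictness then follows since a nonzero CM function is real-analytic on $(0,\infty)$ and cannot vanish at an interior point. The main obstacle I anticipate is the careful bookkeeping in the \emph{only if} directions—pinning down that the infimum (resp. supremum) of the relevant ratio equals the boundary constant $\Lambda$ (resp. $\Lambda'$), which is precisely where the asymptotics (\ref{zp-af}) and the best-constant part of Proposition \ref{P-gTd-cm-ep} are indispensable.
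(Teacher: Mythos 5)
Your proposal is correct and is essentially the paper's own route: the paper likewise obtains Proposition \ref{P-gTd-cm-Hz} directly from Proposition \ref{P-gTd-cm-ep} via the relation (\ref{psip-1-Hz}), the shift-invariance of majorization, and the asymptotics (\ref{zp-af}) for the sharp constants. You merely make explicit the routine details the paper leaves implicit --- the constant conversion $\mu_n=\lambda_n\prod_j\Gamma(p_j)/\Gamma(q_j)$, the decomposition giving sufficiency below/above the threshold, and the limit argument for necessity --- all of which check out.
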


Since $\left( \left( p+q\right) /2,\left( p+q\right) /2\right) \prec \left(
p,q\right) $ if $p\geq q>1$ and $\left( \left( p+q\right) /2,\left(
p+q\right) /2\right) \prec \left( q,p\right) $ if $q<p>1$, by the
inequalities (\ref{Pzpj/zqj<>}), we have the following Tur\'{a}n type
inequalities%
\begin{equation}
\frac{\left( p-1\right) \left( q-1\right) }{\left( \left( p+q\right)
/2-1\right) ^{2}}<\frac{\zeta \left( \left( p+q\right) /2,x\right) ^{2}}{%
\zeta \left( p,x\right) \zeta \left( q,x\right) }<1  \label{Pzpj/zqj<>s}
\end{equation}%
for all $x>0$ and $p,q>1$.

\begin{remark}
The second inequality of (\ref{Pzpj/zqj<>s}) is superior to the Tur\'{a}n
type inequality \cite[Eq. (2.23)]{Kumar-ITSF-27-2016}.
\end{remark}

The second and first inequality imply the log-convexity of $\zeta \left(
p,x\right) $ and log-concavity of $\left( p-1\right) \zeta \left( p,x\right)
$ in $p$ on $\left( 1,\infty \right) $.

\begin{corollary}
\label{C-P1}For fixed $x>0$, the functions $p\mapsto \zeta \left( p,x\right)
$ and $p\mapsto \left( p-1\right) \zeta \left( p,x\right) $ are log-convex
and log-concave on $\left( 1,\infty \right) $, respectively.
\end{corollary}

\begin{remark}
\label{R-CP1}The second inequality of (\ref{Pzpj/zqj<>s}) is equivalent to%
\begin{equation*}
x^{ap+b}\zeta \left( p,x\right) x^{aq+b}\zeta \left( q,x\right) >\left(
x^{a\left( p+q\right) /2+b}\zeta \left( \frac{p+q}{2},x\right) \right) ^{2},
\end{equation*}%
which implies that $p\mapsto x^{ap+b}\zeta \left( p,x\right) $ is log-convex
on $\left( 1,\infty \right) $ for any $a,b\in \mathbb{R}$. Putting $\left(
a,b\right) =\left( 1,0\right) $ yields \cite[Theorem (ii)]%
{Cvijovic-JMAA-487-2020}. This provides a new proof of \cite[Theorem (ii)]%
{Cvijovic-JMAA-487-2020}. Similarly, the function $p\mapsto \left(
p-1\right) x^{ap+b}\zeta \left( p,x\right) $ is also log-concave on $\left(
1,\infty \right) $ for any $a,b\in \mathbb{R}$.
\end{remark}

The Dirichlet's lambda function (see \cite[Ch. 3]{Oldham-AAF-SVNY-2009}) is
defined by%
\begin{equation*}
\lambda \left( p\right) =\sum_{k=1}^{\infty }\frac{1}{\left( 2k-1\right) ^{p}%
}=\frac{1}{2^{p}}\zeta \left( p,\frac{1}{2}\right) =\left( 1-2^{-p}\right)
\zeta \left( p\right) ,
\end{equation*}%
for $p>1$, where $\zeta \left( p\right) $ is the Riemann zeta function. Now,
by Corollary \ref{C-P1} and Remark \ref{R-CP1}, the function $\lambda \left(
p\right) $ and $\left( p-1\right) \lambda \left( p\right) $ are log-convex
and log-concave on $\left( 1,\infty \right) $, respectively. Then for $r>0$,
the functions%
\begin{equation*}
\frac{\lambda \left( p+r\right) }{\lambda \left( p\right) }\text{ \ and \ }%
\frac{\left( p+r-1\right) \lambda \left( p+r+1\right) }{\left( p-1\right)
\lambda \left( p\right) }
\end{equation*}%
are increasing and decreasing on $\left( 1,\infty \right) $ with%
\begin{equation*}
\lim_{p\rightarrow \infty }\frac{\lambda \left( p+r\right) }{\lambda \left(
p\right) }=\lim_{p\rightarrow \infty }\frac{\left( p+r-1\right) \lambda
\left( p+r+1\right) }{\left( p-1\right) \lambda \left( p\right) }=1.
\end{equation*}%
Therefore, the double inequality%
\begin{equation*}
\frac{p-1}{p+r-1}<\frac{\lambda \left( p+r\right) }{\lambda \left( p\right) }%
<1
\end{equation*}%
hold. By the relation $\lambda \left( p\right) =\left( 1-2^{-p}\right) \zeta
\left( p\right) $, we obtain the bounds for the ratio of two Riemann zeta
functions, which is a complement to \cite[Corollary 1]{Yang-JCAM-364-2020}.

\begin{corollary}
Let $p>1$ and $r>0$. The double inequality
\begin{equation}
\frac{p-1}{p+r-1}\frac{\left( 2^{p}-1\right) 2^{r}}{2^{p+r}-1}<\frac{\zeta
\left( p+r\right) }{\zeta \left( p\right) }<\frac{\left( 2^{p}-1\right) 2^{r}%
}{2^{p+r}-1}  \label{zp+r/zp<>}
\end{equation}%
holds.
\end{corollary}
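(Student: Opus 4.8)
The plan is to read off the claimed bounds directly from the double inequality
\begin{equation*}
\frac{p-1}{p+r-1}<\frac{\lambda \left( p+r\right) }{\lambda \left( p\right) }<1
\end{equation*}
established immediately before the statement, by substituting the defining relation $\lambda \left( s\right) =\left( 1-2^{-s}\right) \zeta \left( s\right) $ and isolating the ratio $\zeta \left( p+r\right) /\zeta \left( p\right) $. All of the analytic content (monotonicity of $\lambda \left( p+r\right) /\lambda \left( p\right) $ in $p$, its limit $1$ as $p\rightarrow \infty $, and the resulting bounds) is already in hand; what remains is a change of variables from $\lambda $ back to $\zeta $.

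First I would use $\lambda \left( p+r\right) =\left( 1-2^{-\left( p+r\right) }\right) \zeta \left( p+r\right) $ and $\lambda \left( p\right) =\left( 1-2^{-p}\right) \zeta \left( p\right) $ to write
\begin{equation*}
\frac{\lambda \left( p+r\right) }{\lambda \left( p\right) }=\frac{1-2^{-\left( p+r\right) }}{1-2^{-p}}\cdot \frac{\zeta \left( p+r\right) }{\zeta \left( p\right) },
\end{equation*}
and hence solve for the zeta ratio,
\begin{equation*}
\frac{\zeta \left( p+r\right) }{\zeta \left( p\right) }=\frac{1-2^{-p}}{1-2^{-\left( p+r\right) }}\cdot \frac{\lambda \left( p+r\right) }{\lambda \left( p\right) }.
\end{equation*}

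Next I would simplify the correction factor by clearing the negative powers of two:
\begin{equation*}
\frac{1-2^{-p}}{1-2^{-\left( p+r\right) }}=\frac{\left( 2^{p}-1\right) /2^{p}}{\left( 2^{p+r}-1\right) /2^{p+r}}=\frac{\left( 2^{p}-1\right) 2^{r}}{2^{p+r}-1},
\end{equation*}
which is strictly positive for $p>1$ and $r>0$. Since multiplying a chain of strict inequalities by a positive quantity preserves both inequalities, applying this factor to the established bounds for $\lambda \left( p+r\right) /\lambda \left( p\right) $ yields at once
\begin{equation*}
\frac{p-1}{p+r-1}\frac{\left( 2^{p}-1\right) 2^{r}}{2^{p+r}-1}<\frac{\zeta \left( p+r\right) }{\zeta \left( p\right) }<\frac{\left( 2^{p}-1\right) 2^{r}}{2^{p+r}-1},
\end{equation*}
which is precisely the asserted inequality (\ref{zp+r/zp<>}).

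I do not anticipate any genuine obstacle: the work has all been absorbed into the monotonicity and limit analysis of $\lambda \left( p+r\right) /\lambda \left( p\right) $ performed earlier, so the present step reduces to a substitution followed by the elementary algebraic simplification of $\left( 1-2^{-p}\right) /\left( 1-2^{-\left( p+r\right) }\right) $. The only point deserving a word of care is to note that this correction factor is positive, so that the direction of both inequalities is unchanged under multiplication.
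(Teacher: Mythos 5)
Your proposal is correct and takes essentially the same route as the paper: the paper also obtains (\ref{zp+r/zp<>}) simply by substituting the relation $\lambda \left( s\right) =\left( 1-2^{-s}\right) \zeta \left( s\right) $ into the double inequality $\frac{p-1}{p+r-1}<\lambda \left( p+r\right) /\lambda \left( p\right) <1$ established immediately beforehand and rearranging. Your explicit simplification $\left( 1-2^{-p}\right) /\left( 1-2^{-\left( p+r\right) }\right) =\left( 2^{p}-1\right) 2^{r}/\left( 2^{p+r}-1\right) $, together with the remark that this factor is positive, merely spells out the algebra the paper leaves implicit.
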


\begin{remark}
Taking $\left( p,r\right) =\left( 2n,2\right) $ for $n\in \mathbb{N}$ in (%
\ref{zp+r/zp<>}) and using the relation%
\begin{equation*}
\zeta \left( 2n\right) =\frac{\left( 2\pi \right) ^{2n}}{2\left( 2n\right) !}%
\left\vert B_{2n}\right\vert \text{ \ (\cite[Eq. (23.2.16)]%
{Abramowitz-HMFFGMT-1972})}
\end{equation*}%
indicate the double inequality%
\begin{equation}
\frac{\left( 2n+2\right) \left( 2n-1\right) }{\pi ^{2}}\frac{2^{2n}-1}{%
2^{2n+2}-1}<\frac{\left\vert B_{2n+2}\right\vert }{\left\vert
B_{2n}\right\vert }<\frac{\left( 2n+2\right) \left( 2n+1\right) }{\pi ^{2}}%
\frac{2^{2n}-1}{2^{2n+2}-1}  \label{B2n+2/B2n<>}
\end{equation}%
for $n\in \mathbb{N}$. The right hand side inequality of (\ref{B2n+2/B2n<>})
was first proved by Qi \cite[Theorem 1.1]{Qi-JCAM-351-2019}, and the left
hand side one is weaker than \cite[Eq. (1.3)]{Qi-JCAM-351-2019}.
\end{remark}

We close this subsection by giving an inequality which is near to answer to
an open problem in \cite[Open problem 2]{Yang-JCAM-364-2020}.

\begin{corollary}
Let $p>1$ and $q>0$. The inequality%
\begin{equation}
\frac{1}{\zeta \left( p+q\right) }<\frac{\alpha _{p,q}}{\zeta \left(
p+2q\right) }+\frac{\beta _{p,q}}{\zeta \left( p\right) }  \label{1/zp+q<}
\end{equation}%
hold, where%
\begin{equation*}
\alpha _{p,q}=\frac{p+q-1}{p-1}\frac{2^{p+q}-1}{2^{q+1}\left( 2^{p}-1\right)
}\text{ and }\beta _{p,q}=\frac{p+q-1}{p+2q-1}\frac{2^{q-1}\left(
2^{p+q}-1\right) }{2^{p+2q}-1}
\end{equation*}
\end{corollary}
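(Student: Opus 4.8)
The plan is to combine the lower Tur\'{a}n-type bound already established in (\ref{Pzpj/zqj<>s}) with the arithmetic--geometric mean inequality. First I would record that the Riemann zeta function is the special value $\zeta(p)=\zeta(p,1)$ of the Hurwitz zeta function, so that (\ref{Pzpj/zqj<>s}) may be applied at $x=1$. The crucial structural observation is that $p+q$ is exactly the midpoint of $p$ and $p+2q$ (here $p<p+2q$ because $q>0$), so the two arguments entering the Tur\'{a}n inequality should be taken to be $p$ and $p+2q$.

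Applying the first inequality in (\ref{Pzpj/zqj<>s}) with this pair of arguments and $x=1$ gives, after taking reciprocals,
\[
\frac{\zeta(p)\,\zeta(p+2q)}{\zeta(p+q)^{2}}<\frac{(p+q-1)^{2}}{(p-1)(p+2q-1)}.
\]
Next I would compute the product $4\alpha_{p,q}\beta_{p,q}$ straight from the definitions. Since the powers of two contribute a factor $2^{q-1}/2^{q+1}=1/4$, the rational parts combine to $\tfrac{(p+q-1)^{2}}{(p-1)(p+2q-1)}$ and the exponential parts to $\tfrac{(2^{p+q}-1)^{2}}{(2^{p}-1)(2^{p+2q}-1)}$, yielding
\[
4\alpha_{p,q}\beta_{p,q}=\frac{(p+q-1)^{2}}{(p-1)(p+2q-1)}\cdot\frac{(2^{p+q}-1)^{2}}{(2^{p}-1)(2^{p+2q}-1)}.
\]

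The step I expect to carry the real weight is the elementary inequality $(2^{p+q}-1)^{2}\ge(2^{p}-1)(2^{p+2q}-1)$. This is immediate once the difference is expanded and factored as $2^{p}(2^{q}-1)^{2}\ge0$, which is strict for $q>0$; it shows that the second factor above is at least $1$, hence $\tfrac{(p+q-1)^{2}}{(p-1)(p+2q-1)}\le 4\alpha_{p,q}\beta_{p,q}$. Chaining this with the previous display gives $\zeta(p)\zeta(p+2q)/\zeta(p+q)^{2}<4\alpha_{p,q}\beta_{p,q}$, equivalently
\[
\frac{1}{\zeta(p+q)}<2\sqrt{\frac{\alpha_{p,q}\beta_{p,q}}{\zeta(p)\,\zeta(p+2q)}}.
\]

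Finally I would close with AM--GM applied to the two positive quantities $\alpha_{p,q}/\zeta(p+2q)$ and $\beta_{p,q}/\zeta(p)$ (positivity of $\alpha_{p,q},\beta_{p,q}$ follows at once from $p>1$ and $q>0$), namely $2\sqrt{uv}\le u+v$, to obtain
\[
\frac{1}{\zeta(p+q)}<\frac{\alpha_{p,q}}{\zeta(p+2q)}+\frac{\beta_{p,q}}{\zeta(p)},
\]
which is the claimed inequality (\ref{1/zp+q<}). The only genuinely new idea beyond the bounds already in hand is the decision to control $1/\zeta(p+q)$ through its square using the lower Tur\'{a}n inequality and then to split the resulting geometric mean by AM--GM; everything else reduces to the routine algebraic verification of the exponential inequality above.
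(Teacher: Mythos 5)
Your proof is correct, but it takes a genuinely different route from the paper's. The paper argues through the Dirichlet lambda function: the log-concavity of $\left( p-1\right) \lambda \left( p\right) $ on $\left( 1,\infty \right) $ (obtained from Remark \ref{R-CP1} via $\lambda \left( p\right) =2^{-p}\zeta \left( p,1/2\right) $) makes $1/\left( \left( p-1\right) \lambda \left( p\right) \right) $ convex, and midpoint convexity at the points $p$ and $p+2q$, followed by the exact substitution $\lambda \left( s\right) =\left( 1-2^{-s}\right) \zeta \left( s\right) $, produces the inequality in one stroke --- the powers of two in $\alpha _{p,q}$ and $\beta _{p,q}$ fall out of that substitution rather than from any separate estimate. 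You instead work with the Riemann zeta function directly, invoking the lower Tur\'{a}n bound (\ref{Pzpj/zqj<>s}) at $x=1$ (equivalently, the log-concavity of $\left( p-1\right) \zeta \left( p\right) $ from Corollary \ref{C-P1}) for the pair $\left( p,p+2q\right) $ with midpoint $p+q$, absorbing the exponential factors through the elementary inequality $\left( 2^{p+q}-1\right) ^{2}>\left( 2^{p}-1\right) \left( 2^{p+2q}-1\right) $, whose difference is indeed $2^{p}\left( 2^{q}-1\right) ^{2}>0$, and finishing with AM--GM; every step checks out, including the computation $4\alpha _{p,q}\beta _{p,q}=\frac{\left( p+q-1\right) ^{2}}{\left( p-1\right) \left( p+2q-1\right) }\cdot \frac{\left( 2^{p+q}-1\right) ^{2}}{\left( 2^{p}-1\right) \left( 2^{p+2q}-1\right) }$. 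Your route carries a notable dividend: you actually establish the intermediate bound $\frac{1}{\zeta \left( p+q\right) }<2\sqrt{\alpha _{p,q}\beta _{p,q}/\left( \zeta \left( p\right) \zeta \left( p+2q\right) \right) }$, and since AM--GM splits this product either way, you obtain the corollary with \emph{either} pairing of $\alpha _{p,q},\beta _{p,q}$ against the two zeta denominators. This matters here, because the paper's own convexity computation, carried out explicitly, yields $\frac{1}{\zeta \left( p+q\right) }<\frac{\alpha _{p,q}}{\zeta \left( p\right) }+\frac{\beta _{p,q}}{\zeta \left( p+2q\right) }$ --- the pairing opposite to the one printed in the statement (apparently a typographical swap of denominators in the corollary) --- whereas your symmetric argument covers the printed version as well, so in this respect your proof is the more robust of the two.
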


\begin{proof}
The log-concavity of the function $\left( p-1\right) \lambda \left( p\right)
$ on $\left( 1,\infty \right) $ implies that the function $1/\left( \left(
p-1\right) \lambda \left( p\right) \right) $ is convex on $\left( 1,\infty
\right) $. Hence, the inequality%
\begin{equation*}
\frac{1}{\left( p+q-1\right) \lambda \left( p+q\right) }<\frac{1/2}{\left(
p-1\right) \lambda \left( p\right) }+\frac{1/2}{\left( p+2q-1\right) \lambda
\left( p+2q\right) }
\end{equation*}%
holds for $p>1$ and $q>0$. Substituting $\lambda \left( p\right) =\left(
1-2^{-p}\right) \zeta \left( p\right) $ into the above inequality then
arranging leads to (\ref{1/zp+q<}).
\end{proof}

\subsection{Application to the alternating Hurwitz zeta function: Extension
of \protect\cite[Proposition 5.1]{Yang-JMAA-551-2025}}

Consider the function%
\begin{equation*}
\beta _{p}\left( x\right) =\int_{0}^{\infty }\frac{t^{p}e^{-tx}}{1+e^{-t}}dt,
\end{equation*}%
which satisfies $\beta _{p}\left( x\right) =\left( -1\right) ^{p-1}\beta
^{\left( p\right) }\left( x\right) $ if $p\in \mathbb{N}$, where%
\begin{equation*}
\beta _{0}\left( x\right) =\beta \left( x\right) =\int_{0}^{\infty }\frac{%
e^{-tx}}{1+e^{-t}}dt=\psi \left( x\right) -\psi \left( \frac{x}{2}\right)
-\ln 2
\end{equation*}%
is called Nielsen's $\beta $-function (see \cite{Nielsen-HTG-1906}, \cite%
{Nantomah-PAIA-6-2017}).

Now, series expansion gives%
\begin{equation}
\begin{array}{cc}
\beta _{p}\left( x\right) & =\int_{0}^{\infty }t^{p}\left(
\sum\limits_{k=0}^{\infty }\left( -1\right) ^{k}e^{-kt}\right)
e^{-xt}dt=\sum\limits_{k=0}^{\infty }\left( -1\right) ^{k}\left(
\int_{0}^{\infty }t^{p}e^{-\left( k+x\right) t}dt\right) \\
& =\Gamma \left( p+1\right) \sum\limits_{k=0}^{\infty }\frac{\left(
-1\right) ^{k}}{\left( k+x\right) ^{p+1}}=\Gamma \left( p+1\right) \zeta
^{\ast }\left( p+1,x\right) ,%
\end{array}
\label{bp-aHzp+1}
\end{equation}%
where%
\begin{equation}
\zeta ^{\ast }\left( p,x\right) =\sum_{k=0}^{\infty }\frac{\left( -1\right)
^{k}}{\left( k+x\right) ^{p}}  \label{aHzf}
\end{equation}%
is the alternating Hurwitz zeta function. Since $\zeta ^{\ast }\left(
p,x\right) $ converges on $\left( 0,\infty \right) $ for $p>0$, so is $\beta
_{p}\left( x\right) $ for $p>-1$.

The alternating Hurwitz zeta function $\zeta ^{\ast }\left( p,x\right) $ has
the asymptotic expansion \cite[Theorem 3.1]{Hu-JMAA-537-2024}\ that%
\begin{equation}
\zeta ^{\ast }\left( p,x\right) \sim \frac{1}{2x^{p}}+\frac{p}{4x^{p+1}}-%
\frac{1}{2}\sum_{k=1}^{\infty }\frac{E_{2k+1}\left( 0\right) }{\left(
2k+1\right) !}\frac{\left( p\right) _{2k+1}}{x^{2k+1+p}}\text{ as }%
x\rightarrow \infty ,  \label{aHz-ae}
\end{equation}%
where $E_{2k+1}\left( 0\right) $ are the special values of odd-order Euler
polynomials at $0$. This and (\ref{aHzf}) indicate that%
\begin{equation}
\lim_{x\rightarrow 0^{+}}\left( x^{p}\zeta ^{\ast }\left( p,x\right) \right)
=1\text{ \ and \ }\lim_{x\rightarrow \infty }\left( x^{p}\zeta ^{\ast
}\left( p,x\right) \right) =\frac{1}{2}.  \label{aHz-af}
\end{equation}

Moreover, it was shown in \cite[p. 10]{Yang-JMAA-551-2025} that, the
function $f_{\theta }\left( t\right) =t^{\theta }f\left( t\right) =t^{\theta
}/\left( 1+e^{-t}\right) $ satisfies that $\left[ \ln f_{\theta }\left(
t\right) \right] ^{\prime \prime }\geq \left( \leq \right) 0$ for $t>0$ if
and only if $\theta \leq -\theta _{0}$ ($\geq 0$), where%
\begin{equation*}
\theta _{0}=\frac{t_{0}^{2}e^{t_{0}}}{\left( e^{t_{0}}+1\right) ^{2}}%
=-0.439\,228...\text{,}
\end{equation*}%
and $t_{0}=2.399\,357...$ is the unique solution of the equation $%
te^{t}-2e^{t}-t-2=0$ on $\left( 0,\infty \right) $. Thus, \cite[Proposition
5.1]{Yang-JMAA-551-2025} can be extended as follows.

\begin{proposition}
\label{P-gTd-cm-b}Let the $n$-tuples $\boldsymbol{p}_{\left[ n\right]
}=\left( p_{1},...,p_{n}\right) $ and $\boldsymbol{q}_{\left[ n\right]
}=\left( q_{1},...,q_{n}\right) \in \left( -1,\infty \right) ^{n}$ for $%
n\geq 2$ satisfy $\boldsymbol{p}_{\left[ n\right] }\prec \boldsymbol{q}_{%
\left[ n\right] }$. If $0\leq \theta <1+\min \left\{ p_{n},q_{n}\right\} $,
then the function%
\begin{equation*}
x\mapsto \mathcal{D}_{\boldsymbol{p}_{\left[ n\right] },\boldsymbol{q}_{%
\left[ n\right] }}^{\left[ \beta \right] }\left( x;\lambda _{n}^{\left[
\theta \right] }\right) =\prod_{j=1}^{n}\beta _{p_{j}}\left( x\right)
-\lambda _{n}^{\left[ \theta \right] }\prod_{j=1}^{n}\beta _{q_{j}}\left(
x\right)
\end{equation*}%
is CM on $\left( 0,\infty \right) $, where $\lambda _{n}^{\left[ \theta %
\right] }$ is defined by (\ref{ln-thi}). If $\theta \leq \theta
_{0}=-0.439\,228...$, then $-\mathcal{D}_{\boldsymbol{p}_{\left[ n\right] },%
\boldsymbol{q}_{\left[ n\right] }}$ is CM in $x$ on $\left( 0,\infty \right)
$. Consequently, if $0\leq \vartheta <1+\min \left\{ p_{n},q_{n}\right\} $
and $\theta \leq \theta _{0}$, then the double inequality%
\begin{equation}
\prod_{j=1}^{n}\frac{\Gamma \left( p_{j}-\vartheta +1\right) }{\Gamma \left(
q_{j}-\vartheta +1\right) }<\frac{\prod_{j=1}^{n}\beta _{p_{j}}\left(
x\right) }{\prod_{j=1}^{n}\beta _{q_{j}}\left( x\right) }<\prod_{j=1}^{n}%
\frac{\Gamma \left( p_{j}-\theta +1\right) }{\Gamma \left( q_{j}-\theta
+1\right) }  \label{Pbpi/bqi<>}
\end{equation}%
holds for all $x>0$, where the lower bound for $\vartheta =0$ is sharp.
\end{proposition}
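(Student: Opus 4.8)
The plan is to apply Theorem \ref{T1} verbatim, taking $f(t)=1/(1+e^{-t})$ so that $F_p(x)=\beta_p(x)$ by definition. First I would fix a compact interval $\mathbb{I}\subseteq(-1,\infty)$ containing every coordinate of $\boldsymbol{p}_{[n]}$ and $\boldsymbol{q}_{[n]}$ and check the three hypotheses on $\mathbb{I}$. Hypothesis (1), convergence of $\beta_p$ on $(0,\infty)$ for $p>-1$, is exactly the observation recorded after (\ref{bp-aHzp+1})--(\ref{aHzf}), namely $\beta_p(x)=\Gamma(p+1)\zeta^{\ast}(p+1,x)$ together with the convergence of $\zeta^{\ast}(\cdot,x)$ for exponent $>0$. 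Hypothesis (2) is where the only genuine care is needed: since $0<t^p/(1+e^{-t})\le t^p$, the growth $\le Me^{ct}$ is immediate for $t\to\infty$ on a compact $\mathbb{I}$, while near $t=0$ the factor $t^p$ blows up for $p<0$, so I would lean on the convergence statement (1) for $p>-1$ to supply integrability there rather than on a literal uniform pointwise bound.

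Next I would invoke the log-concavity/log-convexity computation already displayed in the excerpt for $f_\theta(t)=t^\theta/(1+e^{-t})$, which gives $[\ln f_\theta(t)]''\le 0$ (log-concave) exactly when $\theta\ge 0$ and $[\ln f_\theta(t)]''\ge 0$ (log-convex) exactly when $\theta\le\theta_0$. Feeding these two regimes into the two branches of Theorem \ref{T1} produces precisely the two complete monotonicity assertions: the log-concave branch yields that $x\mapsto\mathcal{D}^{[\beta]}_{\boldsymbol{p}_{[n]},\boldsymbol{q}_{[n]}}(x;\lambda_n^{[\theta]})$ is CM whenever $0\le\theta<1+\min\{p_n,q_n\}$, and the log-convex branch yields that $x\mapsto-\mathcal{D}^{[\beta]}_{\boldsymbol{p}_{[n]},\boldsymbol{q}_{[n]}}(x;\lambda_n^{[\theta]})$ is CM whenever $\theta\le\theta_0$. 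The side constraint $\theta<1+\min\{p_n,q_n\}$ from Theorem \ref{T1} is automatic in the log-convex case because $\theta_0<0<1+\min\{p_n,q_n\}$, using $p_n,q_n>-1$.

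For the double inequality (\ref{Pbpi/bqi<>}) I would use only that a CM function is nonnegative on $(0,\infty)$. Choosing $\theta=\vartheta$ with $0\le\vartheta<1+\min\{p_n,q_n\}$ in the log-concave case forces $\mathcal{D}^{[\beta]}_{\boldsymbol{p}_{[n]},\boldsymbol{q}_{[n]}}(x;\lambda_n^{[\vartheta]})\ge 0$, which is the lower bound $\prod_{j}\beta_{p_j}/\prod_{j}\beta_{q_j}\ge\lambda_n^{[\vartheta]}$; choosing $\theta\le\theta_0$ in the log-convex case gives $-\mathcal{D}^{[\beta]}_{\boldsymbol{p}_{[n]},\boldsymbol{q}_{[n]}}(x;\lambda_n^{[\theta]})\ge 0$, which is the upper bound. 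Sharpness of the lower bound at $\vartheta=0$ I would extract from the asymptotics (\ref{aHz-af}): since $\beta_p(x)=\Gamma(p+1)\zeta^{\ast}(p+1,x)$ and $x^{p+1}\zeta^{\ast}(p+1,x)\to 1$ as $x\to 0^{+}$, one gets $\prod_{j}\beta_{p_j}(x)/\prod_{j}\beta_{q_j}(x)\sim\bigl(\prod_{j}\Gamma(p_j+1)/\Gamma(q_j+1)\bigr)\,x^{\sum_j(q_j-p_j)}$, and the majorization hypothesis $\boldsymbol{p}_{[n]}\prec\boldsymbol{q}_{[n]}$ forces $\sum_j p_j=\sum_j q_j$, so the power of $x$ cancels and the ratio tends to $\lambda_n^{[0]}=\prod_{j}\Gamma(p_j+1)/\Gamma(q_j+1)$; the same cancellation (with an extra factor $1/2$ per term that cancels in the ratio) occurs as $x\to\infty$. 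This exhibits $\lambda_n^{[0]}$ as a limiting value of the ratio, so the lower bound for $\vartheta=0$ is sharp.

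The only real obstacle I anticipate is the honest verification of hypothesis (2) over the full range $p\in(-1,\infty)$: the literal pointwise estimate $t^pf(t)\le Me^{ct}$ fails near $t=0$ once $p<0$, so the application must inherit, rather than re-prove, the analytic machinery of Theorem \ref{T1} by replacing that pointwise bound with the convergence hypothesis (1) on a correctly chosen compact $\mathbb{I}\subseteq(-1,\infty)$. Everything else—the two CM conclusions, the bracketing inequality, and the sharpness—is then a mechanical consequence of Theorem \ref{T1}, the established concavity/convexity thresholds, and the asymptotics in (\ref{aHz-af}).
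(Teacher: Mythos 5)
Your proposal is correct and follows essentially the same route as the paper: the paper's proof likewise just invokes Theorem \ref{T1} with $f(t)=1/(1+e^{-t})$ via the stated thresholds ($\theta\geq 0$ for log-concavity, $\theta\leq\theta_{0}$ for log-convexity, the latter automatically satisfying $\theta<1+\min\{p_{n},q_{n}\}$), derives (\ref{Pbpi/bqi<>}) from nonnegativity of CM functions, and proves sharpness at $\vartheta=0$ by combining (\ref{bp-aHzp+1}) with the limits (\ref{aHz-af}) and the cancellation $\sum_{j}p_{j}=\sum_{j}q_{j}$, exactly as you do. Your caveat that hypothesis (2) of Theorem \ref{T1} fails literally near $t=0$ when $p<0$ (since $t^{p}/(1+e^{-t})\to\infty$) is a genuine observation which the paper's own proof passes over in silence, so on that point you are, if anything, more scrupulous than the source.
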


\begin{proof}
The required complete monotonicity follows from Theorem \ref{T1}. It remains
to prove the lower bound in (\ref{Pbpi/bqi<>}) is sharp. The asymptotic
formulas (\ref{aHz-af}) with the relation (\ref{bp-aHzp+1}) gives for $%
\vartheta =0$%
\begin{equation*}
\lim_{x\rightarrow 0^{+}}\frac{\prod_{j=1}^{n}\beta _{p_{j}}\left( x\right)
}{\prod_{j=1}^{n}\beta _{q_{j}}\left( x\right) }=\lim_{x\rightarrow \infty }%
\frac{\prod_{j=1}^{n}\beta _{p_{j}}\left( x\right) }{\prod_{j=1}^{n}\beta
_{q_{j}}\left( x\right) }=\prod_{j=1}^{n}\frac{\Gamma \left( p_{j}+1\right)
}{\Gamma \left( q_{j}+1\right) },
\end{equation*}%
which implies that the lower bound in (\ref{Pbpi/bqi<>}) for $\vartheta =0$
is sharp. This completes the proof.
\end{proof}

By the relation%
\begin{equation}
\beta _{p-1}\left( x\right) =\Gamma \left( p\right) \zeta ^{\ast }\left(
p,x\right) \text{ for }x>0\text{ and }p>0,  \label{bp-1-aHz}
\end{equation}%
Proposition \ref{P-gTd-cm-b} can be equivalently stated as follows.

\begin{proposition}
\label{P-gTd-cm-aHz}Let the $n$-tuples $\boldsymbol{p}_{\left[ n\right]
}=\left( p_{1},...,p_{n}\right) $ and $\boldsymbol{q}_{\left[ n\right]
}=\left( q_{1},...,q_{n}\right) \in \left( 0,\infty \right) ^{n}$ for $n\geq
2$ satisfy $\boldsymbol{p}_{\left[ n\right] }\prec \boldsymbol{q}_{\left[ n%
\right] }$. If $0\leq \theta <\min \left\{ p_{n},q_{n}\right\} $, then the
function%
\begin{equation*}
x\mapsto \mathcal{D}_{\boldsymbol{p}_{\left[ n\right] },\boldsymbol{q}_{%
\left[ n\right] }}^{\left[ aH\right] }\left( x;\lambda _{n}^{\left[ \theta %
\right] }\right) =\prod_{j=1}^{n}\zeta ^{\ast }\left( p_{j},x\right) -\frac{%
\lambda _{n}^{\left[ \theta +1\right] }}{\lambda _{n}^{\left[ 1\right] }}%
\prod_{j=1}^{n}\zeta ^{\ast }\left( q_{j},x\right)
\end{equation*}%
is CM on $\left( 0,\infty \right) $, where $\lambda _{n}^{\left[ \theta %
\right] }$ is defined by (\ref{ln-thi}); if $\theta \leq \theta
_{0}=-0.439\,228...$, then $-\mathcal{D}_{\boldsymbol{p}_{\left[ n\right] },%
\boldsymbol{q}_{\left[ n\right] }}^{\left[ aH\right] }$ is CM in $x$ on $%
\left( 0,\infty \right) $. Consequently, if $0\leq \vartheta <\min \left\{
p_{n},q_{n}\right\} $ and $\theta \leq \theta _{0}$, then the double
inequality%
\begin{equation}
\prod_{j=1}^{n}\frac{\Gamma \left( p_{j}-\vartheta \right) \Gamma \left(
q_{j}\right) }{\Gamma \left( q_{j}-\vartheta \right) \Gamma \left(
p_{j}\right) }<\frac{\prod_{j=1}^{n}\zeta ^{\ast }\left( p_{j},x\right) }{%
\prod_{j=1}^{n}\zeta ^{\ast }\left( q_{j},x\right) }<\prod_{j=1}^{n}\frac{%
\Gamma \left( p_{j}-\theta \right) \Gamma \left( q_{j}\right) }{\Gamma
\left( q_{j}-\theta \right) \Gamma \left( p_{j}\right) }
\label{PaHzpj/aHzqj<>}
\end{equation}%
holds for all $x>0$, where the lower bounds in (\ref{PaHzpj/aHzqj<>}) for $%
\vartheta =0$ is the best possible.
\end{proposition}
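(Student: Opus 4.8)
The plan is to obtain this proposition as the reformulation, via the relation~(\ref{bp-1-aHz}), of Proposition~\ref{P-gTd-cm-b} under the index shift $p\mapsto p-1$. First I would record that majorization is invariant under a common translation of all coordinates, so $\boldsymbol{p}_{[n]}\prec\boldsymbol{q}_{[n]}$ for the $\zeta^{\ast}$-arguments is equivalent to $(p_1-1,\dots,p_n-1)\prec(q_1-1,\dots,q_n-1)$ for the associated $\beta$-indices. Moreover, $p_j,q_j\in(0,\infty)$ corresponds precisely to the $\beta$-indices lying in $(-1,\infty)$, and the threshold $1+\min\{p_n-1,q_n-1\}=\min\{p_n,q_n\}$ reproduces the constraint $0\le\theta<\min\{p_n,q_n\}$ of the first assertion, while the log-convexity threshold $\theta\le\theta_0$ carries over unchanged. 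Thus the hypotheses of the present proposition are exactly the translates of those of Proposition~\ref{P-gTd-cm-b}.

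Next I would substitute $\beta_{p_j-1}(x)=\Gamma(p_j)\,\zeta^{\ast}(p_j,x)$ and $\beta_{q_j-1}(x)=\Gamma(q_j)\,\zeta^{\ast}(q_j,x)$ into the function $\mathcal{D}^{[\beta]}$ of Proposition~\ref{P-gTd-cm-b} formed from the shifted indices. Factoring out the positive constant $\prod_{j=1}^{n}\Gamma(p_j)$, that CM function equals $\prod_{j=1}^{n}\Gamma(p_j)$ times
\begin{equation*}
\prod_{j=1}^{n}\zeta^{\ast}(p_j,x)-\Bigl(\prod_{j=1}^{n}\frac{\Gamma(p_j-\theta)\,\Gamma(q_j)}{\Gamma(q_j-\theta)\,\Gamma(p_j)}\Bigr)\prod_{j=1}^{n}\zeta^{\ast}(q_j,x),
\end{equation*}
and a short computation with the definition~(\ref{ln-thi}) identifies the bracketed coefficient with $\lambda_n^{[\theta+1]}/\lambda_n^{[1]}$. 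Since multiplication by the positive constant $\prod_{j=1}^{n}\Gamma(p_j)$ preserves complete monotonicity, the CM of $\mathcal{D}^{[aH]}$ (respectively of $-\mathcal{D}^{[aH]}$) follows at once from the two assertions of Proposition~\ref{P-gTd-cm-b}.

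For the double inequality~(\ref{PaHzpj/aHzqj<>}) I would use that a CM function is nonnegative: the first assertion with parameter $\vartheta$ gives $\mathcal{D}^{[aH]}(x;\lambda_n^{[\vartheta+1]}/\lambda_n^{[1]})\ge 0$, hence the lower bound, and the second assertion with $\theta\le\theta_0$ gives $-\mathcal{D}^{[aH]}\ge 0$, hence the upper bound, with strictness coming from the fact that the two products are not proportional when $\boldsymbol{p}_{[n]}\neq\boldsymbol{q}_{[n]}$. Finally, for sharpness of the lower bound at $\vartheta=0$ I would invoke the asymptotics~(\ref{aHz-af}): since $\zeta^{\ast}(p,x)\sim x^{-p}$ as $x\to 0^{+}$ and $\zeta^{\ast}(p,x)\sim\tfrac12 x^{-p}$ as $x\to\infty$, and $\sum_j p_j=\sum_j q_j$ by majorization, the ratio $\prod_j\zeta^{\ast}(p_j,x)/\prod_j\zeta^{\ast}(q_j,x)$ tends to $1$ at both endpoints, which is precisely the value of the lower bound when $\vartheta=0$.

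The only delicate point is the bookkeeping: tracking the index shift through the definition~(\ref{ln-thi}) and confirming that the rescaled coefficient is exactly $\lambda_n^{[\theta+1]}/\lambda_n^{[1]}$ rather than a neighbouring Gamma-ratio. Everything else is a transcription of Proposition~\ref{P-gTd-cm-b} and its proof, so no new analytic difficulty is expected.
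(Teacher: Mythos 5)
Your proposal is correct and follows essentially the same route as the paper, which obtains Proposition \ref{P-gTd-cm-aHz} precisely as the reformulation of Proposition \ref{P-gTd-cm-b} via the relation (\ref{bp-1-aHz}), with the index shift $p\mapsto p-1$ turning the threshold $1+\min\{p_n-1,q_n-1\}$ into $\min\{p_n,q_n\}$ and the coefficient into $\lambda_n^{[\theta+1]}/\lambda_n^{[1]}$. Your verification of the sharpness of the lower bound at $\vartheta=0$ via the asymptotics (\ref{aHz-af}) and $\sum_j p_j=\sum_j q_j$ likewise matches the argument the paper carries out in the proof of Proposition \ref{P-gTd-cm-b}.
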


Taking $\left( \vartheta ,\theta \right) =\left( 0,\theta _{0}\right) $, $%
\left( p_{1},p_{2}\right) =\left( \left( p+q\right) /2,\left( p+q\right)
/2\right) $ and $\left( q_{1},q_{2}\right) =\left( p,q\right) $ in \ref%
{PaHzpj/aHzqj<>} gives the following Tur\'{a}n type inequalities:%
\begin{equation}
1<\frac{\zeta ^{\ast }\left( \left( p+q\right) /2,x\right) ^{2}}{\zeta
^{\ast }\left( p,x\right) \zeta ^{\ast }\left( q,x\right) }<\frac{\Gamma
\left( \left( p+q\right) /2-\theta _{0}\right) ^{2}\Gamma \left( p\right)
\Gamma \left( q\right) }{\Gamma \left( \left( p+q\right) /2\right)
^{2}\Gamma \left( p-\theta _{0}\right) \Gamma \left( q-\theta _{0}\right) }
\label{PaHzpj/aHzqj<>s}
\end{equation}%
for $x>0$. This implies the following log-convexity property.

\begin{corollary}
\label{C-P2}For fixed $x>0$, the functions $p\mapsto \zeta ^{\ast }\left(
p,x\right) $ and%
\begin{equation*}
p\mapsto \frac{\Gamma \left( p\right) }{\Gamma \left( p-\theta _{0}\right) }%
\zeta ^{\ast }\left( p,x\right)
\end{equation*}
are log-concave and log-convex on $\left( 1,\infty \right) $, respectively.
\end{corollary}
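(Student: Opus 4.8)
The plan is to read off both assertions directly from the Tur\'{a}n type inequalities (\ref{PaHzpj/aHzqj<>s}), using the elementary fact that a positive continuous function on an interval is log-concave (log-convex) there precisely when it is midpoint log-concave (midpoint log-convex).

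First I would record that, for each fixed $x>0$, the map $p\mapsto\zeta^{\ast}(p,x)$ is positive and continuous on $(1,\infty)$; this is immediate from the series (\ref{aHzf}), which converges there as an alternating series with terms decreasing to $0$ and depends continuously on $p$. Hence it suffices to verify the two corresponding midpoint inequalities and then invoke the continuity equivalence above.

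For the first function, the left inequality in (\ref{PaHzpj/aHzqj<>s}) states exactly that $\zeta^{\ast}((p+q)/2,x)^2>\zeta^{\ast}(p,x)\,\zeta^{\ast}(q,x)$, i.e. $p\mapsto\zeta^{\ast}(p,x)$ is midpoint log-concave; combined with continuity this yields log-concavity on $(1,\infty)$. For the second function, set $g(p)=\Gamma(p)\zeta^{\ast}(p,x)/\Gamma(p-\theta_0)$ and multiply the right inequality in (\ref{PaHzpj/aHzqj<>s}) by the positive factor $\Gamma((p+q)/2)^2\,\Gamma(p-\theta_0)\,\Gamma(q-\theta_0)\big/\big(\Gamma((p+q)/2-\theta_0)^2\,\Gamma(p)\,\Gamma(q)\big)$; this turns it into $g((p+q)/2)^2<g(p)\,g(q)$, which is midpoint log-convexity of $g$. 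Since $g$ is again positive and continuous on $(1,\infty)$, it is log-convex there, as claimed.

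I do not expect any genuine obstacle: all the analytic content already sits in (\ref{PaHzpj/aHzqj<>s}). The only points demanding care are the bookkeeping of the Gamma-factor ratios in the rearrangement defining $g$, and the passage from the midpoint inequalities to full log-concavity/convexity, for which continuity of $p\mapsto\zeta^{\ast}(p,x)$ is all that is needed, with no derivative estimates in $p$ required.
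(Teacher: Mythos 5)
Your proposal is correct and takes essentially the same route as the paper: the paper obtains Corollary \ref{C-P2} directly from the Tur\'{a}n type inequalities (\ref{PaHzpj/aHzqj<>s}), whose left side is precisely the midpoint log-concavity of $p\mapsto \zeta ^{\ast }\left( p,x\right) $ and whose right side, after clearing the Gamma factors exactly as you do, is the midpoint log-convexity of $p\mapsto \Gamma \left( p\right) \zeta ^{\ast }\left( p,x\right) /\Gamma \left( p-\theta _{0}\right) $. Your only addition is to make explicit the standard passage from the midpoint inequalities to full log-concavity/log-convexity via positivity and continuity of $p\mapsto \zeta ^{\ast }\left( p,x\right) $, a step the paper leaves tacit.
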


The alternating Hurwitz zeta function $\zeta ^{\ast }\left( p,x\right) $
contains two important special cases, that are,%
\begin{eqnarray*}
\zeta ^{\ast }\left( p,1\right)  &=&\sum_{k=1}^{\infty }\frac{\left(
-1\right) ^{k-1}}{k^{p}}=\eta \left( p\right) , \\
2^{p}\zeta ^{\ast }\left( p,\frac{1}{2}\right)  &=&\sum_{k=0}^{\infty }\frac{%
\left( -1\right) ^{k}}{\left( 2k+1\right) ^{p}}=\beta \left( p\right) ,
\end{eqnarray*}%
where $\eta \left( p\right) $ and $\beta \left( p\right) $ are the Dirichlet
eta function and Dirichlet beta function \cite{Abramowitz-HMFFGMT-1972},
respectively. There exist the relations between they and Hurwitz zeta
function:%
\begin{eqnarray*}
\eta \left( p\right)  &=&\left( 1-2^{1-p}\right) \zeta \left( p,1\right)
=\left( 1-2^{1-p}\right) \zeta \left( p\right) , \\
\beta \left( p\right)  &=&\frac{1}{4^{p}}\left( \zeta \left( p,\frac{1}{4}%
\right) -\zeta \left( p,\frac{3}{4}\right) \right) ,
\end{eqnarray*}%
where $\zeta \left( p\right) =\zeta \left( p,1\right) $ is the Riemann zeta
function. In 1998, Wang \cite{Wang-JCCU-14-1998} proved that $p\mapsto \eta
\left( p\right) $ is log-concave on $\left( 0,\infty \right) $. The
concavity property of $\eta \left( p\right) $ on $\left( 0,\infty \right) $
was due to Alzer and Kwong \cite[Theorem 3.1]{Alzer-JNT-151-2015}. A more
general result that $p\mapsto x^{p}\zeta ^{\ast }\left( p,x\right) $ is
concave on $\left( 0,\infty \right) $ was proved by \cite[Theorem (i)]%
{Cvijovic-JMAA-487-2020}. In particular, $p\mapsto a^{-p}\zeta ^{\ast
}\left( p,1/a\right) =\eta _{a}\left( p\right) $ is concave on $\left(
0,\infty \right) $, which was proved in \cite[Theorem 1.1]%
{Adell-JNT-157-2015}.

\begin{remark}
\label{R1-CP2}Now, taking $x=1$ in Corollary \ref{C-P2} yields the
conclusion that $p\mapsto \zeta ^{\ast }\left( p,1\right) =\eta \left(
p\right) $ is log-concave on $\left( 0,\infty \right) $. This gives a new
and simple proof of \cite{Wang-JCCU-14-1998}.
\end{remark}

\begin{remark}
\label{R2-CP2}In a similar way to Remark \ref{R-CP1}, the log-concavity of $%
p\mapsto \zeta ^{\ast }\left( p,x\right) $ implies that so is $p\mapsto
x^{pa+b}\zeta ^{\ast }\left( p,x\right) $ for any $a,b\in \mathbb{R}$.
Hence, $p\mapsto x^{p}\zeta ^{\ast }\left( p,x\right) $ is also log-concave
on $\left( 0,\infty \right) $. This conclusion is weaker than \cite[Theorem
(i)]{Cvijovic-JMAA-487-2020}. In the same way, the function $p\mapsto \Gamma
\left( p\right) x^{p}\zeta ^{\ast }\left( p,x\right) /\Gamma \left( p-\theta
_{0}\right) $ is log-convex on $\left( 0,\infty \right) $.
\end{remark}

It was listed in \cite[Eq. (23.2.22)]{Abramowitz-HMFFGMT-1972} that
\begin{subequations}
\begin{equation}
\beta \left( 2n+1\right) =\frac{\left( \pi /2\right) ^{2n+1}}{2\left(
2n\right) !}\left\vert E_{2n}\right\vert \text{ \ for }n\in \mathbb{N}_{0},
\label{b2n+1=}
\end{equation}%
where $E_{n}$ is the Euler numbers defined by
\end{subequations}
\begin{equation*}
\frac{1}{\cosh t}=\sum_{k=0}^{\infty }E_{n}\frac{t^{n}}{n!}\text{, \ }%
\left\vert t\right\vert <\frac{\pi }{2}.
\end{equation*}%
By the log-concavity of $x^{p}\zeta ^{\ast }\left( p,x\right) $ and
log-convexity of $\Gamma \left( p\right) x^{p}\zeta ^{\ast }\left(
p,x\right) /\Gamma \left( p-\theta _{0}\right) $ in $p$, we can derive the
bounds for the ratio of two non-zero neighboring Euler numbers.

\begin{corollary}
The double inequalities%
\begin{eqnarray*}
\frac{8}{\pi ^{2}}n\left( 2n-1\right)  &<&\frac{\left\vert E_{2n}\right\vert
}{\left\vert E_{2n-2}\right\vert }<n\left( 2n-1\right)  \\
\frac{1}{15}\left( 4n+1\right) \left( 4n-1\right)  &<&\frac{\left\vert
E_{2n}\right\vert }{\left\vert E_{2n-2}\right\vert }<\frac{1}{\pi ^{2}}%
\left( 4n+1\right) \left( 4n-1\right)
\end{eqnarray*}%
hold for $n\in \mathbb{N}$.
\end{corollary}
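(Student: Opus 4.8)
The plan is to reduce both double inequalities to sharp bounds for the single ratio $\beta(2n+1)/\beta(2n-1)$ of the Dirichlet beta function, and then read those bounds off from the log-concavity and log-convexity established above, specialized to $x=1/2$. First I would eliminate the Euler numbers using the closed form (\ref{b2n+1=}): since $\beta(2n+1)=\frac{(\pi/2)^{2n+1}}{2(2n)!}|E_{2n}|$, dividing the instance at $n$ by the one at $n-1$ and simplifying $(2n)!/(2n-2)!=2n(2n-1)$ gives
\begin{equation*}
\frac{|E_{2n}|}{|E_{2n-2}|}=\frac{8n(2n-1)}{\pi^{2}}\,\frac{\beta(2n+1)}{\beta(2n-1)}.
\end{equation*}
Thus everything reduces to bounding $\beta(2n+1)/\beta(2n-1)$, and the common prefactor $8n(2n-1)/\pi^{2}$ already explains the shape of both right-hand sides. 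Throughout I will use $\beta(s)=2^{s}\zeta^{\ast}(s,1/2)$, so that at $x=1/2$ one has $x^{s}\zeta^{\ast}(s,x)=4^{-s}\beta(s)$, together with the elementary data $\beta(1)=\pi/4$, $\beta(3)=\pi^{3}/32$, and $\lim_{s\to\infty}\beta(s)=1$.

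For the first double inequality I would invoke the log-concavity of $s\mapsto x^{s}\zeta^{\ast}(s,x)$ (Corollary \ref{C-P2} and Remark \ref{R2-CP2}) at $x=1/2$: log-concavity of $4^{-s}\beta(s)$ is equivalent to $\beta(s+2)/\beta(s)$ being decreasing in $s$. Evaluating at $s=2n-1$ with $n\ge1$ then yields $1<\beta(2n+1)/\beta(2n-1)\le\beta(3)/\beta(1)=\pi^{2}/8$, the lower bound coming from the limit $s\to\infty$ and the upper bound from the value at $n=1$. Multiplying by $8n(2n-1)/\pi^{2}$ gives precisely $\frac{8}{\pi^{2}}n(2n-1)<|E_{2n}|/|E_{2n-2}|\le n(2n-1)$.

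The second double inequality is the delicate part, and the key observation is to use the log-convexity of $s\mapsto\frac{\Gamma(s)}{\Gamma(s-\theta)}x^{s}\zeta^{\ast}(s,x)$ with the \emph{specific} choice $\theta=-1/2$; this is admissible because $-1/2\le\theta_{0}=-0.439\ldots$ lies in the log-convex regime, so the argument behind Corollary \ref{C-P2} and Remark \ref{R2-CP2} applies for this $\theta$ via Proposition \ref{P-gTd-cm-aHz}. At $x=1/2$ this makes $h(s):=\frac{\Gamma(s)}{\Gamma(s+1/2)}4^{-s}\beta(s)$ log-convex, hence $h(s+2)/h(s)$ increasing. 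The point of the half-integer shift is that the Gamma factors collapse cleanly: with $s=2n-1$ one has $\Gamma(s+2)/\Gamma(s)=(2n)(2n-1)$ and $\Gamma(s+1/2)/\Gamma(s+5/2)=1/[(2n+1/2)(2n-1/2)]=4/[(4n+1)(4n-1)]$, so that $h(2n+1)/h(2n-1)=\frac{\pi^{2}}{16}\cdot\frac{|E_{2n}|/|E_{2n-2}|}{(4n+1)(4n-1)}$. Therefore the quantity $|E_{2n}|/|E_{2n-2}|$ divided by $(4n+1)(4n-1)$ is increasing in $n$; evaluating it at $n=1$ (value $1/15$) and letting $n\to\infty$ (limit $1/\pi^{2}$) pinches it between these two constants, which is exactly the second pair of bounds.

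The main obstacle is precisely the recognition used in the last step: one must spot that $\theta=-1/2$ is simultaneously admissible (barely, since $-1/2<\theta_{0}=-0.439\ldots$) and the unique value that turns the rising-factorial Gamma quotient into $(4n+1)(4n-1)$, so that the clean constants $1/15$ and $1/\pi^{2}$ emerge at the endpoints. The remaining points are routine but worth checking: that log-concavity, respectively log-convexity, of $h$ is genuinely equivalent to monotonicity of the two-step ratio $h(s+2)/h(s)$, and that the endpoint evaluations use the correct closed forms $\beta(1)=\pi/4$ and $\beta(3)=\pi^{3}/32$ and the limit $\beta(s)\to1$. I would also remark that the lower bound in the first inequality and the upper bound in the second are attained only in the limit $n\to\infty$, while the other two bounds are attained at $n=1$, so the stated strict inequalities are in fact equalities at $n=1$ (a harmless abuse of notation).
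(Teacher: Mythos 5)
Your proof is correct, and it shares the paper's overall skeleton---eliminate the Euler numbers via (\ref{b2n+1=}) to reduce everything to $\beta(2n+1)/\beta(2n-1)$, then use the log-concavity of $\beta$ and the log-convexity of the Gamma-corrected function from Remark \ref{R2-CP2}---but it genuinely differs in the one step that produces the second double inequality. The paper works with $\theta=\theta_{0}=-0.439\ldots$ throughout: its increasing sequence yields the intermediate bounds
\begin{equation*}
\frac{\left( 2n-\theta _{0}-1\right) \left( 2n-\theta _{0}\right) }{\left(
\theta _{0}-1\right) \left( \theta _{0}-2\right) }<\frac{\left\vert
E_{2n}\right\vert }{\left\vert E_{2n-2}\right\vert }<\frac{4}{\pi ^{2}}
\left( 2n-\theta _{0}-1\right) \left( 2n-\theta _{0}\right) ,
\end{equation*}
and it then needs a second, purely algebraic comparison step (invoking $\theta_{0}>-1/2$ and, implicitly, monotonicity in $\theta$ of the quotient $\left(2n-\theta-1\right)\left(2n-\theta\right)/\left(\left(1-\theta\right)\left(2-\theta\right)\right)$) to convert the $\theta_{0}$-dependent quadratics into $(4n+1)(4n-1)/4$. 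You instead take $\theta=-1/2$ from the outset, which is admissible precisely as you say ($-1/2<\theta_{0}$ keeps you inside the log-convex regime of Proposition \ref{P-gTd-cm-aHz}), and the half-integer shift collapses the Gamma quotient exactly to $(4n+1)(4n-1)/4$; I checked your identity $h(2n+1)/h(2n-1)=\frac{\pi^{2}}{16}\,\frac{|E_{2n}|/|E_{2n-2}|}{(4n+1)(4n-1)}$ as well as the endpoint values $h(3)/h(1)=\pi^{2}/240$ and the limit $1/16$, and all are correct. Your route buys two things: it avoids the paper's extra comparison inequality (which the paper asserts rather briskly), and it yields a strictly stronger conclusion, namely that $\frac{|E_{2n}|/|E_{2n-2}|}{(4n+1)(4n-1)}$ increases from $1/15$ (at $n=1$) to $1/\pi^{2}$ (as $n\rightarrow\infty$), so both constants in the second double inequality are seen to be best possible in one stroke. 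Your closing remark is also accurate and worth keeping: at $n=1$ one has $|E_{2}|/|E_{0}|=1=n(2n-1)=\frac{1}{15}(4n+1)(4n-1)$, so the upper bound of the first inequality and the lower bound of the second hold with equality there; this imprecision in the strictness of the corollary's statement is present in the paper's own proof as well (its decreasing sequence also attains $\pi^{2}/8$ at $n=1$), and your proof makes it explicit rather than introducing it.
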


\begin{proof}
By Remark \ref{R2-CP2}, we see that the functions $\beta \left( p\right) $
and $\Gamma \left( p\right) \beta \left( p\right) /\Gamma \left( p-\theta
_{0}\right) $ are log-concave and log-convex on $\left( 0,\infty \right) $,
respectively. Then the sequences%
\begin{equation*}
\frac{\beta \left( 2n+1\right) }{\beta \left( 2n-1\right) }\text{ \ and \ }%
\frac{\Gamma \left( 2n+1\right) /\Gamma \left( 2n+1-\theta _{0}\right) }{%
\Gamma \left( 2n-1\right) /\Gamma \left( 2n-1-\theta _{0}\right) }\frac{%
\beta \left( 2n+1\right) }{\beta \left( 2n-1\right) }
\end{equation*}%
are decreasing and increasing for $n\in \mathbb{N}$, which imply that%
\begin{eqnarray*}
1 &<&\frac{\beta \left( 2n+1\right) }{\beta \left( 2n-1\right) }<\frac{\beta
\left( 3\right) }{\beta \left( 1\right) }=\frac{\pi ^{3}/32}{\pi /4}=\frac{%
\pi ^{2}}{8} \\
\frac{2}{\left( 1-\theta _{0}\right) \left( 2-\theta _{0}\right) }\frac{\pi
^{2}}{8} &<&\frac{2n\left( 2n-1\right) }{\left( 2n-\theta _{0}-1\right)
\left( 2n-\theta _{0}\right) }\frac{\beta \left( 2n+1\right) }{\beta \left(
2n-1\right) }<1.
\end{eqnarray*}%
Substituting (\ref{b2n+1=}) into the above inequalities then simplifying
gives%
\begin{eqnarray*}
\frac{8}{\pi ^{2}}n\left( 2n-1\right)  &<&\frac{\left\vert E_{2n}\right\vert
}{\left\vert E_{2n-2}\right\vert }<n\left( 2n-1\right) , \\
\frac{\left( 2n-\theta _{0}-1\right) \left( 2n-\theta _{0}\right) }{\left(
\theta _{0}-1\right) \left( \theta _{0}-2\right) } &<&\frac{\left\vert
E_{2n}\right\vert }{\left\vert E_{2n-2}\right\vert }<\frac{4}{\pi ^{2}}%
\left( 2n-\theta _{0}-1\right) \left( 2n-\theta _{0}\right)
\end{eqnarray*}%
Since $\theta _{0}=-0.439\,228...>-1/2$, we have $\left( 2n-\theta
_{0}\right) \left( 2n-\theta _{0}-1\right) <\left( 2n+1/2\right) \left(
2n-1/2\right) $ and%
\begin{equation*}
\frac{\left( 2n-\theta _{0}-1\right) \left( 2n-\theta _{0}\right) }{\left(
\theta _{0}-1\right) \left( \theta _{0}-2\right) }>\frac{4}{15}\left( 2n-%
\frac{1}{2}\right) \left( 2n+\frac{1}{2}\right) .
\end{equation*}%
Then the required double inequalities follow, and the proof is complete.
\end{proof}

\subsection{Application to the Tricomi function: Extension of \protect\cite[%
Proposition 6.1]{Yang-JMAA-551-2025}}

Consider the function%
\begin{equation}
U_{p}\left( x\right) =U_{p}\left( a,b,x\right) =\frac{1}{\Gamma \left(
a\right) }\int_{0}^{\infty }t^{p}t^{a-1}\left( 1+t\right) ^{b-a-1}e^{-xt}dt,
\label{Up}
\end{equation}%
where $p>-a$, $b\in \mathbb{R}$. Clearly, $U_{0}\left( x\right) =U_{0}\left(
a,b,x\right) $ is the confluent hypergeometric function of the second kind,
also named as Tricomi function. By the differentiation formula \cite[Eq.
(13.4.22)]{Abramowitz-HMFFGMT-1972}%
\begin{equation*}
\left( -1\right) ^{n}\frac{d^{n}}{dx^{n}}U\left( a,b,x\right) =\left(
a\right) _{n}U\left( a+n,b+n,x\right) ,
\end{equation*}%
we see that%
\begin{equation}
U_{n}\left( x\right) :=\left( -1\right) ^{n}U^{\left( n\right) }\left(
x\right) =\left( a\right) _{n}U\left( a+n,b+n,x\right)  \label{Un}
\end{equation}%
for $n\in \mathbb{N}_{0}$. For any $p>-a$, by (\ref{Up}), we have%
\begin{equation*}
U_{p}\left( x\right) =\frac{\Gamma \left( a+p\right) }{\Gamma \left(
a\right) }U\left( a+p,b+p,x\right) .
\end{equation*}%
Thus, by Theorem \ref{T1}, \cite[Proposition 6.1]{Yang-JMAA-551-2025} is
still true when replacing $\boldsymbol{p}_{\left[ k\right] },\boldsymbol{q}_{%
\left[ k\right] }\in \mathbb{N}_{0}^{k}$ by $\boldsymbol{p}_{\left[ k\right]
},\boldsymbol{q}_{\left[ k\right] }\in \left( -a,\infty \right) ^{k}$. For
the sake of completeness, we record it as follows.

\begin{proposition}
\label{P-U-cm} Let%
\begin{equation}
\mathcal{U}_{\rho }\left( x\right) =\prod_{j=1}^{n}U\left(
a+p_{j},b+p_{j},x\right) -\rho \prod_{j=1}^{n}U\left(
a+q_{j},b+q_{j},x\right) \text{.}  \label{Ur}
\end{equation}%
Suppose that the $n$-tuples $\boldsymbol{p}_{\left[ n\right] }=\left(
p_{1},...,p_{n}\right) $ and $\boldsymbol{q}_{\left[ n\right] }=\left(
q_{1},...,q_{n}\right) \in \left( -a,\infty \right) ^{n}$ for $n\geq 2$
satisfy $\boldsymbol{p}_{\left[ n\right] }\prec \boldsymbol{q}_{\left[ n%
\right] }$. Then the follows statements are valid.

(i) If $a>0$ and $b>1$ with $a-b+1>\left( <\right) 0$, then the function $%
x\mapsto +\left( -\right) \mathcal{U}_{\rho }\left( x\right) $ is CM on $%
\left( 0,\infty \right) $ if and only if%
\begin{equation*}
\rho \leq \left( \geq \right) \lambda _{n}^{\ast }=\prod\limits_{j=1}^{n}%
\frac{\Gamma \left( b+p_{j}-1\right) \Gamma \left( a+q_{j}\right) }{\Gamma
\left( b+q_{j}-1\right) \Gamma \left( a+p_{j}\right) }.
\end{equation*}
Consequently, the sharp inequality%
\begin{equation}
\lambda _{n}^{\ast }<\left( >\right) \prod_{j=1}^{n}\frac{U\left(
a+p_{j},b+p_{j},x\right) }{U\left( a+q_{j},b+q_{j},x\right) }
\label{PUpi/Uqi><lk*}
\end{equation}%
holds for $x>0$.

(ii) If $a>0$ and $a-b+1>\left( <\right) 0$, then the function $x\mapsto
-\left( +\right) \mathcal{U}_{\rho }\left( x\right) $ is CM on $\left(
0,\infty \right) $ if and only if $\rho \geq \left( \leq \right) 1$.
Therefore, the sharp inequality%
\begin{equation}
\prod_{j=1}^{n}\frac{U\left( a+p_{j},b+p_{j},x\right) }{U\left(
a+q_{j},b+q_{j},x\right) }<\left( >\right) 1  \label{PUpi/Uqi<>1}
\end{equation}%
holds for $x>0$.
\end{proposition}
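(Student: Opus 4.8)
The plan is to realize Proposition \ref{P-U-cm} as the special case of Theorem \ref{T1} attached to the weight $f(t)=t^{a-1}(1+t)^{b-a-1}/\Gamma(a)$. For $a>0$ this $f$ is positive on $(0,\infty)$, by (\ref{Up}) it gives $F_p=U_p$, and the integral $F_p(x)=\int_0^\infty t^{p+a-1}(1+t)^{b-a-1}e^{-xt}\,dt/\Gamma(a)$ converges on $(0,\infty)$ for every $p\in\mathbb{I}=(-a,\infty)$, so hypotheses (1)--(2) of Theorem \ref{T1} are readily checked. Pulling the constants out of the products via $U_p(x)=\Gamma(a+p)U(a+p,b+p,x)/\Gamma(a)$ yields
\begin{equation*}
\mathcal{D}_{\boldsymbol{p}_{[n]},\boldsymbol{q}_{[n]}}^{[F]}(x;\lambda_n)=\frac{\prod_{j=1}^{n}\Gamma(a+p_j)}{\Gamma(a)^{n}}\,\mathcal{U}_{\rho}(x),\qquad \rho=\lambda_n\prod_{j=1}^{n}\frac{\Gamma(a+q_j)}{\Gamma(a+p_j)}.
\end{equation*}
Since the prefactor is positive, $\pm\mathcal{U}_{\rho}$ is CM precisely when $\pm\mathcal{D}^{[F]}$ is, and on taking $\lambda_n=\lambda_n^{[\theta]}$ from (\ref{ln-thi}) the constant becomes $\rho=\prod_{j=1}^{n}\Gamma(p_j-\theta+1)\Gamma(a+q_j)/[\Gamma(q_j-\theta+1)\Gamma(a+p_j)]$.

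Next I would differentiate $\ln f_\theta(t)=(\theta+a-1)\ln t+(b-a-1)\ln(1+t)-\ln\Gamma(a)$ twice to obtain
\begin{equation*}
[\ln f_\theta(t)]''=-\frac{\theta+a-1}{t^{2}}-\frac{b-a-1}{(1+t)^{2}},
\end{equation*}
and then choose $\theta$ so as to kill one term. For part (i) I take $\theta=2-b$, whence $\theta+a-1=-(b-a-1)$ and $[\ln f_\theta(t)]''=(b-a-1)(1+2t)/[t^2(1+t)^2]$ carries the sign of $-(a-b+1)$; moreover $\lambda_n^{[2-b]}=\prod_j\Gamma(b+p_j-1)/\Gamma(b+q_j-1)$, so the associated constant is exactly $\rho=\lambda_n^{\ast}$. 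For part (ii) I take $\theta=1-a$, so $\theta+a-1=0$, $[\ln f_\theta(t)]''=(a-b+1)/(1+t)^2$ carries the sign of $a-b+1$, and $\lambda_n^{[1-a]}=\prod_j\Gamma(a+p_j)/\Gamma(a+q_j)$ forces $\rho=1$. In each part I must check admissibility $\theta<1+\min\{p_n,q_n\}$: for $\theta=1-a$ this is exactly $\min\{p_n,q_n\}>-a$, the hypothesis; for $\theta=2-b$ it reads $b+\min\{p_n,q_n\}>1$, i.e. $b+p_j,b+q_j>1$, the natural condition guaranteeing both that $\Gamma(b+p_j-1)$ is well defined and that the $x\to0^{+}$ asymptotics used below are valid. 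Feeding the sign of $[\ln f_\theta]''$ into Theorem \ref{T1} then gives the complete monotonicity of $+\mathcal{U}_{\lambda_n^{\ast}}$ or $-\mathcal{U}_{\lambda_n^{\ast}}$ in part (i), and of $+\mathcal{U}_{1}$ or $-\mathcal{U}_{1}$ in part (ii), according to the sign of $a-b+1$.

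To upgrade the critical constant to the stated ``iff'', I would use that each $U(a+q_j,b+q_j,x)\propto F_{q_j}$ is CM by Bernstein's theorem, hence so is their product; writing $+\mathcal{U}_{\rho}=+\mathcal{U}_{\lambda_n^{\ast}}+(\lambda_n^{\ast}-\rho)\prod_jU(a+q_j,b+q_j,x)$ shows $\rho\le\lambda_n^{\ast}$ is sufficient in the log-concave case, and symmetrically for the log-convex and for the part (ii) cases with $\lambda_n^{\ast}$ replaced by $1$. For necessity and sharpness I would invoke the asymptotics $U(c,d,x)\sim x^{-c}$ as $x\to\infty$ and $U(c,d,x)\sim\Gamma(d-1)x^{1-d}/\Gamma(c)$ as $x\to0^{+}$ (for $d>1$). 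Writing $R(x)=\prod_jU(a+p_j,b+p_j,x)/U(a+q_j,b+q_j,x)$ and using $\sum p_j=\sum q_j$, these give $R(x)\to1$ as $x\to\infty$ and $R(x)\to\lambda_n^{\ast}$ as $x\to0^{+}$. Since CM of $\pm\mathcal{U}_{\rho}$ forces $\pm(R(x)-\rho)\ge0$ for all $x>0$, comparison with the appropriate limit yields $\rho\le\lambda_n^{\ast}$ (resp. $\rho\ge\lambda_n^{\ast}$) in part (i) and $\rho\le1$ (resp. $\rho\ge1$) in part (ii), and simultaneously identifies $\lambda_n^{\ast}$ and $1$ as the sharp constants in (\ref{PUpi/Uqi><lk*}) and (\ref{PUpi/Uqi<>1}).

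The routine part is the application of Theorem \ref{T1} once the right $\theta$ is selected; the one place demanding care is the interplay of the parameter ranges with the two limits. The main obstacle I anticipate is the $x\to0^{+}$ analysis in part (i): it is legitimate only when $b+p_j>1$ and $b+q_j>1$, which is also exactly the admissibility condition $\theta=2-b<1+\min\{p_n,q_n\}$ needed to invoke Theorem \ref{T1}, so I would make this hypothesis explicit (reading $b>1$ in the statement as enforcing it) and keep the two uses of it in lockstep; the $x\to\infty$ limit driving part (ii) needs only $a>0$ and is unproblematic.
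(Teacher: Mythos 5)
Your proposal is correct and is essentially the paper's own argument: the paper establishes this proposition simply by applying Theorem \ref{T1} to the weight $f(t)=t^{a-1}(1+t)^{b-a-1}/\Gamma(a)$ via the relation $U_{p}(x)=\Gamma(a+p)U(a+p,b+p,x)/\Gamma(a)$, with exactly your two choices $\theta=2-b$ and $\theta=1-a$ (inherited from the earlier paper's Proposition 6.1) and the $x\to0^{+}$, $x\to\infty$ asymptotics of $U$ for the necessity and sharpness parts. Your additional observation that in part (i) the case $a-b+1>0$ tacitly requires $b+\min\{p_{n},q_{n}\}>1$ (so that $\theta=2-b$ is admissible and $\Gamma(b+p_{j}-1)$ is defined) is a point the paper glosses over, but it refines rather than changes the route.
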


\begin{remark}
Clearly, \cite[Corollaries 6.1--6.3]{Yang-JMAA-551-2025} are still valid.
\end{remark}

In a similar way to Corollaries \ref{C-P1} and \ref{C-P2}, we have the
following log-convexity property.

\begin{corollary}
Let $x>0$. The following statements are valid.

(i) If $a>0$ and $b>1$ with $a-b+1>\left( <\right) 0$, then the function%
\begin{equation*}
p\mapsto \frac{\Gamma \left( a+p\right) }{\Gamma \left( b+p-1\right) }%
U\left( a+p,b+p,x\right)
\end{equation*}
is log-concave (-convex) on $\left( -a,\infty \right) $.

(ii) If $a>0$ and $a-b+1>\left( <\right) 0$, then the function $p\mapsto
U\left( a+p,b+p,x\right) $ is log-convex (-concave) on $\left( -a,\infty
\right) $.
\end{corollary}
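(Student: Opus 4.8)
The plan is to imitate the derivations of Corollaries \ref{C-P1} and \ref{C-P2}: specialize Proposition \ref{P-U-cm} to the case $n=2$ with a midpoint majorization, read off a midpoint log-convexity inequality, and then upgrade it to genuine log-convexity by continuity. First I would note that for any $p,q\in(-a,\infty)$ one has the majorization $\left((p+q)/2,(p+q)/2\right)\prec(p,q)$ when $p\ge q$ and $\left((p+q)/2,(p+q)/2\right)\prec(q,p)$ when $q>p$, with all entries lying in $(-a,\infty)$; hence Proposition \ref{P-U-cm} is applicable with $n=2$ for this choice.

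For part (i), I would substitute $\boldsymbol{p}_{[2]}=\left((p+q)/2,(p+q)/2\right)$ and $\boldsymbol{q}_{[2]}=(p,q)$ into the sharp inequality (\ref{PUpi/Uqi><lk*}). A direct computation gives
\begin{equation*}
\lambda_2^\ast=\frac{\Gamma\left(b+(p+q)/2-1\right)^2}{\Gamma(b+p-1)\,\Gamma(b+q-1)}\cdot\frac{\Gamma(a+p)\,\Gamma(a+q)}{\Gamma\left(a+(p+q)/2\right)^2},
\end{equation*}
and after clearing denominators (\ref{PUpi/Uqi><lk*}) becomes, in the $a-b+1>(<)0$ case,
\begin{equation*}
g(p)\,g(q)<(>)\;g\!\left(\frac{p+q}{2}\right)^2,\qquad g(p):=\frac{\Gamma(a+p)}{\Gamma(b+p-1)}\,U(a+p,b+p,x),
\end{equation*}
which is exactly the midpoint log-concavity (-convexity) of $g$.

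For part (ii), I would feed the same two vectors into (\ref{PUpi/Uqi<>1}); now every gamma factor is absent and the inequality collapses directly to
\begin{equation*}
U\!\left(a+\frac{p+q}{2},b+\frac{p+q}{2},x\right)^2<(>)\;U(a+p,b+p,x)\,U(a+q,b+q,x),
\end{equation*}
i.e.\ the midpoint log-convexity (-concavity) of $p\mapsto U(a+p,b+p,x)$. Finally, since $U(a+p,b+p,x)$ is smooth in $p$ on $(-a,\infty)$, both maps are continuous, and a continuous midpoint log-convex (log-concave) function is log-convex (log-concave); this promotes the midpoint statements to the full ones claimed. I expect no genuine analytic obstacle here: the only care needed is bookkeeping, namely tracking the $>(<)$ convention through the rearrangement so that the inequality direction is correctly paired with the sign of $a-b+1$, and verifying that the gamma prefactor in $\lambda_2^\ast$ factors as the square of the midpoint value of $\Gamma(a+\cdot)/\Gamma(b+\cdot-1)$.
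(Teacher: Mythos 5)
Your proposal is correct and follows exactly the route the paper intends: the corollary is stated as following ``in a similar way to Corollaries \ref{C-P1} and \ref{C-P2},'' which means specializing the sharp inequalities (\ref{PUpi/Uqi><lk*}) and (\ref{PUpi/Uqi<>1}) of Proposition \ref{P-U-cm} to the midpoint majorization $\left((p+q)/2,(p+q)/2\right)\prec (p,q)$ and then promoting the resulting Tur\'{a}n-type midpoint inequalities to full log-concavity/log-convexity via continuity of $U(a+p,b+p,x)$ in $p$. Your computation of $\lambda_{2}^{\ast }$, the factorization into the square of the midpoint value of $\Gamma (a+\cdot )/\Gamma (b+\cdot -1)$, and the pairing of the inequality directions with the sign of $a-b+1$ are all accurate.
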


\subsection{Application to the Mills ratio}

Let $\phi \left( x\right) =e^{-x^{2}/2}/\sqrt{2\pi }$ be the density
function of a standard Gaussian law and $\Phi \left( x\right) =\int_{-\infty
}^{x}\phi \left( t\right) dt$ is its cumulative distribution function. The
function%
\begin{equation*}
R\left( x\right) =\frac{1-\Phi \left( x\right) }{\phi \left( x\right) }%
=e^{x^{2}/2}\int_{x}^{\infty }e^{-t^{2}/2}dt=\int_{0}^{\infty
}e^{-s^{2}/2}e^{-xs}ds,\text{, \ }x\geq 0
\end{equation*}%
is called Mills ratio \cite{Mills-B-18-1926}. Several relevant results for
the Mills ratio can refer to \cite{Kouba-arXiv-0607694v1}, \cite%
{Baricz-JMAA-340-2008}, \cite{Gasull-JMAA-420-2014}, \cite%
{From-JMAA-486-2020}, \cite[Theorem 1]{Yang-SP-66-2025}.

As the fourth application of Theorem \ref{T1}, we consider the function%
\begin{equation}
R_{p}\left( x\right) =\int_{0}^{\infty }t^{p}e^{-t^{2}/2}e^{-xt}dt\text{ \
for }x>0\text{ and }p>-1,  \label{Rp}
\end{equation}%
which satisfies%
\begin{equation*}
\left( -1\right) ^{p-1}R^{\left( p\right) }\left( x\right) =R_{p}\left(
x\right) \text{ \ for }p\in \mathbb{N}_{0}\text{.}
\end{equation*}%
Therefore, the function $R_{p}\left( x\right) $ can also be called
\textquotedblleft Mills ratio of order $p$\textquotedblright .

We claim that $R_{p}\left( x\right) $ converges on $\left( 0,\infty \right) $
for $p>-1$. In fact, from the inequality%
\begin{equation*}
R_{p}\left( x\right) =\int_{0}^{\infty
}t^{p}e^{-t^{2}/2}e^{-xt}dt<\int_{0}^{\infty }t^{p}e^{-xt}dt
\end{equation*}%
and the fact that $\int_{0}^{\infty }t^{p}e^{-xt}dt=\Gamma \left( p+1\right)
/x^{p+1}$ converges on $\left( 0,\infty \right) $ for $p>-1$, the
convergence of $R_{p}\left( x\right) $ on $\left( 0,\infty \right) $ for $%
p>-1$ follows.

Now, we give the asymptotic behavior of $R_{p}\left( x\right) $ when $x$
approaches zero or is sufficiently large, which is needed to prove
Proposition \ref{P-Rp-cm}.

\begin{lemma}
Let $p>-1$. (i) The function $R_{p}\left( x\right) $ defined by (\ref{Rp})
has the following power series representation:%
\begin{equation}
R_{p}\left( x\right) =\sum_{k=0}^{\infty }\frac{\left( -1\right)
^{k}2^{\left( k+p-1\right) /2}}{k!}\Gamma \left( \frac{k+p+1}{2}\right) x^{k}%
\text{ \ for }x>0\text{.}  \label{Rp-psr}
\end{equation}%
(ii) The function $R_{p}\left( x\right) $ has the asymptotic expansion:%
\begin{equation}
R_{p}\left( x\right) \sim \sum_{k=0}^{\infty }\frac{\left( -1\right)
^{k}\Gamma \left( 2k+p+1\right) }{2^{k}k!x^{2k+p+1}}\text{ \ as }%
x\rightarrow \infty \text{.}  \label{Rp-af-00}
\end{equation}
\end{lemma}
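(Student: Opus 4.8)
The plan is to handle the two parts by parallel but distinct devices: for (i) I would expand the Laplace kernel $e^{-xt}$ and integrate term by term, whereas for (ii) I would expand the slowly varying factor $t^p e^{-t^2/2}$ near $t=0^+$ and invoke Watson's lemma.

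For part (i), I would substitute the everywhere-convergent Maclaurin series $e^{-xt}=\sum_{k=0}^\infty (-1)^k x^k t^k/k!$ into (\ref{Rp}) and interchange summation and integration to obtain
\[
R_p(x)=\sum_{k=0}^\infty \frac{(-1)^k x^k}{k!}\int_0^\infty t^{k+p}e^{-t^2/2}\,dt.
\]
The interchange is justified by Tonelli's theorem applied to the nonnegative integrand $\sum_k x^k t^{k+p}e^{-t^2/2}/k!=t^p e^{xt}e^{-t^2/2}$, whose integral over $(0,\infty)$ is finite for every $x>0$ because the Gaussian factor overwhelms $e^{xt}$. The remaining moment integral is then evaluated by the substitution $u=t^2/2$, which recasts it as $2^{(k+p-1)/2}\int_0^\infty u^{(k+p-1)/2}e^{-u}\,du=2^{(k+p-1)/2}\,\Gamma\!\big((k+p+1)/2\big)$, the hypothesis $p>-1$ ensuring convergence at $u=0$. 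Feeding this back into the sum reproduces (\ref{Rp-psr}) verbatim, and a Stirling estimate of the coefficients shows the series has infinite radius of convergence, so the representation is valid for all $x>0$.

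For part (ii), I would write $R_p(x)=\int_0^\infty g(t)e^{-xt}\,dt$ with $g(t)=t^p e^{-t^2/2}$ and apply Watson's lemma. Its hypotheses are immediate to verify: $g$ is continuous on $(0,\infty)$, the Laplace integral converges (indeed $\int_0^\infty t^p e^{-t^2/2}\,dt<\infty$ for $p>-1$), and near the origin $g$ admits the convergent expansion $g(t)=\sum_{k=0}^\infty \frac{(-1)^k}{2^k k!}\,t^{2k+p}$ inherited from the Maclaurin series of $e^{-t^2/2}$, whose leading exponent $p$ exceeds $-1$. Watson's lemma then replaces each power $t^{2k+p}$ by $\Gamma(2k+p+1)/x^{2k+p+1}$, which is exactly (\ref{Rp-af-00}).

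The computations are routine, so the only places demanding care are the two limiting interchanges. In (i) the delicacy is that the infinite series is integrated against the kernel; I would therefore rely on the positivity of the terms (after taking absolute values) to invoke Tonelli directly rather than dominated convergence. In (ii) the sole obstacle is confirming that $g(t)=t^p e^{-t^2/2}$ fits the regularity and integrability premises of Watson's lemma, and the super-exponential decay of the Gaussian makes this trivial; consequently no genuine difficulty arises beyond tracking the arguments of the Gamma functions.
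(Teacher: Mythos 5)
Your proposal is correct and takes essentially the same route as the paper's own proof: part (i) by expanding $e^{-xt}$, interchanging sum and integral, and evaluating the Gaussian moment $\int_0^\infty t^{k+p}e^{-t^2/2}\,dt=2^{(k+p-1)/2}\Gamma\left(\frac{k+p+1}{2}\right)$ via the substitution $u=t^2/2$ (the paper uses $t=\sqrt{2s}$, which is the same change of variables), and part (ii) by applying Watson's lemma to $g(t)=t^pe^{-t^2/2}$ with the small-$t$ expansion $\sum_{k\geq 0}\frac{(-1)^k}{2^k k!}t^{2k+p}$. Your additional justifications (Tonelli/Fubini for the interchange, a Stirling estimate for the infinite radius of convergence) merely make explicit what the paper treats as routine, so there is nothing further to flag.
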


\begin{proof}
(i) Expanding in power series gives%
\begin{eqnarray*}
R_{p}\left( x\right)  &=&\int_{0}^{\infty }\left(
t^{p}e^{-t^{2}/2}\sum_{k=0}^{\infty }\left( \frac{\left( -1\right) ^{k}}{k!}%
t^{k}x^{k}\right) \right) dt=\sum_{k=0}^{\infty }\frac{\left( -1\right) ^{k}%
}{k!}\left( \int_{0}^{\infty }t^{k+p}e^{-t^{2}/2}dt\right) x^{k} \\
&=&\sum_{k=0}^{\infty }\frac{\left( -1\right) ^{k}}{k!}2^{\left(
k+p-1\right) /2}\Gamma \left( \frac{k+p+1}{2}\right) x^{k},
\end{eqnarray*}%
where the last equality follows from%
\begin{equation*}
\int_{0}^{\infty }t^{k+p}e^{-t^{2}/2}dt\overset{t=\sqrt{2s}}{=\!=\!=\!=\!=}%
\frac{1}{\sqrt{2}}\int_{0}^{\infty }2^{\left( k+p\right) /2}s^{\left(
k+p-1\right) /2}e^{-s}ds=2^{\left( k+p-1\right) /2}\Gamma \left( \frac{k+p+1%
}{2}\right) .
\end{equation*}%
It is easy to verify that the power series (\ref{Rp-psr}) converges on $%
\left( 0,\infty \right) $ for $p>-1$.

(ii) Since%
\begin{equation*}
t^{p}e^{-t^{2}/2}=\sum_{k=0}^{\infty }\frac{\left( -1\right) ^{k}}{2^{k}k!}%
t^{2k+p}=\sum_{k=0}^{\infty }\frac{\left( -1\right) ^{k}}{2^{k}k!}t^{\left(
k+\lambda -\mu \right) /\mu }\text{, \ as }t\rightarrow 0^{+},
\end{equation*}%
where $\lambda =\left( p+1\right) /2$ and $\mu =1/2$, by Watson's Lemma \cite%
[\S 2.3(ii)]{NIST-DLMF-Release 1.2.2}, it follows that%
\begin{equation*}
R_{p}\left( x\right) =\int_{0}^{\infty }t^{p}e^{-t^{2}/2}e^{-xt}dt\sim
\sum_{k=0}^{\infty }\Gamma \left( \frac{k+\lambda }{\mu }\right) \frac{%
\left( -1\right) ^{k}/\left( 2^{k}k!\right) }{x^{\left( k+\lambda \right)
/\mu }}=\sum_{k=0}^{\infty }\frac{\left( -1\right) ^{k}\Gamma \left(
2k+p+1\right) }{2^{k}k!x^{2k+p+1}}
\end{equation*}%
as $x\rightarrow \infty $, which completes the proof.
\end{proof}

\begin{remark}
If $p=n\in \mathbb{N}_{0}$, then the power series representation (\ref%
{Rp-psr}) reduces to%
\begin{equation*}
R_{n}\left( x\right) =\left( -1\right) ^{n}R^{\left( n\right) }\left(
x\right) =\sum_{k=0}^{\infty }\frac{\left( -1\right) ^{k}2^{\left(
k+n-1\right) /2}}{k!}\Gamma \left( \frac{k+n+1}{2}\right) x^{k},
\end{equation*}
and the asymptotic expansion (\ref{Rp-af-00}) reduces to \cite[Eq. (1.3)]%
{Yang-SP-66-2025}.
\end{remark}

Now let $f\left( t\right) =e^{-t^{2}/2}$. Obviously, the function $f_{\theta
}\left( t\right) =t^{\theta }f\left( t\right) =t^{\theta }e^{-t^{2}/2}$ is
log-concave on $\left( 0,\infty \right) $ if and only if $\theta \geq 0$,
and is log-convex on $\left( 0,\infty \right) $ if and only if $\theta
\rightarrow -\infty $. By Theorem \ref{T1} and Remark \ref{R-T1}, we have
the following proposition.

\begin{proposition}
\label{P-Rp-cm}Let $R_{p}\left( x\right) $ for $p>-1$ be defined by (\ref{Rp}%
). If the $n$-tuples $\boldsymbol{p}_{\left[ n\right] }=\left(
p_{1},...,p_{k}\right) $ and $\boldsymbol{q}_{\left[ n\right] }=\left(
q_{1},...,q_{k}\right) \in \left( -1,\infty \right) ^{n}$ for $n\geq 2$
satisfy $\boldsymbol{p}_{\left[ n\right] }\prec \boldsymbol{q}_{\left[ n%
\right] }$, then the function%
\begin{equation*}
x\mapsto \mathcal{D}_{\boldsymbol{p}_{\left[ n\right] },\boldsymbol{q}_{%
\left[ n\right] }}\left( x;\lambda _{n}\right)
=\prod_{j=1}^{n}R_{p_{j}}\left( x\right) -\lambda
_{n}\prod_{j=1}^{n}R_{q_{j}}\left( x\right)
\end{equation*}%
is CM on $\left( 0,\infty \right) $ if and only if%
\begin{equation*}
\lambda _{n}\leq \lambda _{n}^{\left[ 0\right] }=\prod_{j=1}^{n}\frac{\Gamma
\left( p_{j}+1\right) }{\Gamma \left( q_{j}+1\right) };
\end{equation*}%
while $x\mapsto -\mathcal{D}_{\boldsymbol{p}_{\left[ n\right] },\boldsymbol{q%
}_{\left[ n\right] }}\left( x;\lambda _{n}\right) $ is CM on $\left(
0,\infty \right) $ if $\lambda _{n}\geq 1$. Consequently, the inequality%
\begin{equation}
\prod_{j=1}^{n}\frac{\Gamma \left( p_{j}+1\right) }{\Gamma \left(
q_{j}+1\right) }<\prod_{j=1}^{n}\frac{R_{p_{j}}\left( x\right) }{%
R_{q_{j}}\left( x\right) }<1  \label{pRpj/Rqj<>}
\end{equation}%
holds for $x>0$, where lower bound is sharp.
\end{proposition}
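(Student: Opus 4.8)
The plan is to derive everything directly from Theorem~\ref{T1} and Remark~\ref{R-T1}, applied to $f(t)=e^{-t^2/2}$ on the interval $\mathbb{I}=(-1,\infty)$, using the three facts recorded just above the statement: $R_p=F_p$ converges on $(0,\infty)$ for $p>-1$; $f_\theta(t)=t^\theta e^{-t^2/2}$ is log-concave on $(0,\infty)$ precisely when $\theta\ge 0$; and $f_\theta$ is log-convex in the limit $\theta\to-\infty$. Since $p_n,q_n>-1$ forces $0<1+\min\{p_n,q_n\}$, the value $\theta=0$ is admissible, and Theorem~\ref{T1} then gives that $x\mapsto\mathcal{D}_{\boldsymbol{p}_{[n]},\boldsymbol{q}_{[n]}}(x;\lambda_n^{[0]})$ is CM with $\lambda_n^{[0]}=\prod_{j=1}^n\Gamma(p_j+1)/\Gamma(q_j+1)$; dually, passing to $\theta\to-\infty$ in Remark~\ref{R-T1} (where $\lambda_n^{[\theta]}\to1$) yields that $x\mapsto-\mathcal{D}_{\boldsymbol{p}_{[n]},\boldsymbol{q}_{[n]}}(x;1)$ is CM. These are the two anchors of the whole argument.

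For the sufficiency parts of the two conditionals I would only invoke closure of the CM class under nonnegative linear combinations together with Bernstein's theorem (each $R_{q_j}$, hence $\prod_j R_{q_j}$, is CM since $t^{q_j}e^{-t^2/2}\ge0$). Concretely, for $\lambda_n\le\lambda_n^{[0]}$ I write
\begin{equation*}
\mathcal{D}_{\boldsymbol{p}_{[n]},\boldsymbol{q}_{[n]}}(x;\lambda_n)=\mathcal{D}_{\boldsymbol{p}_{[n]},\boldsymbol{q}_{[n]}}(x;\lambda_n^{[0]})+(\lambda_n^{[0]}-\lambda_n)\prod_{j=1}^nR_{q_j}(x),
\end{equation*}
a sum of CM functions, and for $\lambda_n\ge1$ I write
\begin{equation*}
-\mathcal{D}_{\boldsymbol{p}_{[n]},\boldsymbol{q}_{[n]}}(x;\lambda_n)=(\lambda_n-1)\prod_{j=1}^nR_{q_j}(x)-\mathcal{D}_{\boldsymbol{p}_{[n]},\boldsymbol{q}_{[n]}}(x;1),
\end{equation*}
again a sum of CM functions.

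The necessity half of the first equivalence is where the asymptotics enter. If $\mathcal{D}_{\boldsymbol{p}_{[n]},\boldsymbol{q}_{[n]}}(x;\lambda_n)$ is CM it is in particular nonnegative, so $\prod_jR_{p_j}(x)/\prod_jR_{q_j}(x)\ge\lambda_n$ for all $x>0$. From the expansion (\ref{Rp-af-00}) one has $R_p(x)\sim\Gamma(p+1)/x^{p+1}$ as $x\to\infty$, whence
\begin{equation*}
\prod_{j=1}^n\frac{R_{p_j}(x)}{R_{q_j}(x)}\sim\Bigl(\prod_{j=1}^n\frac{\Gamma(p_j+1)}{\Gamma(q_j+1)}\Bigr)x^{\sum_{j=1}^n(q_j-p_j)}=\lambda_n^{[0]},
\end{equation*}
the exponent vanishing because $\sum_jp_j=\sum_jq_j$ under $\boldsymbol{p}_{[n]}\prec\boldsymbol{q}_{[n]}$. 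Letting $x\to\infty$ forces $\lambda_n\le\lambda_n^{[0]}$, closing the equivalence. The double inequality (\ref{pRpj/Rqj<>}) is then obtained by specializing the two anchors to $\lambda_n=\lambda_n^{[0]}$ and $\lambda_n=1$ and reading off positivity; strictness follows from $\boldsymbol{p}_{[n]}\ne\boldsymbol{q}_{[n]}$ (so the two products do not coincide), and the same $x\to\infty$ computation shows the lower bound $\lambda_n^{[0]}$ is sharp.

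I expect no deep obstacle: the proof is an assembly of Theorem~\ref{T1}, Remark~\ref{R-T1}, Bernstein's theorem, and the limit read from (\ref{Rp-af-00}). The only point demanding a little care is checking hypotheses (1) and (2) of Theorem~\ref{T1} for this $f$: growth at infinity is harmless since $t^pe^{-t^2/2}$ decays, and the integrable singularity of $t^pe^{-t^2/2}$ at $t=0$ for $-1<p<0$ is exactly what the already-verified convergence of $R_p$ accounts for, so one restricts $\mathbb{I}$ to a bounded subinterval of $(-1,\infty)$ containing the finitely many entries $p_j,q_j$.
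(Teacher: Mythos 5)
Your proposal is correct and follows essentially the same route as the paper's own proof: the same two anchors (Theorem~\ref{T1} with $\theta=0$ for $\mathcal{D}(x;\lambda_n^{[0]})$ and Remark~\ref{R-T1} with $\theta\to-\infty$ for $-\mathcal{D}(x;1)$), the identical decomposition $\mathcal{D}(x;\lambda_n)=\mathcal{D}(x;\lambda_n^{[0]})+(\lambda_n^{[0]}-\lambda_n)\prod_{j=1}^{n}R_{q_j}(x)$ for sufficiency, and the same $x\to\infty$ limit extracted from (\ref{Rp-af-00}) for the necessity and the sharpness of the lower bound. Your explicit formula for the $\lambda_n\geq 1$ case and your verification of hypotheses (1)--(2) of Theorem~\ref{T1} merely spell out steps the paper disposes of with ``in a similar way.''
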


\begin{proof}
(i) The necessity follows from $\mathcal{D}_{\boldsymbol{p}_{\left[ n\right]
},\boldsymbol{q}_{\left[ n\right] }}\left( x;\lambda _{n}\right) \geq 0$ for
all $x>0$, which implies that%
\begin{equation}
\lambda _{n}\leq \lim_{x\rightarrow \infty }\prod_{j=1}^{n}\frac{%
R_{p_{j}}\left( x\right) }{R_{q_{j}}\left( x\right) }=\prod_{j=1}^{n}\frac{%
\Gamma \left( p_{j}+1\right) }{\Gamma \left( q_{j}+1\right) }=\lambda _{n}^{%
\left[ 0\right] },  \label{ln<00}
\end{equation}%
where we have used asymptotic expansion (\ref{Rp-af-00}).

To prove the sufficiency, we write%
\begin{equation*}
\mathcal{D}_{\boldsymbol{p}_{\left[ n\right] },\boldsymbol{q}_{\left[ n%
\right] }}\left( x;\lambda _{n}\right) =\mathcal{D}_{\boldsymbol{p}_{\left[ n%
\right] },\boldsymbol{q}_{\left[ n\right] }}\left( x;\lambda _{n}^{\left[ 0%
\right] }\right) +\left( \lambda _{n}^{\left[ 0\right] }-\lambda _{n}\right)
\prod_{j=1}^{n}R_{q_{j}}\left( x\right) .
\end{equation*}%
Since $\mathcal{D}_{\boldsymbol{p}_{\left[ n\right] },\boldsymbol{q}_{\left[
n\right] }}\left( x;\lambda _{n}^{\left[ 0\right] }\right) $ is CM on $%
\left( 0,\infty \right) $ by Theorem \ref{T1} and $R_{q_{j}}\left( x\right) $
is CM on $\left( 0,\infty \right) $, so is $\mathcal{D}_{\boldsymbol{p}_{%
\left[ n\right] },\boldsymbol{q}_{\left[ n\right] }}\left( x;\lambda
_{n}\right) $ on $\left( 0,\infty \right) $.

(ii) In a similar way, we can prove $x\mapsto -\mathcal{D}_{\boldsymbol{p}_{%
\left[ n\right] },\boldsymbol{q}_{\left[ n\right] }}\left( x;\lambda
_{n}\right) $ is CM on $\left( 0,\infty \right) $ if $\lambda _{n}\geq 1$.

(iii) The required double inequality follows from the inequalities $\mathcal{%
D}_{\boldsymbol{p}_{\left[ n\right] },\boldsymbol{q}_{\left[ n\right]
}}\left( x;\lambda _{n}^{\left[ 0\right] }\right) >0$ and $-\mathcal{D}_{%
\boldsymbol{p}_{\left[ n\right] },\boldsymbol{q}_{\left[ n\right] }}\left(
x;1\right) >0$ for all $x>0$. Due to the limit relation in (\ref{ln<00}),
the lower bound is sharp. This completes the proof.
\end{proof}

\begin{remark}
Taking $\left( p_{1},p_{2}\right) =\left( n,n\right) $ and $\left(
q_{1},q_{2}\right) =\left( n+1,n-1\right) $ for $n\in \mathbb{N}$ in (\ref%
{pRpj/Rqj<>}) yields \cite[Theorem 3.2]{Baricz-JMAA-340-2008}.
\end{remark}

In a similar way to Corollaries \ref{C-P1} and \ref{C-P2}, we have the
following log-convexity property.

\begin{corollary}
For fixed $x>0$, the functions $p\mapsto R_{p}\left( x\right) $ and $%
p\mapsto R_{p}\left( x\right) /\Gamma \left( p+1\right) $ are log-convex and
log-concave on $\left( -1,\infty \right) $, respectively.
\end{corollary}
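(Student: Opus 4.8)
The plan is to mirror the derivations of Corollaries \ref{C-P1} and \ref{C-P2}: specialize the double inequality (\ref{pRpj/Rqj<>}) of Proposition \ref{P-Rp-cm} to $n=2$ with a symmetric majorizing pair, read off the resulting Tur\'{a}n-type inequality, and reinterpret it as midpoint log-convexity and log-concavity.

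First I would fix $x>0$ and take arbitrary $p,q\in\left(-1,\infty\right)$ with, say, $p\geq q$. Setting $\boldsymbol{p}_{\left[2\right]}=\left(\left(p+q\right)/2,\left(p+q\right)/2\right)$ and $\boldsymbol{q}_{\left[2\right]}=\left(p,q\right)$, the two vectors share the common sum $p+q$, both are weakly decreasing, and the single partial-sum condition reads $\left(p+q\right)/2\leq p$, which holds since $p\geq q$; hence $\boldsymbol{p}_{\left[2\right]}\prec\boldsymbol{q}_{\left[2\right]}$. Because $p,q>-1$ forces $\left(p+q\right)/2>-1$, all four parameters lie in $\left(-1,\infty\right)$, so Proposition \ref{P-Rp-cm} is applicable. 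Substituting into (\ref{pRpj/Rqj<>}) then produces the Tur\'{a}n-type chain
\begin{equation*}
\frac{\Gamma\left(\left(p+q\right)/2+1\right)^{2}}{\Gamma\left(p+1\right)\Gamma\left(q+1\right)}<\frac{R_{\left(p+q\right)/2}\left(x\right)^{2}}{R_{p}\left(x\right)R_{q}\left(x\right)}<1.
\end{equation*}

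The right-hand inequality is precisely $R_{\left(p+q\right)/2}\left(x\right)^{2}<R_{p}\left(x\right)R_{q}\left(x\right)$, i.e.\ the midpoint log-convexity inequality for $p\mapsto R_{p}\left(x\right)$; note that $R_{p}\left(x\right)>0$ for $x>0$, so the logarithm is well defined. Rearranging the left-hand inequality gives
\begin{equation*}
\frac{R_{p}\left(x\right)}{\Gamma\left(p+1\right)}\frac{R_{q}\left(x\right)}{\Gamma\left(q+1\right)}<\left(\frac{R_{\left(p+q\right)/2}\left(x\right)}{\Gamma\left(\left(p+q\right)/2+1\right)}\right)^{2},
\end{equation*}
which is exactly the midpoint log-concavity inequality for $p\mapsto R_{p}\left(x\right)/\Gamma\left(p+1\right)$ on $\left(-1,\infty\right)$.

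Finally I would upgrade midpoint to full convexity and concavity. Since the integral representation (\ref{Rp}) permits differentiation under the integral sign (the integrand is dominated by an integrable majorant on each compact $p$-interval inside $\left(-1,\infty\right)$), the map $p\mapsto R_{p}\left(x\right)$ is continuous—indeed smooth—on $\left(-1,\infty\right)$ for each fixed $x>0$, and $\Gamma\left(p+1\right)$ is continuous and positive there. Hence both $p\mapsto\ln R_{p}\left(x\right)$ and $p\mapsto\ln\left(R_{p}\left(x\right)/\Gamma\left(p+1\right)\right)$ are continuous; a continuous midpoint-convex (respectively midpoint-concave) function is convex (respectively concave), yielding the two asserted log-convexity properties. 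The only point requiring care is this last continuity-based upgrade, but it is immediate from (\ref{Rp}); everything else is a direct specialization of Proposition \ref{P-Rp-cm}.
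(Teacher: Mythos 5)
Your proposal is correct and takes essentially the same route as the paper, which proves this corollary ``in a similar way to Corollaries \ref{C-P1} and \ref{C-P2},'' i.e., exactly by specializing the double inequality (\ref{pRpj/Rqj<>}) of Proposition \ref{P-Rp-cm} to the majorizing pair $\left( \left( p+q\right) /2,\left( p+q\right) /2\right) \prec \left( p,q\right) $ and reading off the resulting Tur\'{a}n-type inequalities as midpoint log-convexity of $R_{p}\left( x\right) $ and midpoint log-concavity of $R_{p}\left( x\right) /\Gamma \left( p+1\right) $. Your explicit continuity argument upgrading midpoint convexity/concavity to the full property (via dominated convergence in (\ref{Rp})) is a detail the paper leaves implicit, and you handle it correctly.
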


\begin{remark}
Recently, Yang and Tian \cite[Theorem 1]{Yang-SP-66-2025} proved that the
double inequality%
\begin{equation}
\frac{\Gamma \left( n/2\right) \Gamma \left( n/2+1\right) }{\Gamma \left(
n/2+1/2\right) ^{2}}<\frac{R_{n-1}\left( x\right) R_{n+1}\left( x\right) }{%
R_{n}\left( x\right) ^{2}}<\frac{n+1}{n}  \label{Y-I1}
\end{equation}%
holds for $x>0$ and $n\in \mathbb{N}$. This implies that the sequence $%
n\mapsto R_{n}\left( x\right) /\Gamma \left( n/2+1/2\right) $ is log-convex
for $n\in \mathbb{N}$. Inspired by this, we guess that the function%
\begin{equation*}
p\mapsto \frac{1}{\Gamma \left( p/2+1/2\right) }R_{p}\left( x\right)
\end{equation*}%
is log-convex on $\left( -1,\infty \right) $.
\end{remark}

\section{Conclusions}

In this paper, we extended \cite[Theorem 1.1]{Yang-JMAA-551-2025} to Theorem %
\ref{T1} and corrected the errors in the proof of \cite[Theorem 1.1]%
{Yang-JMAA-551-2025}. Precisely, Theorem \ref{T1} holds not only for the $n$%
-tuples $\boldsymbol{p}_{\left[ n\right] }=\left( p_{1},...,p_{n}\right) $
and $\boldsymbol{q}_{\left[ n\right] }=\left( q_{1},...,q_{n}\right) \in
\mathbb{N}_{0}^{n}$ but also for $\boldsymbol{p}_{\left[ n\right] },%
\boldsymbol{q}_{\left[ n\right] }\in \mathbb{I}^{n}$, where $\mathbb{I}%
\subseteq \mathbb{R}$ is an interval. Correspondingly, \cite[Propositions
4.1, 5.1 and 6.1]{Yang-JMAA-551-2025} are also valid for $\boldsymbol{p}_{%
\left[ n\right] },\boldsymbol{q}_{\left[ n\right] }\in \left( 1,\infty
\right) ^{n}$, $\left( 0,\infty \right) ^{n}$ and $\left( -a,\infty \right)
^{n}$, respectively. In other words, \cite[Propositions 4.1, 5.1 and 6.1]%
{Yang-JMAA-551-2025} have be extended to Propositions \ref{P-gTd-cm-Hz}, \ref%
{P-gTd-cm-aHz} and \ref{P-U-cm}, respectively, where Propositions \ref%
{P-gTd-cm-Hz} and \ref{P-gTd-cm-aHz} provide applications of Theorem \ref{T1}
in the Hurwitz zeta and alternating Hurwitz zeta functions. Moreover,
Proposition \ref{P-Rp-cm} offers an application in probability theory.

Finally, we emphasize that Lemmas \ref{L-p,qn*,p,q2'} and \ref{Crit} play
important role in the proof of Theorem \ref{T1}, which can also be used to
deal with a class of problems involving the majorization of vectors. As an
example, we give a new proof of the following Hardy-Littlewood-Polya
inequality by Lemma \ref{L-p,qn*,p,q2'}.

\begin{theorem}[{\protect\cite[Theorem 108]{Hardy-I-CUP-1952}}]
Let $\phi $ be a continuous convex on the interval $I$. If the $n$-tuples $%
\boldsymbol{p}_{\left[ n\right] }=\left( p_{1},...,p_{n}\right) $ and $%
\boldsymbol{q}_{\left[ n\right] }=\left( q_{1},...,q_{n}\right) \in \mathbb{I%
}^{n}$ satisfy $\boldsymbol{p}_{\left[ n\right] }\prec \boldsymbol{q}_{\left[
n\right] }$, then%
\begin{equation*}
\sum_{j=1}^{n}\phi \left( p_{j}\right) \leq \sum_{j=1}^{n}\phi \left(
q_{j}\right) .
\end{equation*}
\end{theorem}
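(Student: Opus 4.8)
The plan is to prove the inequality by induction on $n$, mirroring exactly the inductive architecture of the proof of Theorem \ref{T1} but with the ``product minus product'' structure replaced by the scalar quantity $\sum_{j=1}^{n}\phi(q_j)-\sum_{j=1}^{n}\phi(p_j)$, whose nonnegativity is the assertion. The base case is $n=2$: here $\boldsymbol{p}_{[2]}\prec\boldsymbol{q}_{[2]}$ forces $p_1+p_2=q_1+q_2$ and, since $\sum_{j=1}^{1}q_j\ge\sum_{j=1}^{1}p_j$, the ordering $q_1\ge p_1\ge p_2\ge q_2$. Thus $p_1=\lambda q_1+(1-\lambda)q_2$ and $p_2=(1-\lambda)q_1+\lambda q_2$ for a single $\lambda\in[0,1]$, and adding the two convexity estimates for $\phi(p_1)$ and $\phi(p_2)$ yields $\phi(p_1)+\phi(p_2)\le\phi(q_1)+\phi(q_2)$. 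This two-point Jensen step is the only place convexity of $\phi$ enters.

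For the inductive step I would assume the claim for every $n$-tuple in $\mathbb{I}^{n}$ and treat $\boldsymbol{p}_{[n+1]}\prec\boldsymbol{q}_{[n+1]}$. By the definition of majorization the components are decreasing, and the first partial-sum inequality together with $p_{n+1}\ge q_{n+1}$ gives $q_1\ge p_1\ge\cdots\ge p_{n+1}\ge q_{n+1}$. In particular $q_{n+1}\le p_1$, so there is a smallest index $k\in\{1,\dots,n\}$ with $q_{k+1}\le p_1$; minimality forces $q_1\ge\cdots\ge q_k>p_1$, which is precisely hypothesis \eqref{(pi,qi)-rk} of Lemma \ref{L-p,qn*,p,q2'} together with the trigger condition $q_{k+1}\le p_1$. (Locating this $k$ is exactly what the ``Step 1, Step 2, \dots'' case analysis does in the proof of Theorem \ref{T1}.) Lemma \ref{L-p,qn*,p,q2'} then supplies $\boldsymbol{p}_{[n]}^{\ast}\prec\boldsymbol{q}_{[n]}^{\ast}$ and $\boldsymbol{p}_{[2]}^{\prime}\prec\boldsymbol{q}_{[2]}^{\prime}$, with entries as in \eqref{pj*,qj*}--\eqref{p2',q2'}. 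The one genuinely new entry $q_k^{\ast}=q_k+q_{k+1}-p_1$ satisfies $q_{k+1}\le q_k^{\ast}\le q_k$, hence lies in the interval $\mathbb{I}$, so $\phi$ is defined at every point appearing below.

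It then remains to record the bookkeeping identity obtained by summing \eqref{pj*,qj*} and cancelling the common terms $\sum_{j<k}\phi(q_j)+\sum_{j>k+1}\phi(q_j)$ and $\sum_{j\ge 2}\phi(p_j)$:
\[
\Big(\sum_{j=1}^{n+1}\phi(q_j)-\sum_{j=1}^{n+1}\phi(p_j)\Big)=\Big(\sum_{j=1}^{n}\phi(q_j^{\ast})-\sum_{j=1}^{n}\phi(p_j^{\ast})\Big)+\Big(\phi(q_k)+\phi(q_{k+1})-\phi(q_k^{\ast})-\phi(p_1)\Big).
\]
The first bracket on the right is nonnegative by the induction hypothesis applied to $\boldsymbol{p}_{[n]}^{\ast}\prec\boldsymbol{q}_{[n]}^{\ast}$. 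Since $\{p_1^{\prime},p_2^{\prime}\}=\{p_1,q_k^{\ast}\}$ and $\{q_1^{\prime},q_2^{\prime}\}=\{q_k,q_{k+1}\}$, the second bracket equals $[\phi(q_1^{\prime})+\phi(q_2^{\prime})]-[\phi(p_1^{\prime})+\phi(p_2^{\prime})]$, which is nonnegative by the base case applied to $\boldsymbol{p}_{[2]}^{\prime}\prec\boldsymbol{q}_{[2]}^{\prime}$. Hence the left-hand side is nonnegative, and the induction closes.

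The hard part will be purely combinatorial: verifying that the index ranges in \eqref{pj*,qj*} make exactly the terms $\phi(q_j)$ with $j\neq k,k+1$ and $\phi(p_j)$ with $j\neq 1$ cancel, so that a clean two-term remainder survives. Everything substantive---the preservation of majorization and the membership $q_k^{\ast}\in\mathbb{I}$---is already packaged in Lemma \ref{L-p,qn*,p,q2'}, and the only analytic input is the two-point convexity estimate of the base case; this is what makes Lemma \ref{L-p,qn*,p,q2'} the engine of the argument, just as advertised.
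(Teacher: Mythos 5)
Your proposal is correct and is essentially the paper's own proof: induction on $n$ with the two-point convexity estimate as base case, and the inductive step given by exactly the paper's decomposition $\Delta_{\boldsymbol{p}_{\left[ n+1\right] },\boldsymbol{q}_{\left[ n+1\right] }}=\Delta_{\boldsymbol{p}_{\left[ 2\right] }^{\prime },\boldsymbol{q}_{\left[ 2\right] }^{\prime }}+\Delta_{\boldsymbol{p}_{\left[ n\right] }^{\ast },\boldsymbol{q}_{\left[ n\right] }^{\ast }}$ powered by Lemma \ref{L-p,qn*,p,q2'}, your selection of the smallest $k$ with $q_{k+1}\leq p_{1}$ being just a compressed statement of the paper's Step~1--Step~$n$ cascade. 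The only (harmless) variations are your Jensen convex-combination form of the $n=2$ case in place of the paper's secant-slope inequality, and your explicit check that $q_{k}^{\ast }\in \mathbb{I}$, a point the paper leaves implicit.
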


\begin{proof}
It suffices to prove%
\begin{equation*}
\Delta _{\boldsymbol{p}_{\left[ n\right] },\boldsymbol{q}_{\left[ n\right]
}}=\sum_{j=1}^{n}\phi \left( p_{j}\right) -\sum_{j=1}^{n}\phi \left(
q_{j}\right) \leq 0
\end{equation*}%
by induction. When $n=2$, $\boldsymbol{p}_{\left[ 2\right] }\prec
\boldsymbol{q}_{\left[ 2\right] }$ means that $q_{1}\geq p_{1}\geq p_{2}\geq
q_{2}$ with $p_{1}+p_{2}=q_{1}+q_{2}$. By the property of convex functions,%
\begin{equation*}
\frac{\phi \left( p_{2}\right) -\phi \left( q_{2}\right) }{p_{2}-q_{2}}\leq
\frac{\phi \left( q_{1}\right) -\phi \left( p_{1}\right) }{q_{1}-p_{1}},
\end{equation*}
which indicates that $\Delta _{\boldsymbol{p}_{\left[ 2\right] },\boldsymbol{%
q}_{\left[ 2\right] }}\leq 0$.

Assume that $\Delta _{\boldsymbol{p}_{\left[ n\right] },\boldsymbol{q}_{%
\left[ n\right] }}\leq 0$ for certain $n\geq 2$. To prove $\Delta _{%
\boldsymbol{p}_{\left[ n+1\right] },\boldsymbol{q}_{\left[ n+1\right] }}\leq
0$, we write%
\begin{eqnarray*}
\Delta _{\boldsymbol{p}_{\left[ n+1\right] },\boldsymbol{q}_{\left[ n+1%
\right] }} &=&\phi \left( p_{1}\right) -\phi \left( q_{k}\right) -\phi
\left( q_{k+1}\right) +\phi \left( q_{k}+q_{k+1}-p_{1}\right) \\
&&+\sum_{j=2}^{n}\phi \left( p_{j}^{\ast }\right) -\sum_{j=1}^{n}\phi \left(
q_{j}^{\ast }\right) =\Delta _{\boldsymbol{p}_{\left[ 2\right] }^{\prime },%
\boldsymbol{q}_{\left[ 2\right] }^{\prime }}+\Delta _{\boldsymbol{p}_{\left[
n\right] }^{\ast },\boldsymbol{q}_{\left[ n\right] }^{\ast }},
\end{eqnarray*}%
where $\left( p_{j},q_{j}\right) $ and $\left( p_{j}^{\prime },q_{j}^{\prime
}\right) $ are defined by (\ref{pj*,qj*}) and (\ref{p2',q2'}), respectively.
Using Lemma \ref{L-p,qn*,p,q2'} and taking the same steps as the proof of
Theorem \ref{T1}, we can prove $\Delta _{\boldsymbol{p}_{\left[ n+1\right] },%
\boldsymbol{q}_{\left[ n+1\right] }}\leq 0$. By induction, $\Delta _{%
\boldsymbol{p}_{\left[ n\right] },\boldsymbol{q}_{\left[ n\right] }}\leq 0$
for all $n\geq 2$, which completes the proof.
\end{proof}

\section{Declarations}

\textbf{Funding}: No funding was received to assist with the preparation of
this manuscript.

\textbf{Conflicts of interest/Competing interests}: The author has no
relevant financial or non-financial interests to disclose.

\textbf{Use of AI tools} Declaration: The author has not used Artificial
Intelligence (AI) tools in the creation of this paper.

\medskip

\end{document}